%
%
\RequirePackage{snapshot}

%
%
%
\newif\ifsubsections
\subsectionstrue

%
%
%
\documentclass[]{pcmi}

%
%
%
%
\usepackage[sc]{mathpazo}          
\usepackage{eulervm}               
\usepackage[scaled=0.86]{berasans} 
\usepackage[scaled=1]{inconsolata} 
\usepackage[T1]{fontenc}

%
%
\usepackage[%
	protrusion=true,
	expansion=false,
	auto=false
	]{microtype}

%
%
%
%
%
%
\usepackage{xcolor}
\usepackage{graphicx}
\graphicspath{{./figures/}}

%
%
%
\ifdraft
	\definecolor{linkred}{rgb}{0.7,0.2,0.2}
	\definecolor{linkblue}{rgb}{0,0.2,0.6}
\else
	\definecolor{linkred}{rgb}{0.0,0.0,0.0}
	\definecolor{linkblue}{rgb}{0,0.0,0.0}
\fi

%
%


\usepackage{amsfonts, amssymb, amsmath, enumerate, amsthm}
\usepackage{subcaption}
\usepackage[font=small]{caption}
\usepackage[english]{babel}

\captionsetup[figure]{labelfont={rm}}  

%
%
%
\PassOptionsToPackage{hyphens}{url} 
\usepackage[
    setpagesize=false,
    pagebackref,
	pdfpagelabels=false,
    pdfstartview={FitH 1000},
    bookmarksnumbered=false,
    linktoc=all,
    colorlinks=true,
    anchorcolor=black,
    menucolor=black,
    runcolor=black,
    filecolor=black,
    linkcolor=linkblue,
	citecolor=linkblue,
	urlcolor=linkred,
]{hyperref}
\usepackage[backrefs,msc-links,nobysame]{amsrefs}

%
%
%
%
%
\customizeamsrefs 

%
%

%
%
%
%
%
%
%
%


\theoremstyle{plain}
\newtheorem{theorem}[equation]{Theorem}
\newtheorem{proposition}[equation]{Proposition}
\newtheorem{corollary}[equation]{Corollary}
\newtheorem{lemma}[equation]{Lemma}

\newtheorem{definition}[equation]{Definition}

\theoremstyle{remark}
\newtheorem{remark}[equation]{Remark}

\newtheorem{example}[equation]{Example}

%
%
%
%


%
%
%
%


\DeclareMathOperator{\E}{\mathbb{E}}
\DeclareMathOperator{\Var}{Var}

\DeclareMathOperator{\Ber}{Ber}

\DeclareMathOperator{\rad}{rad}
\DeclareMathOperator{\rank}{rank}
\DeclareMathOperator{\tr}{tr}

\renewcommand{\Pr}[2][]{\mathbb{P}_{#1} \left\{ #2 \rule{0mm}{3mm}\right\}}
\newcommand{\ip}[2]{\left\langle#1,#2\right\rangle}

\def \P {\mathbb{P}}
\def \R {\mathbb{R}}

\def \XX {\mathcal{X}}

\def \e {\varepsilon}
\def \d {\delta}
\def \l {\lambda}
\def \s {\sigma}

\def \tran {\mathsf{T}}

\def \psione {{\psi_1}}
\def \psitwo {{\psi_2}}

\long\def\replace#1{#1}

\replace{%

}

\begin{document}

\title{Four lectures on probabilistic methods for data science}
\author{Roman Vershynin}
\address{Department of Mathematics, University of Michigan, 530 Church Street, Ann Arbor, MI 48109, U.S.A.}
\email{romanv@umich.edu}
\date{\today}
\thanks{Partially supported by NSF Grant DMS 1265782 and U.S. Air Force Grant FA9550-14-1-0009.}

\begin{abstract}
 Methods of high-dimensional probability play a central role in applications for statistics, signal processing,
 theoretical computer science and related fields. These lectures present a sample of particularly useful tools
 of high-dimensional probability, focusing on the classical and matrix Bernstein's inequality and the 
 uniform matrix deviation inequality. We illustrate these tools with applications 
 for dimension reduction, network analysis, covariance estimation, matrix completion and sparse signal recovery.
 The lectures are geared towards beginning graduate students who have taken a rigorous course in probability 
 but may not have any experience in data science applications. 
\end{abstract}  

\maketitle

\tableofcontents

%
%


%
%
%
%

\section{Lecture 1: Concentration of sums of independent random variables}	

These lectures present a sample of modern methods of high dimensional probability
and illustrate these methods with applications in data science. This sample is not comprehensive
by any means, but it could serve as a point of entry into a branch of modern probability
that is motivated by a variety of data-related problems.

To get the most out of these lectures,
you should have taken a graduate course in probability,
have a good command of linear algebra (including the singular value decomposition)
and be familiar with very basic concepts of functional analysis (familiarity with $L^p$ norms 
should be enough).

All of the material of these lectures is covered more systematically, at a slower pace, 
and with a wider range of applications, in my forthcoming textbook \cite{V-textbook}. 
You may also be interested in two similar tutorials: \cite{V-RMT-tutorial} 
is focused on random matrices, and a more advanced text \cite{V-estimation-tutorial} 
discusses high-dimensional inference problems.

It should be possible to use these lectures for a self-study or group study. 
You will find here many places where you are invited to do some work (marked in the text e.g. by ``check this!''),
and you are encouraged to do it to get a better grasp of the material. 
Each lecture ends with a section called ``Notes'' where you will find bibliographic references of 
the results just discussed, as well asvarious improvements and extensions.  

We are now ready to start. 

\medskip

Probabilistic reasoning has a major impact on modern data science. 
There are roughly two ways in which this happens.  

\begin{itemize}
  \item {\em Radnomized algorithms}, which perform some operations at random, have long 
    been developed in computer science and remain very popular. 
    Randomized algorithms are among the most effective methods 
    -- and sometimes the only known ones -- for many data problems. 
  \item {\em Random models of data} form the usual premise of statistical analysis. 
    Even when the data at hand is deterministic, it is often helpful to think of it as a random sample 
    drawn from some unknown distribution (``population'').
\end{itemize}

In these lectures, we will encounter both randomized algorithms and random models of data.

\subsection{Sub-gaussian distributions}			\label{s: sub-gaussian}

Before we start discussing probabilistic methods, we will introduce an important class
of probability distributions that forms a natural ``habitat'' for random variables in many 
theoretical and applied problems. These are {\em sub-gaussian} distributions.
As the name suggests, we will be looking at an extension of the most 
fundamental distribution in probability theory -- the gaussian, or normal, distribution $N(\mu,\s)$.

It is a good exercise to check that the standard normal random variable $X \sim N(0,1)$ satisfies
the following basic properties: 

\begin{description}
  \item [Tails] $\Pr{|X| \ge t}  \le 2 \exp(-t^2/2)$ for all $t \ge 0$.
  \item [Moments] $\|X\|_p := (\E |X|^p)^{1/p} = O(\sqrt{p})$ as $p \to \infty$.
  \item [MGF of square]\footnote{MGF stands for moment generation function.} $\E \exp(cX^2) \le 2$ for some $c > 0$.
  \item [MGF] $\E \exp(\l X) = \exp(\l^2)$ for all $\l \in \R$.
\end{description}

All these properties tell the same story from four different perspectives. 
It is not very difficult to show (although we will not do it here) 
that for any random variable $X$, not necessarily Gaussian, these four properties are essentially equivalent. 

\begin{proposition}[Sub-gaussian properties]		\label{prop: sub-gaussian}
  For a random variable $X$, the following properties are equivalent.\footnote{The parameters
  $K_i > 0$ appearing in these properties can be different. However, they may differ from each other 
  by at most an absolute constant factor. 
  This means that there exists an absolute constant $C$ such that property $1$
    implies property $2$ with parameter $K_2 \le C K_1$, and similarly for every other pair or properties.}
  \begin{description}
    \item [Tails] $\Pr{ |X| \ge t } \le 2 \exp(-t^2/K_1^2)$ for all $t \ge 0$.
    \item [Moments] $\|X\|_p \le K_2 \sqrt{p}$ for all $p \ge 1$.
    \item [MGF of square] $\E \exp(X^2/K_3^2) \le 2$.
  \end{description}
  Moreover, if $\E X = 0$ then these properties are also equivalent to the following one:
  \begin{description} 
    \item [MGF] $\E \exp(\l X) \le \exp(\l^2 K_4^2)$ for all $\l \in \R$.
  \end{description}
\end{proposition}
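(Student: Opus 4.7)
The plan is to prove the equivalence of the first three properties by a cyclic chain of implications (1)~$\Rightarrow$~(2)~$\Rightarrow$~(3)~$\Rightarrow$~(1), and then insert (4) into the cycle by showing (3)~$\Rightarrow$~(4)~$\Rightarrow$~(1) under the extra hypothesis $\E X = 0$. Throughout, I would track only the dependence of each new constant on the previous one, since the proposition allows the $K_i$ to differ by absolute constant factors.

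For (1)~$\Rightarrow$~(2) I would use the standard integration-by-parts identity $\E |X|^p = \int_0^\infty p t^{p-1}\Pr{|X|\ge t}\,dt$, plug in the sub-gaussian tail, substitute $u = t^2/K_1^2$ to reduce the integral to a Gamma function, and then apply Stirling to $\Gamma(p/2)^{1/p} \lesssim \sqrt{p}$. For (2)~$\Rightarrow$~(3), I would Taylor-expand the exponential term by term:
\[
\E \exp(X^2/K_3^2) = 1 + \sum_{k\ge 1} \frac{\E X^{2k}}{k!\, K_3^{2k}},
\]
bound $\E X^{2k} \le (K_2 \sqrt{2k})^{2k}$ using the moment hypothesis, and apply $k! \ge (k/e)^k$ to reduce everything to a geometric series in $(2eK_2^2/K_3^2)^k$, which is at most $1$ once $K_3$ is a sufficiently large multiple of $K_2$. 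For (3)~$\Rightarrow$~(1), the Markov/Chernoff step is immediate: $\Pr{|X|\ge t} = \Pr{\exp(X^2/K_3^2) \ge \exp(t^2/K_3^2)} \le 2\exp(-t^2/K_3^2)$.

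To incorporate (4) I would close the loop through (3)~$\Rightarrow$~(4)~$\Rightarrow$~(1). The implication (4)~$\Rightarrow$~(1) is another Chernoff bound: $\Pr{X\ge t}\le \exp(-\lambda t)\E\exp(\lambda X)\le \exp(-\lambda t + \lambda^2 K_4^2)$, optimized at $\lambda = t/(2K_4^2)$, and the same argument for $-X$ gives the two-sided tail. The step (3)~$\Rightarrow$~(4) is where $\E X = 0$ enters; here the cleanest route is to split into two regimes. For small $|\lambda|$ I would use the Taylor expansion of $\exp(\lambda X)$ together with $\E X = 0$ to cancel the linear term, and then control the remainder using the moment bound (which is equivalent to~(3)). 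For large $|\lambda|$, the centering hypothesis is no longer needed and the elementary inequality $\lambda x \le \lambda^2 K_3^2/4 + x^2/K_3^2$ (from AM-GM) combined with (3) gives $\E\exp(\lambda X)\le 2\exp(\lambda^2 K_3^2/4)$, which is already of the desired form once $|\lambda| K_3 \ge 1$ because then the factor of $2$ is absorbed into $\exp(c\lambda^2 K_3^2)$.

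The main obstacle is the step~(3)~$\Rightarrow$~(4), because a naive application of AM-GM leaves an unwanted factor of $2$ in front of $\exp(\lambda^2 K_4^2)$, which is unacceptable in the limit $\lambda\to 0$ (the MGF must equal $1$ at $\lambda = 0$). Handling the small-$\lambda$ regime is precisely where one must use $\E X = 0$ to eliminate the first-order term; this is also why the hypothesis $\E X = 0$ cannot be dropped from this part of the proposition. Everything else is bookkeeping of constants, which by the proposition's footnote we are free to rename.
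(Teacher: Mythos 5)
Your proposal is a correct and standard proof of this proposition; the paper itself omits the proof and points the reader to \cite{V-RMT-tutorial} and \cite{V-textbook}, where exactly this cyclic chain of implications appears. One small bookkeeping point in the step $(3)\Rightarrow(4)$: you split the two regimes at $|\lambda|K_3 = 1$, but the Taylor-series estimate
\[
\E e^{\lambda X} - 1 = \sum_{k\ge 2}\frac{\lambda^k \E X^k}{k!}
\le \sum_{k\ge 2}\Big(\frac{|\lambda|\,C K_3\, e}{\sqrt{k}}\Big)^{k}
\]
only gives a convergent geometric series (and hence a bound of the form $1+C'\lambda^2 K_3^2\le \exp(C'\lambda^2 K_3^2)$) once $|\lambda|K_3\le c_0$ for a small absolute constant $c_0$ that may be less than $1$, because of the $Ce$ factor hiding in the moment comparison. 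The fix is trivial — split at $|\lambda|K_3 = c_0$ instead; the AM-GM branch $\E e^{\lambda X}\le 2\exp(\lambda^2K_3^2/4)$ works for all $\lambda$ and absorbs the factor of $2$ into $\exp(C\lambda^2 K_3^2)$ as soon as $|\lambda|K_3\ge c_0$, since then $2 = e^{\log 2}\le e^{(\log 2/c_0^2)\lambda^2 K_3^2}$. Everything else — the Gamma-function computation in $(1)\Rightarrow(2)$, the series-plus-$k!\ge(k/e)^k$ bound in $(2)\Rightarrow(3)$, the two Chernoff steps, and the observation that $\E X=0$ is needed precisely to kill the linear term of the MGF near $\lambda=0$ — is exactly right.
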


Random variables that satisfy one of the first three properties (and thus all of them) 
are called {\em sub-gaussian}. The best $K_3$ is called the {\em sub-gaussian norm} of $X$, 
and is usually denoted $\|X\|_\psitwo$, that is
$$
\|X\|_\psitwo := \inf \left\{ t>0 :\; \E \exp(X^2/t^2) \le 2 \right\}.
$$
One can check that $\|\cdot\|_\psitwo$ indeed defines a norm; it is an example of the general concept 
of the {\em Orlicz norm}. 
Proposition~\ref{prop: sub-gaussian} states that the numbers $K_i$ in all four properties 
are equivalent to $\|X\|_\psitwo$ up to absolute constant factors. 

\begin{example}
  As we already noted, the standard normal random variable $X \sim N(0,1)$ is sub-gaussian. 
  Similarly, arbitrary {\em normal} random variables $X \sim N(\mu, \s)$ are sub-gaussian. 
  Another example is a {\em Bernoulli} random variable $X$ that takes values $0$ and $1$ 
  with probabilities $1/2$ each. More generally, any {\em bounded} random variable $X$
  is sub-gaussian. On the contrary, Poisson, exponential, Pareto and Cauchy distributions
  are not sub-gaussian. (Verify all these claims; this is not difficult.)
\end{example}

\subsection{Hoeffding's inequality}

You may remember from a basic course in probability that the normal distribution $N(\mu,\s)$ 
has a remarkable property: the sum of independent 
normal random variables is also normal.
Here is a version of this property for sub-gaussian distributions.

\begin{proposition}[Sums of sub-gaussians] \label{prop: sum of sub-gaussians}
  Let $X_1,\ldots,X_N$ be independent, mean zero, sub-gaussian random variables.
  Then $\sum_{i=1}^N X_i$ is a sub-gaussian, and
  $$
  \Big\| \sum_{i=1}^N X_i \Big\|_\psitwo^2 
  \le C \sum_{i=1}^N \|X_i\|_\psitwo^2
  $$
  where $C$ is an absolute constant.\footnote{In the future, we will always denote positive absolute constants 
    by $C$, $c$, $C_1$, etc. These numbers do not depend on anything. In most cases, one can get 
    good bounds on these constants from the proof, but the optimal constants for each result are rarely known.} 
\end{proposition}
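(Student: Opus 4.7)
My plan is to reduce the problem to manipulating moment generating functions, which multiply over independent variables and therefore play well with sums.

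First I would invoke the equivalence in Proposition~\ref{prop: sub-gaussian} in one direction: since each $X_i$ is mean zero and sub-gaussian, the MGF bound gives constants $K_i$ with $\E \exp(\lambda X_i) \le \exp(\lambda^2 K_i^2)$ for all $\lambda \in \R$, where $K_i \le C_1 \|X_i\|_\psitwo$ for an absolute constant $C_1$. Let $S := \sum_{i=1}^N X_i$. By independence,
$$
\E \exp(\lambda S) = \prod_{i=1}^N \E \exp(\lambda X_i) \le \prod_{i=1}^N \exp(\lambda^2 K_i^2) = \exp\!\bigl(\lambda^2 K^2\bigr),
\quad \text{where } K^2 := \sum_{i=1}^N K_i^2.
$$
So $S$ satisfies the MGF property with parameter $K$, and moreover $\E S = 0$.

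Next I would apply the reverse direction of the equivalence in Proposition~\ref{prop: sub-gaussian}: the MGF bound on a mean-zero random variable implies the MGF-of-square bound (and hence sub-gaussianity) with sub-gaussian norm at most $C_2 K$ for an absolute constant $C_2$. Concretely, $\|S\|_\psitwo \le C_2 K$, so squaring yields
$$
\|S\|_\psitwo^2 \;\le\; C_2^2 \sum_{i=1}^N K_i^2 \;\le\; C_2^2 C_1^2 \sum_{i=1}^N \|X_i\|_\psitwo^2,
$$
which is the claimed inequality with $C = C_1^2 C_2^2$.

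The conceptually simple part is the MGF computation under independence; the only real work is the chain of implications in Proposition~\ref{prop: sub-gaussian}, which I am allowed to use as a black box. If I had to prove that proposition, the main obstacle would be the mean-zero centering needed to deduce the MGF bound from the MGF-of-square bound (one expands $\exp(\lambda X)$ as a power series, splits the linear term using $\E X = 0$, and bounds the remaining moments via the moment property). But for the present statement, the real subtlety is merely tracking that the constants in Proposition~\ref{prop: sub-gaussian} are \emph{absolute}, so that the final constant $C$ does not depend on $N$ or on the individual variables.
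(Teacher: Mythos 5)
Your proposal is correct and takes essentially the same route as the paper: bound $\E\exp(\lambda S)$ by independence and the MGF characterization from Proposition~\ref{prop: sub-gaussian}, then convert the resulting bound $\exp(\lambda^2 K^2)$ back into a bound on $\|S\|_\psitwo$ via the same equivalence. You track the constants $C_1, C_2$ a bit more explicitly than the paper, which simply absorbs them into one $C$ inside the display, but the argument is the same.
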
 

\begin{proof}
Let us bound the moment generating function of the sum for any $\l \in \R$:
\begin{align*}
\E \exp \big( \l \sum_{i=1}^N X_i \big)
&= \prod_{i=1}^N \E \exp(\l X_i)		\quad \text{(using independence)}  \\
&\le \prod_{i=1}^N \exp(C \l^2 \|X_i\|_\psitwo^2)	
	\quad \text{(by last property in Proposition~\ref{prop: sub-gaussian})}\\
&= \exp(\l^2 K^2) \quad \text{where } K^2 := C \sum_{i=1}^N \|X_i\|_\psitwo^2.
\end{align*}
Using again the last property in Proposition~\ref{prop: sub-gaussian}, we conclude that the sum
$S = \sum_{i=1}^N X_i$ is sub-gaussian, and $\|S\|_\psitwo \le C_1 K$
where $C_1$ is an absolute constant. 
The proof is complete.
\end{proof}

Let us rewrite Proposition~\ref{prop: sum of sub-gaussians} in a form 
that is often more useful in applications, namely as a {\em concentration inequality}. To do this, 
we simply use the first property in Proposition~\ref{prop: sub-gaussian} for the sum $\sum_{i=1}^N X_i$.
We immediately get the following. 

\begin{theorem}[General Hoeffding's inequality]		  	\label{thm: Hoeffding}
  Let $X_1,\ldots,X_N$ be independent, mean zero, sub-gaussian random variables.
  Then, for every $t \ge 0$ we have
  $$
  \P \Big\{ \Big| \sum_{i=1}^N X_i \Big| \ge t \Big\} 
  \le 2 \exp \Big( -\frac{ct^2}{\sum_{i=1}^N \|X_i\|_\psitwo^2} \Big).
  $$   
\end{theorem}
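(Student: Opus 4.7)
The plan is simply to chain the two statements already proved just above. All the real work has been done in Proposition~\ref{prop: sum of sub-gaussians} and Proposition~\ref{prop: sub-gaussian}; the theorem is only a repackaging of those facts into a tail bound, exactly as the paragraph preceding the theorem advertises.

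First, I would set $S := \sum_{i=1}^N X_i$ and apply Proposition~\ref{prop: sum of sub-gaussians} to conclude that $S$ is sub-gaussian with
\[
\|S\|_\psitwo^2 \le C_0 \sum_{i=1}^N \|X_i\|_\psitwo^2
\]
for an absolute constant $C_0$. (The mean-zero hypothesis was needed precisely so that Proposition~\ref{prop: sum of sub-gaussians} applies.) Next, I would invoke the ``Tails'' characterization in Proposition~\ref{prop: sub-gaussian} applied to the single random variable $S$: any sub-gaussian $Y$ satisfies $\P\{|Y| \ge t\} \le 2\exp(-c_0 t^2/\|Y\|_\psitwo^2)$ for some absolute constant $c_0 > 0$, since the parameter $K_1$ in the Tails property is equivalent to $\|Y\|_\psitwo = K_3$ up to absolute constants.

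Combining these two inputs gives
\[
\P\{|S| \ge t\} \le 2 \exp\!\Big(-\frac{c_0 t^2}{\|S\|_\psitwo^2}\Big) \le 2 \exp\!\Big(-\frac{c_0 t^2}{C_0 \sum_{i=1}^N \|X_i\|_\psitwo^2}\Big),
\]
and setting $c := c_0/C_0$ yields the claimed inequality.

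There is no genuine obstacle in this argument; the only subtlety worth flagging is the routine bookkeeping of absolute constants, namely that the equivalence between the $K_i$ in Proposition~\ref{prop: sub-gaussian} lets me pass freely from a bound on $\|S\|_\psitwo$ (coming out of Proposition~\ref{prop: sum of sub-gaussians}) to a tail bound (the first property of Proposition~\ref{prop: sub-gaussian}) at the cost of redefining the constant. Everything else is a one-line substitution.
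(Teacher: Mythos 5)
Your proposal is correct and is precisely the argument the paper intends: apply Proposition~\ref{prop: sum of sub-gaussians} to bound $\|S\|_{\psitwo}^2$, then feed that into the tail characterization of Proposition~\ref{prop: sub-gaussian}. The paper presents this step in one sentence rather than spelling out the constant bookkeeping, but the route is identical.
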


Hoeffding's inequality controls how far and with what probability
a sum of independent random variables can deviate from its mean, which is zero.

\subsection{Sub-exponential distributions}

Sub-gaussian distributions form a sufficiently wide class of distributions. 
Many results in probability and data science are proved nowadays 
for sub-gaussian random variables. Still, as we noted, 
there are some natural random variables that are not sub-gaussian. 
For example, the square $X^2$ of a normal random variable $X \sim N(0,1)$ is not sub-gaussian. (Check!)
To cover examples like this, we will introduce the similar but weaker notion of {\em sub-exponential distributions}. 

\begin{proposition}[Sub-exponential properties]		\label{prop: sub-exponential}
  For a random variable $X$, the following properties are equivalent, in the same sense as 
  in Proposition~\ref{prop: sub-gaussian}.
  \begin{description}
    \item [Tails] $\Pr{ |X| \ge t } \le 2 \exp(-t/K_1)$ for all $t \ge 0$.
    \item [Moments] $\|X\|_p \le K_2 p$ for all $p \ge 1$.
    \item [MGF of the square] $\E \exp(|X|/K_3) \le 2$.
  \end{description}
  Moreover, if $\E X = 0$ then these properties imply the following one:
  \begin{description} 
    \item [MGF] $\E \exp(\l X) \le \exp(\l^2 K_4^2)$ for $|\l| \le 1/K_4$.
  \end{description}
\end{proposition}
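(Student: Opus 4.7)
The plan is to mimic the structure of the sub-gaussian proposition: establish the cyclic chain $\text{Tails} \Rightarrow \text{Moments} \Rightarrow \text{MGF of }|X| \Rightarrow \text{Tails}$ among the first three conditions, and then derive the MGF bound for centered $X$ as a separate Taylor-expansion argument. The overall template is the same as before, but every appearance of $p$ or $t^2$ in the sub-gaussian proof gets replaced by an honest linear $p$ or $t$, which is exactly what degrades the tail from $e^{-t^2}$ to $e^{-t}$.

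For \textbf{Tails $\Rightarrow$ Moments}, I would use the layer-cake representation
\[
  \E |X|^p = \int_0^\infty p t^{p-1} \Pr{|X|\ge t}\, dt
  \le 2p \int_0^\infty t^{p-1} e^{-t/K_1}\,dt
  = 2 p K_1^p \Gamma(p),
\]
and then $\Gamma(p) \le p^p$ plus taking $p$-th roots gives $\|X\|_p \le C K_1\, p$. For \textbf{Moments $\Rightarrow$ MGF of }$|X|$, expand the exponential and use $\E|X|^p \le (K_2 p)^p$:
\[
  \E \exp(|X|/K)
  = 1 + \sum_{p\ge 1} \frac{\E|X|^p}{K^p\, p!}
  \le 1 + \sum_{p\ge 1} \frac{(K_2 p)^p}{K^p\, p!}
  \le 1 + \sum_{p\ge 1} \Big(\frac{e K_2}{K}\Big)^p,
\]
where I used Stirling in the form $p! \ge (p/e)^p$. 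Choosing $K$ a sufficiently large multiple of $K_2$ makes this geometric series at most $1$, yielding the MGF bound with $K_3 \lesssim K_2$. The direction \textbf{MGF of }$|X| \Rightarrow$ \textbf{Tails} is one line of Markov/Chernoff: $\Pr{|X|\ge t} \le e^{-t/K_3}\,\E\exp(|X|/K_3) \le 2 e^{-t/K_3}$.

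For the last implication — the centered MGF bound — I would again Taylor-expand and exploit $\E X = 0$ to kill the first-order term:
\[
  \E \exp(\l X) = 1 + \l \E X + \sum_{p\ge 2} \frac{\l^p\, \E X^p}{p!}
  \le 1 + \sum_{p\ge 2} \frac{|\l|^p (K_2 p)^p}{p!}
  \le 1 + \sum_{p\ge 2} (e K_2 |\l|)^p.
\]
For $|\l| \le 1/(2 e K_2)$ the tail of this geometric series is controlled by its first term, giving
\[
  \E\exp(\l X) \le 1 + 2 (e K_2 \l)^2 \le \exp\!\big( C \l^2 K_2^2 \big),
\]
by $1+u\le e^u$. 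Setting $K_4$ a suitable multiple of $K_2$ absorbs everything into the advertised form $\E\exp(\l X) \le \exp(\l^2 K_4^2)$ valid for $|\l|\le 1/K_4$.

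The one genuinely delicate point — and the place where sub-exponential behavior differs qualitatively from sub-gaussian — is that the series $\sum (e K_2 |\l|)^p$ diverges once $|\l| \ge 1/(e K_2)$, so the MGF bound is intrinsically restricted to a bounded range of $\l$; this is why the statement cannot claim the inequality for all $\l \in \R$ and why the threshold $|\l|\le 1/K_4$ must appear. Everything else is a mechanical translation of the sub-gaussian proof with the substitution $\sqrt{p} \leadsto p$.
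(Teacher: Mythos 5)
Your proof is correct; the paper does not actually prove Proposition 2.6 (it defers the ``basic'' material of Sections 1.1--1.5 to the references), but your chain Tails $\Rightarrow$ Moments $\Rightarrow$ MGF of $|X|$ $\Rightarrow$ Tails via the layer-cake identity, Stirling's bound $p! \ge (p/e)^p$, and Markov's inequality, together with the centered Taylor-expansion argument for the restricted MGF bound, is precisely the standard argument used in the cited sources. One cosmetic remark: in the final step you should pass from $\sum_{p\ge 2}\l^p\,\E X^p/p!$ to the majorant via $|\E X^p|\le \E|X|^p$ before invoking the moment bound, and it is worth checking (as you implicitly do) that the admissible range $|\l|\le 1/(2eK_2)$ is at least as large as $1/K_4$ once $K_4$ is fixed to be a constant multiple of $K_2$ -- taking $K_4 = 2eK_2$ makes both constraints hold simultaneously.
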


Just like we did for sub-gaussian distributions, we call the best $K_3$ the {\em sub-exponential norm} of $X$ 
and denote it by $\|X\|_\psione$, that is
$$
\|X\|_\psione := \inf \left\{ t>0 :\; \E \exp(|X|/t) \le 2 \right\}.
$$

All sub-exponential random variables are squares of sub-gaussian random variables. 
Indeed, inspecting the definitions you will quickly see that
\begin{equation}         \label{eq: psione psitwo}
\|X^2\|_\psione = \|X\|_\psitwo^2.
\end{equation}
(Check!)

\subsection{Bernstein's inequality}

A version of Hoeffding's inequality for sub-exponential random variables is called Bernstein's inequality. 
You may naturally expect to see a sub-exponential tail bound in this result. 
So it may come as a surprise that Bernstein's inequality actually has a mixture of {\em two} tails 
-- sub-gaussian and sub-exponential. 
Let us state and prove the inequality first, and then we will comment on the mixture of the two tails.

\begin{theorem}[Bernstein's inequality]	\label{thm: Bernstein}
  Let $X_1,\ldots,X_N$ be independent, mean zero, sub-exponential random variables.
  Then, for every $t \ge 0$ we have
  $$
  \P \Big\{ \Big| \sum_{i=1}^N X_i \Big| \ge t \Big\} 
  \le 2 \exp \Big[ -c \min \Big( \frac{t^2}{\sum_{i=1}^N \|X_i\|_\psione^2}, \; \frac{t}{\max_i \|X_i\|_\psione} \Big) \Big].
  $$
\end{theorem}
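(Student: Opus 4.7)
The plan is to run a Chernoff-style MGF argument, where the interesting feature is that the MGF bound for sub-exponential variables only holds for $\lambda$ in a bounded interval around zero, and this constraint is precisely what forces the mixture of sub-gaussian and sub-exponential tails in the conclusion.

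First, I would set $K_i := \|X_i\|_{\psi_1}$ (up to the absolute constant appearing in the MGF characterization) and bound only the upper tail $\P\{\sum X_i \ge t\}$; the lower tail follows by applying the same argument to $-X_i$, and combining costs a factor of $2$. For $\lambda > 0$, Markov's inequality gives
\[
\P\Big\{\sum_{i=1}^N X_i \ge t\Big\} \le e^{-\lambda t}\, \E \exp\Big(\lambda \sum_{i=1}^N X_i\Big) = e^{-\lambda t} \prod_{i=1}^N \E \exp(\lambda X_i),
\]
using independence. To control each factor I invoke the MGF property in Proposition~\ref{prop: sub-exponential}: since $\E X_i = 0$, we have $\E \exp(\lambda X_i) \le \exp(C \lambda^2 K_i^2)$ provided $|\lambda| \le c/K_i$.

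The next step is to impose the uniform constraint $|\lambda| \le c/\max_i K_i$, which makes the MGF bound valid for every $X_i$ simultaneously. Multiplying yields
\[
\P\Big\{\sum_{i=1}^N X_i \ge t\Big\} \le \exp\!\Big(-\lambda t + C \lambda^2 \sigma^2\Big), \qquad \sigma^2 := \sum_{i=1}^N K_i^2,
\]
valid on the interval $0 \le \lambda \le c/\max_i K_i$. Now I optimize over $\lambda$. The unconstrained quadratic minimizer is $\lambda^* = t/(2C\sigma^2)$.

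The main (and only real) obstacle is the case split that the constraint forces. If $\lambda^*$ lies inside the admissible interval, i.e.\ $t/\sigma^2 \lesssim 1/\max_i K_i$, then substituting produces the sub-gaussian bound $\exp(-c t^2/\sigma^2)$. Otherwise, the exponent $-\lambda t + C\lambda^2 \sigma^2$ is decreasing on the admissible interval, so its minimum is attained at the endpoint $\lambda = c/\max_i K_i$; plugging this in and using that $\lambda \sigma^2 \le t/(2C)$ in this regime, the quadratic term is dominated by the linear term, yielding the sub-exponential bound $\exp(-c t/\max_i K_i)$. Writing both bounds with a single $\min$ in the exponent, and doubling for the two-sided tail, gives exactly the stated inequality. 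The minor technical point to verify carefully is that the constants from Proposition~\ref{prop: sub-exponential} can be absorbed into the single absolute constant $c$ in the final bound.
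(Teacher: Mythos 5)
Your proposal is correct and follows essentially the same route as the paper: a Chernoff bound via the MGF, using independence to factor, the sub-exponential MGF property valid only on a bounded $\lambda$-interval, and then optimizing the resulting quadratic exponent subject to the constraint $0 \le \lambda \le c/\max_i K_i$. The paper leaves the final optimization step as an exercise; you have supplied exactly the intended case analysis (interior minimizer gives the sub-gaussian tail, endpoint evaluation plus $\lambda_0 \sigma^2 \lesssim t$ gives the sub-exponential tail), so the only thing to add is the explicit bookkeeping absorbing the constants from Proposition~\ref{prop: sub-exponential} into the final $c$, which is routine.
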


\begin{proof}
For simplicity, we will assume that $K=1$ and only prove the one-sided bound (without absolute value);
the general case is not much harder.
Our approach will be based on bounding the {\em moment generating function} of the sum $S := \sum_{i=1}^N X_i$. 
To see how MGF can be helpful here, choose $\l \ge 0$ and use Markov's inequality to get
\begin{equation}         \label{eq: tail begin}
\Pr{ S \ge t } 
= \Pr{ \exp(\l S) \ge \exp(\l t) }
\le e^{-\l t} \E \exp(\l S).
\end{equation}
Recall that $S= \sum_{i=1}^N X_i$ and use independence to express the right side of \eqref{eq: tail begin} as
$$
e^{-\l t} \prod_{i=1}^N \E \exp(\l X_i).
$$
(Check!)
It remains to bound the MGF of each term $X_i$, and this is a much simpler task. 
If we choose $\l$ small enough so that
\begin{equation}         \label{eq: lambda small}
0 < \l \le \frac{c}{\max_i \|X_i\|_\psione},
\end{equation}
then we can use the last property in Proposition~\ref{prop: sub-exponential} to get
$$
\E \exp(\l X_i) \le \exp \big( C \l^2 \|X_i\|_\psione^2 \big).
$$
Substitute into \eqref{eq: tail begin} and conclude that
$$
\P \{ S \ge t \}  
\le \exp \left( -\l t + C \l^2 \s^2 \right)
$$
where $\s^2 = \sum_{i=1}^N \|X_i\|_\psione^2$.
The left side does not depend on $\l$ while the right side does. 
So we can choose $\l$ that minimizes the right side subject to the constraint \eqref{eq: lambda small}.
When this is done carefully, we obtain the tail bound stated in Bernstein's inequality. (Do this!)
\end{proof}

Now, why does Bernstein's inequality have a mixture of two tails? 
The sub-exponential tail should of course be there. Indeed, even if the entire sum consisted of a single term $X_i$, 
the best bound we could hope for would be of the form $\exp(-ct/\|X_i\|_\psione)$. 
The sub-gaussian term could be explained by the central limit theorem, which states
that the sum should becomes approximately {\em normal} as the number of terms $N$ increases to infinity.

\begin{remark}[Bernstein's inequality for bounded random variables]
  Suppose the random variables $X_i$ are {\em uniformly bounded}, 
  which is a stronger assumption than being sub-gaussian. 
  Then there is a useful version of Bernstein's inequality, which unlike Theorem~\ref{thm: Bernstein}
  is sensitive to the {\em variances} of $X_i$'s. It states that if $K>0$ is such that 
  $|X_i| \le K$ almost surely for all $i$, then, for every $t \ge 0$, we have
  \begin{equation}         \label{eq: Bernstein bounded}
  \P \Big\{ \Big| \sum_{i=1}^N X_i \Big| \ge t \Big\} 
  \le 2 \exp \Big( -\frac{t^2/2}{\s^2 + C K t} \Big).
  \end{equation}
  Here $\s^2 = \sum_{i=1}^N \E X_i^2$ is the variance of the sum. 
  This version of Bernstein's inequality can be proved in essentially the same way as Theorem~\ref{thm: Bernstein}.
  We will not do it here, but a stronger Theorem~\ref{thm: matrix Bernstein}, 
  which is valid for matrix-valued random variables $X_i$, 
  will be proved in Lecture~\ref{s: Lecture 2}.
  
  To compare this with Theorem~\ref{thm: Bernstein}, note that $\s^2 + CKt \le 2\max(\s^2, CKt)$. 
  So we can state the probability bound \eqref{eq: Bernstein bounded} as 
  $$
  2 \exp \Big[ -c \min \Big( \frac{t^2}{\s^2}, \; \frac{t}{K} \Big) \Big].
  $$
  Just like before, here we also have a mixture 
  of two tails, sub-gaussian and sub-exponential. The sub-gaussian tail is a bit sharper than in 
  Theorem~\ref{thm: Bernstein}, since it depends on the {\em variances} rather than sub-gaussian norms
  of $X_i$. The sub-exponential tail, on the other hand, is weaker, since it depends on the sup-norms
  rather than the sub-exponential norms of $X_i$.
\end{remark}

\subsection{Sub-gaussian random vectors}		\label{s: sub-gaussian vectors}

The concept of sub-gaussian distributions can be extended to higher dimensions. 
Consider a random vector $X$ taking values in $\R^n$. 
We call $X$ a {\em sub-gaussian random vector} 
if all one-dimensional {\em marginals} of $X$, i.e., the random variables $\ip{X}{x}$ for $x \in \R^n$,
are sub-gaussian. The sub-gaussian norm of $X$
is defined as
$$
\|X\|_\psitwo := \sup_{x \in S^{n-1}} \|\ip{X}{x} \|_\psitwo
$$
where $S^{n-1}$ denotes the unit Euclidean sphere in $\R^n$.

\begin{example}
  Examples of sub-gaussian random distributions in $\R^n$ include the standard normal distribution 
  $N(0,I_n)$ (why?), the uniform distribution on the centered Euclidean sphere
  of radius $\sqrt{n}$, the uniform distribution on the cube $\{-1,1\}^n$, and many others. 
  The last example can be generalized: a random vector $X = (X_1,\ldots,X_n)$ with independent 
  and sub-gaussian coordinates is sub-gaussian, with $\|X\|_\psitwo \le C \max_i \|X_i\|_\psitwo$.   
\end{example}

\subsection{Johnson-Lindenstrauss Lemma}		\label{s: JL}

Concentration inequalities like Hoeffding's and Bernstein's are successfully used in the analysis
of algorithms. Let us give one example for the problem of {\em dimension reduction}.
Suppose we have some data that is represented as a set of $N$ points in $\R^n$. 
(Think, for example, of $n$ gene expressions of $N$ patients.) 

We would like to compress the data by representing it in a lower dimensional space 
$\R^m$ instead of $\R^n$ with $m \ll n$. 
By how much can we reduce the dimension without loosing the important 
features of the data? 

The basic result in this direction is the Johnson-Lindenstrauss Lemma.
It states that a remarkably simple dimension reduction method works -- a {\em random linear map} 
from $\R^n$ to $\R^m$ with
$$
m \sim \log N,
$$
see Figure~\ref{fig: JL}.
The logarithmic function grows very slowly, so we can usually 
reduce the dimension dramatically.

What exactly is a random linear map? Several models are possible to use. 
Here we will model such a map using a {\em Gaussian random matrix} -- an $m \times n$  matrix $A$ with 
independent $N(0,1)$ entries. 
More generally, we can consider an $m \times n$ matrix $A$ whose rows are 
independent, mean zero, isotropic\footnote{A random vector $X \in \R^n$ is called {\em isotropic} if $\E X X^\tran = I_n$.}
and sub-gaussian random vectors in $\R^n$. For example, the entries of $A$ can be independent 
{\em Rademacher} entries -- those taking values $\pm 1$ with equal probabilities.

\begin{theorem}[Johnson-Lindenstrauss Lemma]			\label{thm: JL}
  Let $\XX$ be a set of $N$ points in $\R^n$ and $\e \in (0,1)$.
  Consider an $m \times n$ matrix $A$ whose rows are independent, mean zero, 
  isotropic and sub-gaussian random vectors in $\R^n$. 
  Rescale $A$ by defining the ``Gaussian random projection''\footnote{Strictly speaking, this $P$ is not 
    a projection since it maps $\R^n$ to a different space $\R^m$.}
  $$
  P := \frac{1}{\sqrt{m}} A. 
  $$
  Assume that 
  $$
  m \ge C \e^{-2} \log N,
  $$
  where $C$ is an appropriately large constant that depends only on the sub-gaussian norms of the vectors $X_i$.
  Then, with high probability (say, $0.99$), 
  the map $P$ preserves the distances between all points in $\XX$ with error $\e$, 
  that is 
  \begin{equation}         \label{eq: JL}
  (1-\e) \|x-y\|_2 \le \|Px - Py\|_2 \le (1+\e) \|x-y\|_2 \quad \text{for all } x,y \in \XX.
  \end{equation}
\end{theorem}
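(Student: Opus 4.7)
The plan is to reduce the statement to a concentration bound for $\|Pz\|_2^2$ around $\|z\|_2^2$ for a single fixed vector $z$, then union bound over the differences $z = x-y$ for all pairs $x,y \in \XX$.

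First I would fix $z \in \R^n$ and write
$$
\|Pz\|_2^2 = \frac{1}{m}\sum_{i=1}^m \ip{A_i}{z}^2,
$$
where $A_1, \dots, A_m$ are the rows of $A$. Isotropy of $A_i$ gives $\E \ip{A_i}{z}^2 = z^\tran (\E A_i A_i^\tran) z = \|z\|_2^2$, so the right-hand side has mean exactly $\|z\|_2^2$. Each $\ip{A_i}{z}$ is sub-gaussian with norm at most $K\|z\|_2$ (where $K$ bounds $\|A_i\|_\psitwo$), so by the identity $\|X^2\|_\psione = \|X\|_\psitwo^2$, the random variables $Y_i := \ip{A_i}{z}^2 - \|z\|_2^2$ are independent, mean zero and sub-exponential with $\|Y_i\|_\psione \le C K^2 \|z\|_2^2$.

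Next I would apply Bernstein's inequality (Theorem~\ref{thm: Bernstein}) to the sum $\sum_{i=1}^m Y_i$, taking the deviation level to be $\e m \|z\|_2^2$. Since $\e \in (0,1)$, the sub-gaussian term dominates in the $\min$, and this yields
$$
\P\Big\{ \big| \|Pz\|_2^2 - \|z\|_2^2 \big| \ge \e \|z\|_2^2 \Big\} \le 2\exp(-c\e^2 m / K^4).
$$
On the good event, $(1-\e)\|z\|_2^2 \le \|Pz\|_2^2 \le (1+\e)\|z\|_2^2$, and taking square roots (using $\sqrt{1-\e} \ge 1-\e$ and $\sqrt{1+\e} \le 1+\e$) gives the two-sided bound \eqref{eq: JL} for this particular $z$.

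Finally I would apply this to $z = x-y$ for each of the at most $\binom{N}{2} \le N^2$ pairs in $\XX$, and union bound:
$$
\P\{\text{some pair fails}\} \le 2 N^2 \exp(-c\e^2 m / K^4).
$$
Choosing $m \ge C \e^{-2} \log N$ with $C$ large enough (depending only on $K$) makes this at most $0.01$, giving the claim. The only real subtlety is the first step — confirming that $\ip{A_i}{z}^2$ is sub-exponential with the right norm and that Bernstein produces the sub-gaussian branch for the relevant range of deviations; the rest is bookkeeping. The union bound is painless because $m$ needs only to grow logarithmically in $N$, which is exactly what the sub-gaussian tail from Bernstein affords.
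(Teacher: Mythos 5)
Your proposal is correct and follows essentially the same route as the paper's proof: reduce to a concentration statement for $\|Pz\|_2^2$ using the sub-exponential nature of $\ip{A_i}{z}^2 - \|z\|_2^2$, invoke Bernstein's inequality to get a sub-gaussian tail at deviation level $\e$, and union bound over the $O(N^2)$ pairwise differences. The only cosmetic differences are that the paper first normalizes $z$ to lie on the unit sphere and squares the target inequality up front, whereas you keep $z$ general and take square roots at the end; these are equivalent bookkeeping choices.
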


\begin{figure}[htp]			
  \centering \includegraphics[width=0.55\textwidth]{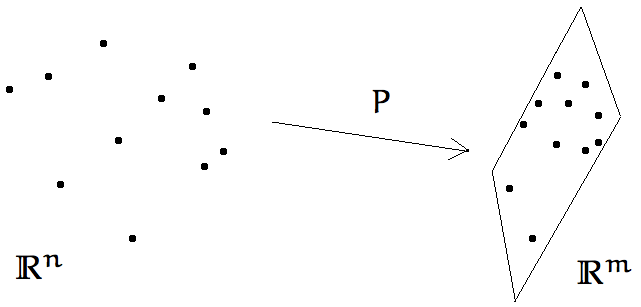} 
  \caption{Johnson-Lindenstrauss Lemma states that a random projection of $N$ data points
    from dimension $n$ to dimension $m \sim \log N$ approximately preserves the distances between the points.}
  \label{fig: JL}	
\end{figure}

\begin{proof}
Take a closer look at the desired conclusion \eqref{eq: JL}. By linearity, $Px-Py=P(x-y)$. 
So, dividing the inequality by $\|x-y\|_2$, we can rewrite \eqref{eq: JL} in the following way: 
\begin{equation}         \label{eq: JL z}
1-\e \le \|Pz\|_2 \le 1+\e \quad \text{for all } z \in T
\end{equation}
where 
$$
T := \left\{ \frac{x-y}{\|x-y\|_2} :\; x,y \in \XX \text{ distinct points} \right\}.
$$
It will be convenient to square the inequality \eqref{eq: JL z}.
Using that $1+\e \le (1+\e)^2$ and $1-\e \ge (1-\e)^2$, 
we see that it is enough to show that 
\begin{equation}         \label{eq: JL squares}
1-\e \le \|Pz\|_2^2 \le 1+\e \quad \text{for all } z \in T.
\end{equation}
By construction, the coordinates of the vector $Pz = \frac{1}{\sqrt{m}} Az$ are 
$\frac{1}{\sqrt{m}} \ip{X_i}{z}$. Thus we can restate \eqref{eq: JL squares} as
\begin{equation}         \label{eq: JL sum}
\Big| \frac{1}{m} \sum_{i=1}^m \ip{X_i}{z}^2 - 1 \Big| \le \e \quad \text{for all } z \in T.
\end{equation}

Results like \eqref{eq: JL sum} are often proved by combining {\em concentration} and a {\em union bound}.
In order to use concentration, we first fix $z \in T$. By assumption, the random variables $\ip{X_i}{z}^2 - 1$ are independent; they have 
zero mean (use isotropy to check this!), and they are sub-exponential (use \eqref{eq: psione psitwo} to check this).
Then Bernstein's inequality (Theorem~\ref{thm: Bernstein}) gives 
$$
\Pr{ \Big| \frac{1}{m} \sum_{i=1}^m \ip{X_i}{z}^2 - 1 \Big| > \e } 
\le 2 \exp (-c \e^2 m). 
$$
(Check!)

Finally, we can unfix $z$ by taking a union bound over all possible $z \in T$: 
\begin{align}
\Pr{ \max_{z \in T} \Big| \frac{1}{m} \sum_{i=1}^m \ip{X_i}{z}^2 - 1 \Big| > \e } 
  &\le \sum_{z \in T} \Pr{ \Big| \frac{1}{m} \sum_{i=1}^m \ip{X_i}{z}^2 - 1 \Big| > \e } \nonumber\\
  &\le |T| \cdot 2 \exp (-c \e^2 m).   \label{eq: JL union bound}
\end{align}
By definition of $T$, we have $|T| \le N^2$. So, if we choose $m \ge C \e^{-2} \log N$
with appropriately large constant $C$, we can make \eqref{eq: JL union bound}
bounded by $0.01$. The proof is complete.
\end{proof}

\subsection{Notes}

The material presented in Sections~\ref{s: sub-gaussian}--\ref{s: sub-gaussian vectors} is basic and 
can be found e.g. in \cite{V-RMT-tutorial} and \cite{V-textbook} with all the proofs. 
Bernstein's and Hoeffding's inequalities that we covered here are two basic examples of {\em concentration inequalities}. 
There are many other useful concentration inequalities for sums of independent random variables 
(e.g. Chernoff's and Bennett's) and for more general objects. The textbook \cite{V-textbook} is an elementary 
introduction into concentration; the books \cite{BLM, Ledoux-Talagrand, Ledoux} offer more 
comprehensive and more advanced accounts of this area.

The original version of Johnson-Lindenstrauss Lemma was proved in \cite{JL}. 
The version we gave here, Theorem~\ref{thm: JL}, was stated with probability of success $0.99$,
but an inspection of the proof gives probability $1-2\exp(-c\e^2 m)$ which is much better for large $m$.
A great variety of ramifications and applications of Johnson-Lindenstrauss lemma are known, 
see e.g. \cite{Matousek, Achlioptas, Ailon-Chazelle, BDDW, BLM, Kane-Nelson}.

\section{Lecture 2: Concentration of sums of independent random matrices}			\label{s: Lecture 2}

In the previous lecture we proved Bernstein's inequality, which quantifies how 
a sum of independent random variables concentrates about its mean. 
We will now study an extension of Bernstein's inequality to higher dimensions, 
which holds for sums of independent {\em random matrices}.

\subsection{Matrix calculus}			\label{s: matrix calculus}

The key idea of developing a matrix Bernstein's inequality will be to use {\em matrix calculus}, 
which allows us to operate with matrices as with scalars -- adding and multiplying them of course, 
but also comparing matrices and applying functions to matrices. Let us explain this.

We can compare matrices to each other using the notion of being {\em positive semidefinite}. 
Let us focus here on $n \times n$ symmetric matrices.
If $A-B$ is a positive semidefinite matrix,\footnote{Recall that a symmetric real $n \times n$ 
  matrix $M$ is called positive semidefinite if $x^\tran M x \ge 0$ for any vector $x \in \R^n$.}  
which we denote $A - B \succeq 0$, then 
we say that $A \succeq B$ (and, of course, $B \preceq A$). This defines a {\em partial order} 
on the set of $n \times n$ symmetric matrices. The term ``partial'' indicates that, unlike the real numbers, 
there exist $n \times n$ symmetric matrices $A$ and $B$ that can not be compared. 
(Give an example where neither $A \preceq B$ nor $B \succeq A$!) 

Next, let us guess how to measure the {\em magnitude} of a matrix $A$.
The magnitude of a scalar $a \in \R$ is measured by the absolute value $|a|$;  
it is the smallest non-negative number $t$ such that
$$
-t \le a \le t.
$$
Extending this reasoning to matrices, we can measure the magnitude of 
an $n \times n$ symmetric matrix $A$ by the smallest non-negative number $t$ 
such that\footnote{Here and later, $I_n$ denotes the $n \times n$ identity matrix.}
$$
-t I_n \preceq A \preceq t I_n.
$$
The smallest $t$ is called the {\em operator norm} of $A$ and is denoted $\|A\|$.
Diagonalizing $A$, we can see that 
\begin{equation}         \label{eq: norm max eig}
\|A\| = \max \{ |\l| :\; \text{$\l$ is an eigenvalue of $A$} \}.
\end{equation}
With a little more work (do it!), we can see that $\|A\|$ is the norm of $A$
acting as a linear operator on the Euclidean space $(\R^n, \|\cdot\|_2)$;
this is why $\|A\|$ is called the operator norm. Thus $\|A\|$ is the smallest non-negative number $M$ 
such that 
$$
\|Ax\|_2 \le M \|x\|_2 \quad \text{for all } x \in \R^n.
$$

Finally, we will need to be able to take {\em functions of matrices}. 
Let $f : \R \to \R$ be a function and $X$ be an $n \times n$ symmetric matrix. 
We can define $f(X)$ in two equivalent ways. 
The spectral theorem allows us to represent $X$ as 
$$
X = \sum_{i=1}^n \l_i u_i u_i^\tran
$$
where $\l_i$ are the eigenvalues of $X$ and $u_i$ are the corresponding eigenvectors.
Then we can simply define
$$
f(X) := \sum_{i=1}^n f(\l_i) u_i u_i^\tran.
$$
Note that $f(X)$ has the same eigenvectors as $X$, but the eigenvalues change under the action of $f$.
An equivalent way to define $f(X)$ is using power series. Suppose the function $f$ has 
a convergent power series expansion about some point $x_0 \in \R$, i.e.
$$
f(x) = \sum_{k=1}^\infty a_k (x-x_0)^k.
$$
Then one can check that the following matrix series converges\footnote{The convergence holds 
  in any given metric on the set of matrices, for example in the metric given by the operator norm.
  In this series, the terms $(X-X_0)^k$ are defined by the usual matrix product.}
and defines $f(X)$:  
$$
f(X) = \sum_{k=1}^\infty a_k (X-X_0)^k.
$$
(Check!)

\subsection{Matrix Bernstein's inequality} 		\label{s: matrix Bernstein sub}

We are now ready to state and prove a remarkable generalization of Bernstein's inequality 
for random matrices. 
 
\begin{theorem}[Matrix Bernstein's inequality]  \label{thm: matrix Bernstein}
  Let $X_1,\ldots,X_N$ be independent, mean zero, $n \times n$ symmetric random matrices, 
  such that $\|X_i\| \le K$ almost surely for all $i$.
  Then, for every $t \ge 0$ we have
  $$
  \P \Big\{ \Big\| \sum_{i=1}^N X_i \Big\| \ge t \Big\} 
  \le 2n \cdot \exp \Big( -\frac{t^2/2}{\s^2 + Kt/3} \Big).
  $$
  Here $\s^2 = \left\| \sum_{i=1}^N \E X_i^2 \right\|$ is the norm of the ``matrix variance'' of the sum.
\end{theorem}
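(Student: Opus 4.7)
The plan is to mimic the four-step Bernstein argument (Markov $\to$ independence $\to$ bound each MGF $\to$ optimize), but every step needs a matrix-friendly replacement, since matrices do not commute and we must control an operator norm rather than a scalar. I will work through the matrix calculus set up in Section~\ref{s: matrix calculus}.

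First, reduce to bounding one side. Since $\|S\| = \max(\l_{\max}(S), \l_{\max}(-S))$ where $S := \sum_{i=1}^N X_i$, a union bound reduces the task to controlling $\Pr{\l_{\max}(S) \ge t}$; the factor $2$ in $2n$ comes from here and the factor $n$ will appear in the next step. Then, for $\l > 0$, apply the matrix Markov inequality: because $x \mapsto e^{\l x}$ is monotone and $\l_{\max}(e^{\l S}) \le \tr(e^{\l S})$ (the matrix $e^{\l S}$ is positive semidefinite, so its largest eigenvalue is dominated by its trace),
$$
\Pr{\l_{\max}(S) \ge t} \le e^{-\l t} \, \E \tr \exp(\l S).
$$

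Second, deal with independence. This is where the non-commutativity bites: we cannot simply factor $\E \exp(\l \sum X_i)$ as a product of $\E \exp(\l X_i)$. The standard workaround is Lieb's concavity theorem, which says that $A \mapsto \tr \exp(H + \log A)$ is concave on positive definite matrices. Combined with Jensen's inequality and iteration over $i = 1,\ldots,N$, Lieb's theorem yields the crucial subadditivity
$$
\E \tr \exp(\l S) \le \tr \exp \Big( \sum_{i=1}^N \log \E \exp(\l X_i) \Big).
$$
\emph{This is the main obstacle of the proof}: Lieb's inequality is a deep fact from matrix analysis, and without it one cannot pass from the scalar to the matrix setting. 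I would simply cite it.

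Third, bound the matrix MGF of each $X_i$. Using $\E X_i = 0$ and $\|X_i\| \le K$, the power series for $e^{\l X_i}$ can be truncated after the quadratic term: for $0 < \l < 3/K$, the scalar inequality $e^x \le 1 + x + \frac{x^2/2}{1 - |x|/3}$ applied spectrally gives
$$
\E \exp(\l X_i) \preceq I_n + \frac{\l^2/2}{1 - \l K/3}\, \E X_i^2 \preceq \exp \Big( \frac{\l^2/2}{1 - \l K/3}\, \E X_i^2 \Big),
$$
where the second step uses $I + A \preceq e^A$ for $A \succeq 0$. Applying the operator monotone function $\log$ (in the sense appropriate for this framework) and summing,
$$
\sum_{i=1}^N \log \E \exp(\l X_i) \preceq \frac{\l^2/2}{1 - \l K/3} \sum_{i=1}^N \E X_i^2.
$$
Combining with the previous step and using $\tr \exp(A) \le n \exp(\l_{\max}(A))$,
$$
\Pr{\l_{\max}(S) \ge t} \le n \exp \Big( -\l t + \frac{\l^2 \s^2/2}{1 - \l K/3} \Big).
$$

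Finally, optimize. Choosing $\l = t/(\s^2 + Kt/3)$, which satisfies $\l < 3/K$, reduces the exponent to $-t^2/2(\s^2 + Kt/3)$, matching the statement. A union bound over $\pm S$ supplies the factor $2n$, completing the proof.
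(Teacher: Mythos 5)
Your proposal is correct and follows essentially the same route as the paper's proof: reduce to a one-sided bound on $\l_{\max}(S)$, bound $\E\tr\exp(\l S)$ via Markov's inequality, iterate Lieb's concavity theorem with Jensen to obtain the subadditivity of the trace-MGF, bound each $\E e^{\l X_i}$ via the truncated scalar power series applied spectrally, and then pass from $\tr\exp$ to $n\exp(\l_{\max})$ and optimize in $\l$. You also correctly identify Lieb's theorem as the key imported ingredient, exactly as the paper does.
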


The scalar case, where $n=1$, is the classical Bernstein's inequality we stated in \eqref{eq: Bernstein bounded}.
A remarkable feature of matrix Bernstein's inequality, which makes it especially powerful,
is that it {\em does not require any independence of the entries} (or the rows or columns) of $X_i$; 
all is needed is that the random matrices $X_i$ be independent {\em from each other}.

In the rest of this section we will prove matrix Bernstein's inequality, and give a few applications in this and next lecture.  

Our proof will be based on bounding the {\em moment generating function} (MGF)
$\E \exp(\l S)$ of the sum $S = \sum_{i=1}^N X_i$. Note that to exponentiate the matrix $\l S$ in order to define 
the matrix MGF, we rely on the matrix calculus that we introduced in Section~\ref{s: matrix calculus}.

If the terms $X_i$ were {\em scalars}, independence would yield the classical fact
that the MGF of a product is the product of MGF's, i.e. 
\begin{equation}         \label{eq: MGF sum scalar}
  \E \exp(\l S) = \E \prod_{i=1}^N \exp(\l X_i) = \prod_{i=1}^N \E \exp(\l X_i).
\end{equation}
But for {\em matrices}, this reasoning breaks down badly, for in general
$$
e^{X+Y} \ne e^{X} e^{Y}
$$
even for $2 \times 2$ symmetric matrices $X$ and $Y$. (Give a counterexample!)

Fortunately, there are some trace inequalities that can often serve as proxies for the missing 
equality $e^{X+Y} = e^{X} e^{Y}$.
One of such proxies is the {\em Golden-Thompson inequality}, which states that 
\begin{equation}         \label{eq: Golden-Thompson}
\tr (e^{X+Y}) \le \tr(e^X e^Y)
\end{equation}
for any $n \times n$ symmetric matrices $X$ and $Y$.
Another result, which we will actually use in the proof of matrix Bernstein's inequality, 
is {\em Lieb's inequality}.

\begin{theorem}[Lieb's inequality]	\label{thm: Lieb}
  Let $H$ be an $n \times n$ symmetric matrix. Then the function 
  $$
  f(X) = \tr \exp(H + \log X)
  $$
  is concave\footnote{Formally, {\em concavity} of $f$ means
    that $f(\l X + (1-\l)Y) \ge \l f(X) + (1-\l) f(Y)$ for all symmetric matrices $X$ and $Y$ and all $\l \in [0,1]$.} 
  on the space on $n \times n$ symmetric matrices. 
\end{theorem}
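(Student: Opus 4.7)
The plan is to derive Lieb's concavity of $f(X) = \tr\exp(H + \log X)$ from a deeper joint-concavity theorem -- also commonly called the Lieb concavity theorem -- stating that for each $n \times n$ matrix $K$ and each $p \in (0,1)$, the bilinear form
$$(A, B) \;\mapsto\; \tr\bigl( K^{*} A^{p} K B^{1-p}\bigr)$$
is jointly concave on pairs of positive definite matrices. I would take this statement as a black box; its own proof is the genuinely hard step and proceeds via complex interpolation (Lieb's original argument) or Epstein's Herglotz-function method, neither of which is elementary.

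Given the black box, the first step is to invoke the Lie--Trotter product formula
$$\exp(H + \log X) \;=\; \lim_{m\to\infty}\bigl( e^{H/m} X^{1/m} \bigr)^{m},$$
valid for Hermitian $H$ and positive definite $X$, with convergence in operator norm. Taking traces yields
$$f(X) \;=\; \lim_{m\to\infty} f_m(X), \qquad f_m(X) := \tr\bigl[\bigl( e^{H/m} X^{1/m} \bigr)^{m}\bigr].$$
Since a pointwise limit of concave functions is concave, it is enough to prove that each $f_m$ is concave on the positive definite cone.

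The second step establishes concavity of $f_m$. The crucial observation is that the diagonal $X \mapsto g(X, X)$ of any jointly concave $g(A, B)$ is automatically concave in $X$. For $m = 2$ this already suffices:
$$f_2(X) = \tr\bigl( e^{H/2} X^{1/2} e^{H/2} X^{1/2} \bigr)$$
is the diagonal of the bilinear form $(A,B) \mapsto \tr(e^{H/2} A^{1/2} e^{H/2} B^{1/2})$ with $K = e^{H/2}$ and $p = 1/2$, and hence concave by the black box. For general $m$, one sets up an induction that exposes exactly two of the $m$ factors of $X^{1/m}$ as the two arguments of the bilinear form while absorbing the remaining $m - 2$ factors into the ``constant'' $K$ of the current induction stage -- so that the joint-concavity black box, applied on the diagonal, delivers the inductive step.

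The main obstacle is of course the Lieb concavity theorem itself, taken here as a black box; bypassing it brings the proof back to one of the nontrivial interpolation or Pick-function arguments. The secondary and still delicate obstacle is the inductive bookkeeping in the second step: since the bilinear form exposes only two occurrences of $X^{1/m}$ at a time while $f_m$ involves $m$ of them, the induction hypothesis must be strengthened appropriately (for instance, to cover concavity of $X \mapsto \tr\bigl[(C\,X^{1/m})^{k}\bigr]$ for all $k \le m$ and for a suitable family of fixed matrices $C$) so that the absorbed factors remain compatible when we freeze them as constants at the next stage. Once this induction is properly arranged, the Lie--Trotter limit from Step~1 together with the stability of concavity under pointwise limits completes the proof of Theorem~\ref{thm: Lieb}.
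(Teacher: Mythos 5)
The paper does not prove Theorem~\ref{thm: Lieb}; it states it and points to Tropp's survey \cite{Tropp-book} for a proof. So there is no in-paper argument to compare against, and your proposal has to stand on its own. The overall architecture (Lie--Trotter to reduce to finite-$m$ approximants $f_m(X)=\tr\bigl[(e^{H/m}X^{1/m})^m\bigr]$, then pass to the pointwise limit) is sound in principle, and the $m=2$ case is exactly the diagonal of Lieb's jointly concave functional $\tr(K^*A^{1/2}KB^{1/2})$ with $K=e^{H/2}$. But the inductive step for $m\ge 3$ is a genuine gap, not just ``delicate bookkeeping.'' The two-argument Lieb functional $\tr(K^*A^pKB^q)$ exposes exactly one power of $A$ and one power of $B$, so its diagonal is always of the form $\tr(K^*X^pKX^q)$, with a single sandwiching $K$. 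The function $f_m$ for $m\ge 3$ has $m$ interleaved copies of $e^{H/m}$, and it simply cannot be written as the diagonal of the two-argument functional. ``Absorbing the remaining $m-2$ factors of $X^{1/m}$ into the constant $K$'' is not a legitimate move: once those factors are treated as a fixed $K$, the resulting jointly concave map no longer has $f_m$ on its diagonal, because the frozen factors are $X$-dependent. Your proposed strengthened induction hypothesis (concavity of $X\mapsto\tr[(CX^{1/m})^k]$) is just a restatement of what you need to prove; it does not supply a mechanism for the inductive step.

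What would actually close the gap is a separate theorem: by cyclicity $f_m(X)=\tr\bigl[(BX^{1/m}B)^m\bigr]$ with $B=e^{H/(2m)}$, so concavity of $f_m$ is precisely Epstein's theorem, that $X\mapsto\tr\bigl[(K^*X^pK)^{1/p}\bigr]$ is concave for $0<p\le 1$ (take $p=1/m$). But Epstein's theorem is a different black box from the two-argument Lieb concavity theorem, of comparable depth, and your write-up explicitly rules out that route. The derivation you \emph{want} from the Lieb concavity theorem goes the other way: Lieb concavity $\Rightarrow$ joint convexity of the quantum relative entropy $D(T\|X)=\tr(T\log T-T\log X)$ (Lindblad), and then, via the Legendre/Gibbs variational formula $\tr\exp(H+\log X)=\sup_{T\succ 0}\bigl\{\tr(TH)+\tr T-D(T\|X)\bigr\}$, the \emph{joint} convexity of $D$ is exactly what makes a sup over $T$ yield a concave function of $X$ (one takes $T=\l T_1+(1-\l)T_2$). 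This is the argument in the source the paper cites. In short: either replace the black box by Epstein's theorem and keep your Lie--Trotter skeleton, or keep the Lieb concavity black box and replace the induction by the relative-entropy/variational argument; as written, Step~2 does not go through.
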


Note that in the scalar case, where $n=1$, the function $f$ in Lieb's inequality is linear and the result is trivial.  

To use Lieb's inequality in a probabilistic context, we will combine it with the classical Jensen's inequality. 
It states that for any concave function $f$ and a random matrix $X$, one has\footnote{Jensen's inequality is 
  usually stated for a {\em convex} function $g$ and a {\em scalar} random variable $X$, and it reads 
  $g(\E X) \le \E g(X)$. From this, inequality \eqref{eq: Jensen} for concave functions and random matrices 
  easily follows (Check!).}
\begin{equation}         \label{eq: Jensen}
\E f(X) \le f(\E X).
\end{equation}
Using this for the function $f$ in Lieb's inequality, we get
$$
\E \tr \exp(H + \log X) \le \tr \exp(H + \log \E X).
$$
And changing variables to $X = e^Z$, we get the following:

\begin{lemma}[Lieb's inequality for random matrices]		\label{lem: Lieb}
  Let $H$ be a fixed $n \times n$ symmetric matrix and $Z$ be an $n \times n$ symmetric random matrix.
  Then 
  $$
  \E \tr \exp (H+Z) \le \tr \exp(H + \log \E e^Z).
  $$
\end{lemma}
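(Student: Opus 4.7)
The plan is essentially spelled out by the preceding paragraph: combine Lieb's theorem (Theorem~\ref{thm: Lieb}) with the matrix version of Jensen's inequality~\eqref{eq: Jensen}, and then change variables $X = e^Z$ so that $\log X = Z$. I would proceed in three short steps.

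First, I would recall that by Lieb's theorem the map $f(X) = \tr \exp(H + \log X)$ is concave on the open cone of positive-definite symmetric $n \times n$ matrices. Second, I would apply Jensen's inequality in the form $\E f(X) \le f(\E X)$, valid for any concave $f$ on a convex set of matrices, taking $X = e^Z$. Note that $e^Z$ is positive definite almost surely (since $Z$ is symmetric and its eigenvalues are real, the eigenvalues of $e^Z$ are strictly positive), so $f(e^Z)$ is well defined and equal to $\tr \exp(H + Z)$. Moreover $\E e^Z$ is itself positive definite, as an average of positive definite matrices. Therefore
\[
\E \tr \exp(H + Z) = \E f(e^Z) \le f(\E e^Z) = \tr \exp(H + \log \E e^Z),
\]
which is exactly the claimed bound.

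The only genuine subtlety is justifying the matrix Jensen inequality~\eqref{eq: Jensen}, which the paper leaves to the reader. I would handle this via a supporting hyperplane: since $f$ is concave and finite on the open convex cone of positive-definite matrices, at the interior point $Y_0 = \E X$ there exists an affine functional $L$ on the space of symmetric matrices with $L \ge f$ everywhere and $L(Y_0) = f(Y_0)$. Because $L$ is linear-plus-constant, $\E L(X) = L(\E X)$, and hence
\[
\E f(X) \le \E L(X) = L(\E X) = f(\E X).
\]
This is the step I would treat most carefully; once it is in hand, the substitution $X = e^Z$ immediately produces the lemma. No matrix exponential identities (such as Golden--Thompson) are needed at this stage — Lieb's theorem already packages all the required non-commutativity into the concavity of $f$.
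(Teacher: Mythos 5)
Your proof is correct and follows exactly the paper's route: apply Jensen's inequality to the Lieb-concave function $f(X) = \tr\exp(H + \log X)$ and substitute $X = e^Z$. The supporting-hyperplane justification of matrix Jensen is a nice extra, but it is the standard argument the paper implicitly expects when it says the matrix/concave form of Jensen ``easily follows.''
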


Lieb's inequality is a perfect tool for bounding the MGF of a sum of independent random variables
$S = \sum_{i=1}^N X_i$. To do this, let us condition on the random variables $X_1,\ldots,X_{N-1}$.
Apply Lemma~\ref{lem: Lieb} for the fixed matrix $H := \sum_{i=1}^{N-1} \l X_i$ and the random 
matrix $Z := \l X_N$, and afterwards take the expectation with respect to $X_1,\ldots,X_{N-1}$. By the 
law of total expectation, we get
$$
\E \tr \exp(\l S) \le \E \tr \exp \Big[ \sum_{i=1}^{N-1} \l X_i + \log \E e^{\l X_N} \Big].
$$
Next, apply Lemma~\ref{lem: Lieb} in a similar manner for 
$H := \sum_{i=1}^{N-2} \l X_i + \log \E e^{\l X_N}$ and $Z := \l X_{N-1}$, and so on. 
After $N$ times, we obtain:

\begin{lemma}[MGF of a sum of independent random matrices] 	\label{lem: MGF sum matrix}
  Let $X_1,\ldots,X_N$ be independent $n \times n$ symmetric random matrices.
  Then the sum $S = \sum_{i=1}^N X_i$ satisfies
  $$
  \E \tr \exp(\l S) \le \tr \exp \Big[ \sum_{i=1}^N \log \E e^{\l X_i} \Big].
  $$
\end{lemma}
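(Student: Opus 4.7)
The plan is to peel off the summands one at a time from the exponent, applying Lemma~\ref{lem: Lieb} once for each of the $N$ random matrices. I would formalize this as a finite downward induction on the index $k$, showing that for every $0 \le k \le N$,
$$
\E \tr \exp(\l S) \;\le\; \E \tr \exp\!\Big[\, \l \sum_{i=1}^{k} X_i \;+\; \sum_{i=k+1}^{N} \log \E e^{\l X_i} \,\Big],
$$
where the outer expectation on the right is only over $X_1,\ldots,X_k$. The case $k=N$ is trivial, and the case $k=0$ is the claim (the trace is then applied to a deterministic matrix, so the outer expectation disappears).

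For the inductive step going from $k$ to $k-1$, I would condition on $X_1,\ldots,X_{k-1}$. By independence of the $X_i$'s, every quantity $\log \E e^{\l X_i}$ for $i \ge k+1$ is a deterministic matrix, so conditional on $X_1,\ldots,X_{k-1}$ the matrix
$$
H \;:=\; \l \sum_{i=1}^{k-1} X_i \;+\; \sum_{i=k+1}^{N} \log \E e^{\l X_i}
$$
is fixed, and $Z := \l X_k$ is a random matrix independent of $H$. Applying Lemma~\ref{lem: Lieb} to this $H$ and $Z$ gives
$$
\E_{X_k} \tr \exp(H + \l X_k) \;\le\; \tr \exp\bigl(H + \log \E e^{\l X_k}\bigr),
$$
and taking expectation over $X_1,\ldots,X_{k-1}$ together with the tower property of conditional expectation advances the induction.

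The only real subtlety — and what I expect to be the main thing to be careful about — is the bookkeeping for the conditioning: at each step one must check that the matrix playing the role of $H$ in Lemma~\ref{lem: Lieb} is measurable with respect to the variables we are conditioning on, so that Lieb's inequality (which requires $H$ fixed and $Z$ random) legitimately applies. Independence of the $X_i$ makes this automatic, since all the already-produced $\log \E e^{\l X_i}$ terms are constants and can be absorbed into $H$ without disturbing the independence between $H$ and the next summand $\l X_k$ to be processed. No further estimates are needed; the conclusion follows purely from $N$ iterations of Lemma~\ref{lem: Lieb}.
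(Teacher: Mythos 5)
Your proposal is correct and is essentially the paper's own proof: the paper also conditions on $X_1,\ldots,X_{k-1}$, applies Lieb's inequality (via Lemma~\ref{lem: Lieb}) to peel off $\l X_k$ as the random matrix $Z$ while absorbing the already-produced $\log \E e^{\l X_i}$ terms into the fixed matrix $H$, and iterates $N$ times. You have simply made explicit, as a downward induction with a clean invariant, the "and so on" that the paper leaves informal, and you correctly flag the role of independence in ensuring the conditional MGF of $X_k$ equals the unconditional one.
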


Think of this inequality is a matrix version of the scalar identity \eqref{eq: MGF sum scalar}. 
The main difference is that it bounds the trace of the MGF\footnote{Note that 
  the order of expectation and trace can be swapped using linearity.} 
rather the MGF itself. 

You may recall from a course in probability theory that the quantity $\log \E e^{\l X_i}$ 
that appears in this bound is called the {\em cumulant generating function} of $X_i$.

Lemma~\ref{lem: MGF sum matrix} reduces the complexity of our task significantly,
for it is much easier to bound the cumulant generating function of each {\em single} random variable $X_i$
than to say something about their sum. Here is a simple bound. 

\begin{lemma}[Moment generating function]			\label{lem: matrix MGF}
  Let $X$ be an $n \times n$ symmetric random matrix. 
  Assume that $\E X = 0$ and $\|X\| \le K$ almost surely.
  Then, for all $0 < \l < 3/K$ we have 
  $$
  \E \exp(\l X) \preceq \exp \left( g(\lambda) \E X^2 \right) 
  \quad \text{where} \quad
  g(\lambda) = \frac{\l^2/2}{1-\l K/3}.
  $$ 
\end{lemma}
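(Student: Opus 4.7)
The plan is to reduce the matrix statement to a scalar inequality via the spectral theorem, then use linearity of expectation and one final elementary matrix inequality $I + A \preceq e^A$. The crucial scalar fact I would prove first is:
$$e^{\l x} \le 1 + \l x + g(\l)\, x^2 \quad \text{for all } x \in [-K, K] \text{ and } 0 < \l < 3/K.$$
To get this, I would write $e^{\l x} - 1 - \l x = \sum_{k \ge 2} (\l x)^k / k!$ and factor $x^2$ out. The key estimates are that $x^{k-2} \le K^{k-2}$ whenever $|x| \le K$ (trivially true whether $k-2$ is even or odd), and that $k! \ge 2 \cdot 3^{k-2}$ for $k \ge 2$. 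Combined, these turn the tail $\sum_{k \ge 2}(\l x)^k/k!$ into $x^2$ times a geometric series in $\l K/3$, which sums to exactly $g(\l) = (\l^2/2)/(1 - \l K/3)$.

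Next I would lift the scalar inequality to matrices. Since $\|X\| \le K$ means all eigenvalues of $X$ lie in $[-K, K]$, applying the functional calculus from Section \ref{s: matrix calculus} eigenvalue-by-eigenvalue (both sides are functions of $X$, so they are simultaneously diagonalized, and the scalar pointwise inequality on the spectrum of $X$ translates directly into a $\preceq$-inequality) gives
$$\exp(\l X) \preceq I_n + \l X + g(\l)\, X^2.$$
Taking expectations and using $\E X = 0$ along with linearity of $\E$ with respect to the $\preceq$-order yields $\E \exp(\l X) \preceq I_n + g(\l)\, \E X^2$.

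Finally, I would note that $1 + t \le e^t$ for all real $t$, and again by the functional calculus this lifts to $I_n + A \preceq \exp(A)$ for any symmetric matrix $A$; applying this with $A = g(\l)\, \E X^2$ closes the chain:
$$\E \exp(\l X) \preceq I_n + g(\l)\, \E X^2 \preceq \exp\bigl(g(\l)\, \E X^2\bigr).$$
The main subtlety, and the only place that is not utterly routine, is the first lifting step: one has to be careful that the inequality $e^{\l x} \le 1 + \l x + g(\l) x^2$ is a pointwise statement about functions of the single matrix $X$, so there is no noncommutativity obstruction (unlike what happens with $e^{X+Y}$). Once this is recognized, everything reduces to the scalar computation and the one-line spectral-calculus bound $I_n + A \preceq e^A$.
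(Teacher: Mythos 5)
Your proposal is correct and follows exactly the same three-step route as the paper's own proof: establish the scalar inequality $e^{\l x} \le 1 + \l x + g(\l) x^2$ for $|x| \le K$, lift it to the matrix inequality $e^{\l X} \preceq I + \l X + g(\l) X^2$ via the functional calculus, then take expectations and finish with $I + A \preceq e^A$. The only difference is that you have filled in the details of the scalar step (via the series expansion, $x^{k-2} \le K^{k-2}$, and $k! \ge 2 \cdot 3^{k-2}$), which the paper leaves to the reader.
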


\begin{proof}
First, check that the following scalar inequality holds for $0 < \l < 3/K$ and $|x| \le K$:
$$
e^{\l x} \le 1 + \l x + g(\l) x^2.
$$
Then extend it to matrices using matrix calculus: if $0 < \l < 3/K$ and $\|X\| \le K$ then 
$$
e^{\l X} \preceq I + \l X + g(\l) X^2.
$$
(Do these two steps carefully!)
Finally, take the expectation and recall that $\E X = 0$ to obtain
$$
\E e^{\l X} \preceq I + g(\l) \E X^2 \preceq \exp \left( g(\lambda) \E X^2 \right).
$$
In the last inequality, we use the matrix version of the scalar inequality $1+z \le e^z$
that holds for all $z \in \R$. 
The lemma is proved.
\end{proof}

\begin{proof}[Proof of Matrix Bernstein's inequality]
We would like to bound the operator norm of the random matrix $S = \sum_{i=1}^N X_i$, 
which, as we know from \eqref{eq: norm max eig}, is the largest eigenvalue of $S$ {\em by magnitude}. 
For simplicity of exposition, let us drop the absolute value from \eqref{eq: norm max eig} and 
just bound the maximal eigenvalue of $S$, which we denote $\l_{\max}(S)$.
(Once this is done, we can repeat the argument for $-S$ to reinstate the absolute value. Do this!)
So, we are to bound
\begin{align*}
\Pr{ \l_{\max}(S) \ge t } 
  &= \Pr{ e^{\l \cdot \l_{\max}(S)} \ge e^{\l t} }  \quad \text{(multiply by $\l>0$ and exponentiate)}\\
  &\le e^{-\l t} \, \E e^{\l \cdot \l_{\max}(S)} \quad \text{(by Markov's inequality)}\\
  &= e^{-\l t} \, \E \l_{\max} (e^{\l S}) \quad \text{(check!)}\\
  &\le e^{-\l t} \, \E \tr e^{\l S} \quad \text{(max of eigenvalues is bounded by the sum)}\\
  &\le e^{-\l t} \tr \exp \Big[ \sum_{i=1}^N \log \E e^{\l X_i} \Big] 
      \quad \text{(use Lemma~\ref{lem: MGF sum matrix})}\\
  &\le \tr \exp \left[ -\l t + g(\l) Z \right] \quad \text{(by Lemma~\ref{lem: matrix MGF})}
\end{align*}
where 
$$
Z := \sum_{i=1}^{N} \E X_i^2.
$$

It remains to optimize this bound in $\l$. The minimum is attained for $\l = t/(\s^2 + Kt/3)$. (Check!)
With this value of $\l$, we conclude 
$$
\Pr{ \l_{\max}(S) \ge t } 
\le n \cdot \exp \Big( -\frac{t^2/2}{\s^2 + Kt/3} \Big).
$$
This completes the proof of Theorem~\ref{thm: matrix Bernstein}. 
\end{proof}

Bernstein's inequality gives a powerful {\em tail bound} for $\| \sum_{i=1}^N X_i \|$. 
This easily implies a useful bound on the {\em expectation}:

\begin{corollary}[Expected norm of sum of random matrices]  \label{cor: matrix Bernstein expectation}
  Let~$X_1,\ldots,X_N$ be independent, mean zero, $n \times n$ symmetric random matrices, 
  such that $\|X_i\| \le K$ almost surely for all $i$.
  Then 
  $$
  \E \Big\| \sum_{i=1}^N X_i \Big\| 
  \lesssim \s \sqrt{\log n} + K \log n
  $$
  where $\s = \big\| \sum_{i=1}^N \E X_i^2 \big\|^{1/2}$.
\end{corollary}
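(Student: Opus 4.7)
The plan is to derive the expectation bound from the tail bound in Theorem~\ref{thm: matrix Bernstein} by the standard layer-cake integration, but taking care to split the integral into the sub-gaussian regime (where the $\sigma^2$ term in the denominator dominates) and the sub-exponential regime (where $Kt/3$ dominates). The factor $\sqrt{\log n}$ in the first term and $\log n$ in the second term will come out precisely because we need to cross the threshold at which the $2n$ prefactor in matrix Bernstein is killed by the exponential.

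First I would write $Z := \bigl\| \sum_{i=1}^N X_i \bigr\|$ and, via the layer-cake formula, express
$$
\E Z = \int_0^\infty \Pr{Z \ge t}\, dt.
$$
I would then set a threshold $t_0 := C_0 \bigl( \sigma \sqrt{\log n} + K \log n \bigr)$ for a sufficiently large absolute constant $C_0$ to be determined, and split the integral as $\E Z \le t_0 + \int_{t_0}^\infty \Pr{Z \ge t}\, dt$, using the trivial bound $\Pr{Z \ge t} \le 1$ on $[0,t_0]$. The $t_0$ piece already delivers the advertised bound; the task is to show the remaining integral is $O(\sigma + K)$.

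Next I would handle the tail integral by relaxing the matrix Bernstein bound using $\s^2 + Kt/3 \le 2\max(\s^2, Kt/3)$, so that
$$
\Pr{Z \ge t} \le 2n \exp\!\Big(-\tfrac{ct^2}{\s^2}\Big) + 2n \exp\!\Big(-\tfrac{ct}{K}\Big).
$$
Each piece can be integrated in closed form. For the sub-gaussian piece, the bound $\int_{t_0}^\infty e^{-ct^2/\s^2}\, dt \le \frac{\s^2}{2c\, t_0}\, e^{-c t_0^2/\s^2}$ shows that once $t_0 \ge C_0 \s\sqrt{\log n}$ with $C_0$ large, the $2n$ prefactor is overwhelmed by $e^{-c t_0^2/\s^2}$ and the total is $\lesssim \s$. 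For the sub-exponential piece, $\int_{t_0}^\infty e^{-ct/K}\, dt = (K/c) e^{-c t_0/K}$, and similarly choosing $t_0 \ge C_0 K \log n$ makes $2n \cdot e^{-c t_0/K} \le 1$, so the total is $\lesssim K$.

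Adding everything up, $\E Z \lesssim \s \sqrt{\log n} + K \log n + \s + K$, and since $\s \le \s\sqrt{\log n}$ and $K \le K\log n$ for $n \ge 2$ (while the bound for $n = 1$ reduces to the scalar case, which is trivially handled), the two lower-order terms are absorbed. The only real subtlety, and the step I would write out most carefully, is calibrating $C_0$ so that in both regimes the $2n$ prefactor is beaten by the exponential decay at $t = t_0$; everything else is routine calculus.
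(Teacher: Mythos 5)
Your proposal is correct and follows exactly the approach the paper intends: applying the layer-cake identity to the matrix Bernstein tail bound, splitting the integral at the threshold $t_0 \sim \sigma\sqrt{\log n} + K\log n$, and choosing the constant large enough that the exponential decay beats the $2n$ prefactor in both the sub-gaussian and sub-exponential regimes. This is precisely the computation the paper leaves to the reader with ``Check!''.
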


\begin{proof}
The link from tail bounds to expectation is provided by the basic identity 
\begin{equation}         \label{eq: integral}
\E Z = \int_0^\infty \Pr{Z > t} \, dt
\end{equation}
which is valid for any non-negative random variable $Z$. (Check it!)
Integrating the tail bound given by matrix Bernstein's inequality, you 
will arrive at the expectation bound we claimed. (Check!)
\end{proof}

Notice that the bound in this corollary has mild, logarithmic, dependence on the ambient dimension $n$. 
As we will see shortly, this can be an important feature in some applications.

\subsection{Community recovery in networks}		\label{s: networks}

Matrix Bernstein's inequality has many applications. The one we are going to discuss first
is for the analysis of {\em networks}. A network can be mathematically represented by a graph, 
a set of $n$ vertices with edges connecting some of them. 
For simplicity, we will consider undirected graphs where the edges do not have arrows.
Real world networks often tend to have clusters, or {\em communities} --
subsets of vertices that are connected by unusually many edges. (Think, for example, about 
a friendship network where communities form around some common interests.) 
An important problem in data science is to recover communities from a given network. 

We are going to explain one of the simplest methods for community recovery, which is 
called {\em spectral clustering}. But before we introduce it, we will first of all place a 
probabilistic model on the networks we consider. In other words, it will be convenient for us 
to view networks as {\em random graphs} whose edges are formed at random. Although not all 
real-world networks are truly random, this simplistic model can motivate us to develop algorithms
that may empirically succeed also for real-world networks.

The basic probabilistic model of random graphs is the {\em Erd\"os-R\'enyi model}. 

\begin{definition}[Erd\"os-R\'enyi model]
  Consider a set of $n$ vertices and connect every pair of vertices independently and with fixed probability $p$. 
  The resulting random graph is said to follow the {\em Erd\"os-R\'enyi model} $G(n,p)$.
\end{definition}

The Erd\"os-R\'enyi random model is very simple. But it is not a good choice if we want to model 
a network with communities, for every pair of vertices has the same chance to be connected. 
So let us introduce a natural generalization of the Erd\"os-R\'enyi random model that does allow 
for community structure:

\begin{definition}[Stochastic block model]		\label{def: SBM}
  Partition a set of $n$ vertices into two subsets (``communities'') 
  with $n/2$ vertices each, and connect every pair of vertices independently with probability $p$ 
  if they belong to the same community and $q<p$ if not. The resulting random graph is said
  to follow the {\em stochastic block model} $G(n,p,q)$.
\end{definition}

Figure~\ref{fig: SBM} illustrates a simulation of a stochastic block model.
\begin{figure}[htp]			
  \centering \includegraphics[width=0.45\textwidth]{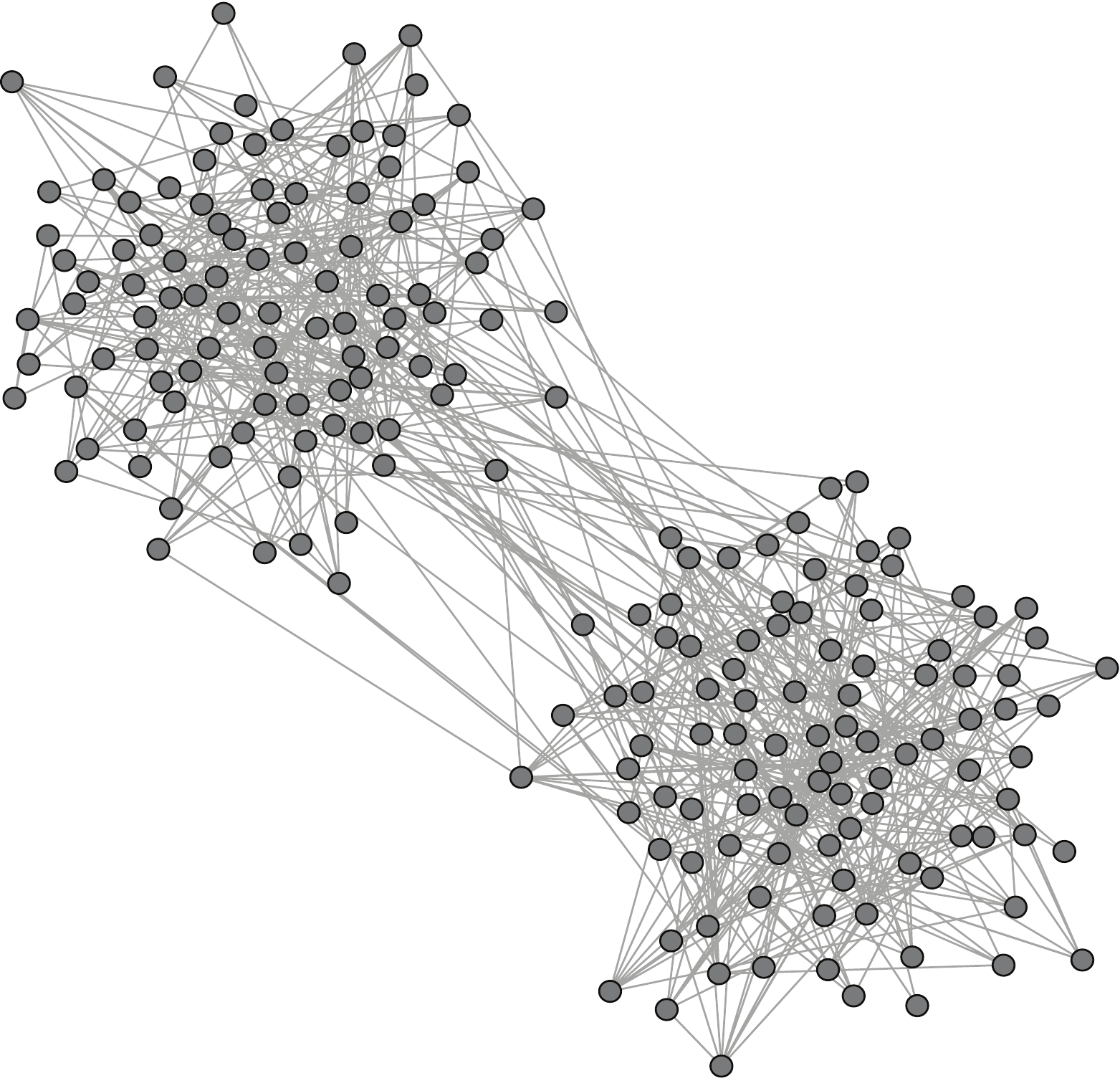} 
  \caption{A network generated according to the stochastic block model $G(n,p,q)$ 
    with $n = 200$ nodes and connection probabilities $p = 1/20$ and $q = 1/200$.}
  \label{fig: SBM}	
\end{figure}

Suppose we are shown one instance of a random graph generated according to 
a stochastic block model $G(n,p,q)$. How can we find which vertices belong to which community?

The {\em spectral clustering} algorithm we are going to explain will do precisely this. 
It will be based on the spectrum of the {\em adjacency matrix} $A$ of the graph, which is the 
$n \times n$ symmetric matrix whose entries $A_{ij}$ equal $1$ if the vertices $i$ and $j$ are connected by an edge,
and $0$ otherwise.\footnote{For convenience, we call the vertices of the graph $1,2,\ldots,n$.}

The adjacency matrix $A$ is a random matrix. Let us compute its expectation first. 
This is easy, since the entires of $A$ are Bernoulli random variables. If $i$ and $j$ belong to the same community then $\E A_{ij} = p$ and otherwise $\E A_{ij} = q$. Thus $A$ has block structure: for example, 
if $n=4$ then $A$ looks like this:
$$
\E A 
= \left[\begin{array}{cc|cc}
  p & p & q & q \\  
  p & p & q & q \\ \hline 
  q & q & p & p \\
  q & q & p & p 
\end{array}\right]
$$
(For illustration purposes, we grouped the vertices from each community together.)

You will easily check that $A$ has rank $2$, and the non-zero eigenvalues
and the corresponding eigenvectors are
\begin{equation}         \label{eq: eigs D}
  \l_1(\E A) = \Big( \frac{p+q}{2} \Big) n , \quad 
  v_1(\E A) = 
  \left[\begin{array}{c}
    1 \\ 1 \\ \hline 1 \\ 1
  \end{array}\right]; \quad
  \l_2(\E A) = \Big( \frac{p-q}{2} \Big) n, \quad 
  v_2(\E A) = 
  \left[\begin{array}{r}
    1 \\ 1 \\ \hline -1 \\ -1
  \end{array}\right].
\end{equation}
(Check!)

The eigenvalues and eigenvectors of $\E A$ tell us a lot about the community structure of the underlying graph. 
Indeed, the first (larger) eigenvalue,
$$
d :=  \Big( \frac{p+q}{2} \Big) n,
$$
is the {\em expected degree} of any vertex of the graph.\footnote{The degree of the vertex 
  is the number of edges connected to it.}
The second eigenvalue tells us whether there is any community structure at all 
(which happens when $p \ne q$ and thus $\l_2(\E A) \ne 0$).
The first eigenvector $v_1$ is not informative of the structure of the network at all. 
It is the second eigenvector $v_2$ that tells us exactly how to separate the vertices into the two communities:
the signs of the coefficients of $v_2$ can be used for this purpose.

Thus if we know $\E A$, we can recover the community structure of the network from the 
signs of the second eigenvector.
The problem is that we do not know $\E A$. Instead, we know the adjacency matrix $A$. If, by some chance, 
$A$ is not far from $\E A$, we may hope to use the $A$ to approximately recover the community structure. 
So is it true that $A \approx \E A$?
The answer is yes, and we can prove it using matrix Bernstein's inequality.

\begin{theorem}[Concentration of the stochastic block model]		\label{thm: conc sbm}
  Let $A$ be the adjacency matrix of a $G(n,p,q)$ random graph. Then 
  $$
  \E \|A-\E A\| \lesssim \sqrt{d \log n} + \log n. 
  $$
  Here $d= (p+q)n/2$ is the expected degree.
\end{theorem}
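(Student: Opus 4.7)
The plan is to cast $A - \E A$ as a sum of independent, mean zero, symmetric random matrices of bounded operator norm, and then apply the expectation form of matrix Bernstein's inequality (Corollary~\ref{cor: matrix Bernstein expectation}). Since every off-diagonal entry $A_{ij}$ with $i<j$ is an independent Bernoulli (and $A$ is symmetric with zero diagonal for a simple graph), the natural decomposition is
$$
A - \E A = \sum_{1 \le i < j \le n} Z_{ij} \big( e_i e_j^\tran + e_j e_i^\tran \big), \qquad Z_{ij} := A_{ij} - \E A_{ij}.
$$
Each summand $X_{ij} := Z_{ij}(e_i e_j^\tran + e_j e_i^\tran)$ is an independent, mean-zero, symmetric random matrix, and since $|Z_{ij}| \le 1$ while the rank-two matrix $e_i e_j^\tran + e_j e_i^\tran$ has operator norm $1$, we get $\|X_{ij}\| \le 1$, so we may take $K=1$ in the corollary.

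Next I would compute the matrix variance parameter $\s^2$. A direct calculation using $(e_i e_j^\tran + e_j e_i^\tran)^2 = e_i e_i^\tran + e_j e_j^\tran$ gives
$$
X_{ij}^2 = Z_{ij}^2 \, (e_i e_i^\tran + e_j e_j^\tran),
$$
and therefore
$$
\sum_{i < j} \E X_{ij}^2 = \sum_{i=1}^n \Big( \sum_{j \ne i} \Var(A_{ij}) \Big) \, e_i e_i^\tran.
$$
This is a diagonal matrix, so its operator norm is just the maximum over $i$ of the inner sum. Using $\Var(A_{ij}) \le p$ for pairs in the same community and $\Var(A_{ij}) \le q$ for pairs in different communities, and the fact that each vertex has $n/2-1$ same-community neighbors and $n/2$ cross-community ones, the inner sum is bounded by $(n/2)p + (n/2)q = d$. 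Hence $\s^2 \le d$.

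With $K=1$ and $\s^2 \le d$ in hand, Corollary~\ref{cor: matrix Bernstein expectation} yields exactly
$$
\E \|A - \E A\| \lesssim \s \sqrt{\log n} + K \log n \lesssim \sqrt{d \log n} + \log n,
$$
which is the claimed bound. There is no real obstacle here: the proof is essentially a bookkeeping exercise in assembling the adjacency matrix from rank-two independent pieces and noticing that the matrix variance collapses to a diagonal form whose norm equals the maximum expected degree. The only small subtlety is to keep track of the symmetric $(e_i e_j^\tran + e_j e_i^\tran)$ building blocks (rather than naively using $e_i e_j^\tran$, which is not symmetric) so that matrix Bernstein applies verbatim.
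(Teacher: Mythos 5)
Your proof is correct and follows essentially the same route as the paper: decompose $A - \E A$ into the independent, mean-zero, rank-two symmetric pieces $X_{ij} = (A_{ij}-\E A_{ij})(e_ie_j^\tran + e_je_i^\tran)$ and invoke Corollary~\ref{cor: matrix Bernstein expectation}. You have simply carried out explicitly the computation of $\sigma^2 \le d$ and $K \le 1$ that the paper leaves as an exercise, and the details are right (including the observation that $\sum_{i<j}\E X_{ij}^2$ is diagonal with entries bounded by $d$).
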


\begin{proof}
Let us sketch the argument. To use matrix Bernstein's inequality, 
let us break $A$ into a sum of independent random matrices
$$
A = \sum_{i,j: \; i \le j} X_{ij},
$$
where each matrix $X_{ij}$ contains a pair of symmetric entries of $A$, or one diagonal 
entry.\footnote{Precisely, if $i \ne j$, then $X_{ij}$ has all zero entries 
  except the $(i,j)$ and $(j,i)$ entries that can potentially equal $1$. If $i=j$, the only non-zero entry of $X_{ij}$ 
  is the $(i,i)$.}
Matrix Bernstein's inequality obviously applies for the sum 
$$
A-\E A = \sum_{i \le j} (X_{ij} - \E X_{ij}).
$$
Corollary~\ref{cor: matrix Bernstein expectation} gives\footnote{We will liberally use the notation $\lesssim$ 
  to hide constant factors appearing in the inequalities. Thus, $a \lesssim b$ means that 
  $a \le Cb$ for some constant $C$.}
\begin{equation}         \label{eq: conc almost}
\E \|A-\E A\| \lesssim \s \sqrt{\log n} + K \log n
\end{equation}
where $\s^2 = \big\| \sum_{i \le j} \E (X_{ij}-\E X_{ij})^2 \big\|$ and $K = \max_{ij} \|X_{ij} - \E X_{ij}\|$.
It is a good exercise to check that 
$$
\s^2 \lesssim d \quad \text{and} \quad K \le 2.
$$
(Do it!) 
Substituting into \eqref{eq: conc almost}, we complete the proof.
\end{proof}

How useful is Theorem~\ref{thm: conc sbm} for community recovery?
Suppose that the network is not too sparse, namely
$$
d \gg \log n.
$$
Then 
$$
\|A - \E A\| \lesssim \sqrt{d \log n} \quad \text{while} \quad \|\E A\| = \l_1(\E A) = d,
$$
which implies that 
$$
\|A - \E A\| \ll \|\E A\|.
$$
In other words, $A$ nicely approximates $\E A$: the relative error or approximation 
is small in the operator norm. 

At this point one can apply classical results from the {\em perturbation theory} for matrices, 
which state that since $A$ and $\E A$ are close, their eigenvalues and eigenvectors 
must also be close. The relevant perturbation results are {\em Weyl's inequality} for eigenvalues  
and {\em Davis-Kahan's inequality} for eigenvectors, which we will not reproduce here.
Heuristically, what they give us is 
$$
v_2(A) \approx v_2(\E A) =   
\left[\begin{array}{r}
    1 \\ 1 \\ \hline -1 \\ -1
  \end{array}\right].
$$
Then we should expect that most of the coefficients of $v_2(A)$ are positive 
on one community and negative on the other. 
So we can use $v_2(A)$ to approximately recover the communities. This method is called 
{\em spectral clustering}:

\medskip

{\bf Spectral Clustering Algorithm.}
{\em Compute $v_2(A)$, the eigenvector corresponding to the second largest 
  eigenvalue of the adjacency matrix $A$ of the network. 
  Use the signs of the coefficients of $v_2(A)$ to predict the community membership
  of the vertices.}
  
\medskip

We saw that spectral clustering should perform well for the stochastic block model 
$G(n,p,q)$ if it is not too sparse, namely if the expected degrees satisfy
$d = (p+q)n/2 \gg \log n$.

A more careful analysis along these lines, which you should be able to do yourself with some work,
leads to the following more rigorous result. 

\begin{theorem}[Guarantees of spectral clustering]		\label{thm: spectral clustering}
  Consider a random graph generated according to the stochastic block model $G(n,p,q)$
  with $p>q$, and set $a = pn$, $b = qn$. Suppose that 
  \begin{equation}         \label{eq: spectral clustering}
  (a-b)^2 \gg \log(n) (a+b).
  \end{equation}
  Then, with high probability, the spectral clustering algorithm recovers the 
  communities up to $o(n)$ misclassified vertices. 
\end{theorem}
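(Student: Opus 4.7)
My plan is to combine three ingredients: the spectral concentration from Theorem~\ref{thm: conc sbm}, the Davis--Kahan perturbation inequality for eigenvectors, and a simple counting argument that converts $\ell_2$-closeness of the second eigenvector into a bound on the number of misclassified vertices.

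First, I note that the hypothesis \eqref{eq: spectral clustering} implies $a+b \gg \log n$ (since $(a-b)^2 \le (a+b)^2$), so that the expected degree $d = (a+b)/2$ itself satisfies $d \gg \log n$. In particular the sub-exponential term in the conclusion of Theorem~\ref{thm: conc sbm} is absorbed by the sub-gaussian term, and (working with the tail form of matrix Bernstein's inequality to obtain a high-probability event rather than a bound in expectation) one gets
$$
\|A - \E A\| \;\lesssim\; \sqrt{(a+b)\log n}
$$
with probability $1-o(1)$.

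Next, recall from \eqref{eq: eigs D} that $\E A$ has rank two, with nonzero eigenvalues $\lambda_1(\E A) = (a+b)/2$ and $\lambda_2(\E A) = (a-b)/2$, and that after normalization the second eigenvector $v := v_2(\E A)$ has coordinates $\pm 1/\sqrt{n}$ whose signs encode community membership. The eigenvalue $\lambda_2(\E A)$ is separated from the zero part of the spectrum of $\E A$ by a gap of $(a-b)/2$, and applying Davis--Kahan yields, after choosing the sign of $\widehat v := v_2(A)$ appropriately,
$$
\|\widehat v - v\|_2 \;\lesssim\; \frac{\|A - \E A\|}{a-b} \;\lesssim\; \frac{\sqrt{(a+b)\log n}}{a-b}.
$$

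Finally, I count misclassifications. The spectral algorithm misclassifies vertex $i$ precisely when $\widehat v_i$ has the opposite sign to $v_i$; since $|v_i| = 1/\sqrt{n}$, such a sign disagreement forces $(\widehat v_i - v_i)^2 \ge 1/n$. Hence if $m$ denotes the number of misclassified vertices,
$$
\frac{m}{n} \;\le\; \|\widehat v - v\|_2^2 \;\lesssim\; \frac{(a+b)\log n}{(a-b)^2},
$$
and the right-hand side is $o(1)$ by \eqref{eq: spectral clustering}, giving $m = o(n)$.

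The main obstacle, as the text hints, is the Davis--Kahan step: one must resolve the sign ambiguity inherent in defining $v_2(A)$, and, more seriously, one has to use the variant of Davis--Kahan that guarantees closeness of the top-$2$ eigenspaces (where the relevant gap is indeed $\lambda_2(\E A) - 0 = (a-b)/2$) and then argue, using the fact that $v_1(\E A) = \mathbf{1}/\sqrt{n}$ is very well separated from $v_2(\E A)$ and that $v_1(A)$ remains close to $\mathbf{1}/\sqrt{n}$, that $v_2(A)$ itself -- not merely some rotation within the top-$2$ eigenspace of $A$ -- is close to $v_2(\E A)$. Once this is handled, the concentration step and the sign-counting step are routine.
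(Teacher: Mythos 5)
Your proof follows exactly the roadmap the paper sketches (concentration from Theorem~\ref{thm: conc sbm}, then eigenvector perturbation, then a sign-counting argument), and the concentration and counting steps are carried out correctly: the observation that \eqref{eq: spectral clustering} forces $d = (a+b)/2 \gg \log n$, so the sub-exponential term in the concentration bound is absorbed, is right, and the conversion from $\ell_2$ closeness of the normalized second eigenvector to a misclassification count via $(\widehat v_i - v_i)^2 \ge 1/n$ for each sign error is exactly what one wants.

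The gap is in the Davis--Kahan step, and it is a real one that your final paragraph only half-resolves. The bound
$$
\|\widehat v - v\|_2 \;\lesssim\; \frac{\|A-\E A\|}{a-b}
$$
would follow from the single-eigenvector Davis--Kahan theorem only if the \emph{two-sided} spectral gap around $\lambda_2(\E A)$ is of order $a-b$; but that gap is $\min\bigl(\lambda_1(\E A) - \lambda_2(\E A),\ \lambda_2(\E A) - 0\bigr) = \min\bigl(b,\ (a-b)/2\bigr)$, and the hypothesis \eqref{eq: spectral clustering} controls only $a-b$, not $b$. For example $q=0$ (so $b=0$, two disconnected Erd\H{o}s--R\'enyi blocks) satisfies \eqref{eq: spectral clustering} whenever $a \gg \log n$, and there $v_2(A)$ is essentially an indicator vector of one block, not close to $v_2(\E A)$, so the claimed estimate fails. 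Your proposed remedy --- pass to the top-$2$ eigenspace version of Davis--Kahan (which uses only the gap $(a-b)/2$ and is fine) and then argue that $v_1(A)$ is close to $\mathbf{1}/\sqrt{n}$ --- runs into exactly the same obstruction: separating $v_1(A)$ from $v_2(A)$ inside the top-$2$ eigenspace is again governed by the gap $\lambda_1(\E A)-\lambda_2(\E A)=b$, which you have not lower-bounded. A more robust way to isolate the ``correct'' direction is to invoke Perron--Frobenius: on the high-probability event that the graph is connected, $v_1(A)$ has all nonnegative entries, which forces its projection onto $\mathrm{span}\{v_1(\E A), v_2(\E A)\}$ to tilt toward the constant vector $v_1(\E A)$, and by orthogonality $v_2(A)$ must then tilt toward $v_2(\E A)$; combined with the top-$2$ eigenspace Davis--Kahan this recovers the sign structure on all but $o(n)$ coordinates. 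Even so, the boundary case where $v_1(A)$ is (nearly) an indicator of a single community needs explicit handling, and a fully rigorous version of the theorem either requires that extra care or an implicit lower bound on $b$.

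In short: your architecture matches the paper's, your concentration and counting steps are correct, but the Davis--Kahan estimate as written is not valid over the full range of parameters allowed by \eqref{eq: spectral clustering}, and the fix you gesture at (closeness of $v_1(A)$ to $\mathbf{1}/\sqrt{n}$) depends on the same unavailable gap. Replacing it with the Perron--Frobenius orientation argument is the missing idea.
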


Note that condition \eqref{eq: spectral clustering} implies that the expected degrees are not too small, 
namely $d = (a+b)/2 \gg \log(n)$ (check!). It also ensures that $a$ and $b$ are sufficiently different:
recall that if $a=b$ the network is Erd\"os-R\'enyi graph without any community structure. 

\subsection{Notes}

The idea to extend concentration inequalities like Bernstein's to matrices 
goes back to R.~Ahlswede and A.~Winter \cite{Ahlswede-Winter}. 
They used Golden-Thompson inequality \eqref{eq: Golden-Thompson} and proved a slightly weaker form 
of matrix Bernstein's inequality than we gave in Section~\ref{s: matrix Bernstein sub}.
R.~Oliveira \cite{Oliveira1, Oliveira2} found a way to improve this argument and gave a result 
similar to Theorem~\ref{thm: matrix Bernstein}. 
The version of matrix Bernstein's inequality we gave here (Theorem~\ref{thm: matrix Bernstein}) 
and a proof based on Lieb's inequality is due to J.~Tropp \cite{Tropp}. 

The survey \cite{Tropp-book} contains a comprehensive introduction of matrix calculus, 
a proof of Lieb's inequality (Theorem~\ref{thm: Lieb}), a detailed proof of matrix Bernstein's inequality
(Theorem~\ref{thm: matrix Bernstein}) and a variety of applications. 
A proof of Golden-Thompson inequality \eqref{eq: Golden-Thompson} can be found in 
\cite[Theorem~9.3.7]{Bhatia}. 

In Section~\ref{s: networks} we scratched the surface of an interdisciplinary area of {\em network analysis}. 
For a systematic introduction into networks, refer to the book \cite{Newman}.
Stochastic block models (Definition~\ref{def: SBM}) were introduced in \cite{HLL}.
The community recovery problem in stochastic block models, sometimes also called community detection problem, 
has been in the spotlight in the last few years. A vast and still growing body of literature exists on algorithms
and theoretical results for community recovery, see the book \cite{Newman}, the survey \cite{Fortunato-Hric}, 
papers such as \cite{Zhou-Zhang, Mossel-Neeman-Sly, Hajek-Wu-Xu, Bordenave-LM, LLV, Guedon-V, JMR}
and the references therein.

A concentration result similar to Theorem~\ref{thm: conc sbm} can be found in \cite{Oliveira1};
the argument there is also based on matrix concentration. This theorem is not quite optimal.
For dense networks, where the expected degree $d$ satisfies $d \gtrsim \log n$, the concentration inequality 
in Theorem~\ref{thm: conc sbm} can be improved to 
\begin{equation}         \label{eq: concentration improved}
\E \|A-\E A\| \lesssim \sqrt{d}.
\end{equation}
This improved bound goes back to the original paper \cite{Feige-Ofek} which studies the simpler Erd\"os-R\'enyi model 
but the results extend to stochastic block models \cite{Chin-Rao-Vu}; 
it can also be deduced from \cite{Bandeira-van-Handel, Hajek-Wu-Xu, LLV}.

If the network is relatively dense, i.e.  $d \gtrsim \log n$, 
one can improve the guarantee \eqref{eq: spectral clustering} of spectral clustering 
in Theorem~\ref{thm: spectral clustering} to 
$$
(a-b)^2 \gg (a+b).
$$
All one has to do is use the improved concentration inequality \eqref{eq: concentration improved}
instead of Theorem~\ref{thm: conc sbm}. Furthermore, in this case there exist algorithms that can 
recover the communities {\em exactly}, i.e. without any misclassified vertices, and with high probability, 
see e.g. \cite{McSherry, Hajek-Wu-Xu, Abbe-Bandeira-Hall, Chin-Rao-Vu}.

For sparser networks, where $d \ll \log n$ and possibly even $d = O(1)$, relativelyfew algorithms 
were known until recently, but now there exist many approaches that provably recover communities
in sparse stochastic block models, see e.g. \cite{Mossel-Neeman-Sly, Bordenave-LM, Chin-Rao-Vu, 
LLV, Guedon-V, JMR, Zhou-Zhang}.

\section{Lecture 3: Covariance estimation and matrix completion}	

In the last lecture, we proved matrix Bernstein's inequality and gave an application 
for network analysis. We will spend this lecture discussing a couple of other interesting applications
of matrix Bernstein's inequality. In Section~\ref{s: covariance general} we will work on 
{\em covariance estimation}, a basic problem in high-dimensional statistics. 
In Section~\ref{s: norms matrices}, we will derive a useful bound on norms of random matrices, 
which unlike Bernstein's inequality does not require any boundedness assumptions on the distribution. 
We will apply this bound in Section~\ref{s: matrix completion}
for a problem of {\em matrix completion}, where we are shown a small sample of the entries 
of a matrix and asked to guess the missing entries.

\subsection{Covariance estimation}				\label{s: covariance general}
 
Covariance estimation is a problem of fundamental importance in high-dimensional 
statistics. Suppose we have a sample of data points $X_1,\ldots, X_N$ in $\R^n$. 
It is often reasonable to assume that these points are independently 
sampled from the same probability distribution (or ``population'') which is unknown. 
We would like to learn something useful about this distribution. 

Denote by $X$ a random vector that has this (unknown) distribution. 
The most basic parameter of the distribution is the {\em mean} $\E X$. 
One can estimate $\E X$ from the sample by computing 
the {\em sample mean} $\frac{1}{N} \sum_{i=1}^N X_i$. The law of large numbers guarantees
that the estimate becomes tight as the sample size $N$ grows to infinity, i.e. 
$$
\frac{1}{N} \sum_{i=1}^N X_i \to \E X \quad \text{as } N \to \infty.
$$

The next most basic parameter of the distribution is the {\em covariance matrix} 
$$
\Sigma := \E (X-\E X)(X-\E X)^\tran.
$$
This is a higher-dimensional version of the usual notion 
of {\em variance} of a random variable $Z$, which is 
$$
\Var(Z) = \E (Z-\E Z)^2.
$$
The eigenvectors of the covariance matrix of $\Sigma$ are called the {\em principal components}.
Principal components that correspond to large eigenvalues of $\Sigma$ 
are the directions in which the distribution of $X$ is most extended, see Figure~\ref{fig: PCA}. 
These are often the most interesting 
directions in the data. Practitioners often visualize the high-dimensional data by projecting it 
onto the span of a few (maybe two or three) of such principal components; the projection may 
reveal some hidden structure of the data. This method is called {\em Principal Component Analysis} (PCA).

\begin{figure}[htp]			
  \centering \includegraphics[width=0.45\textwidth]{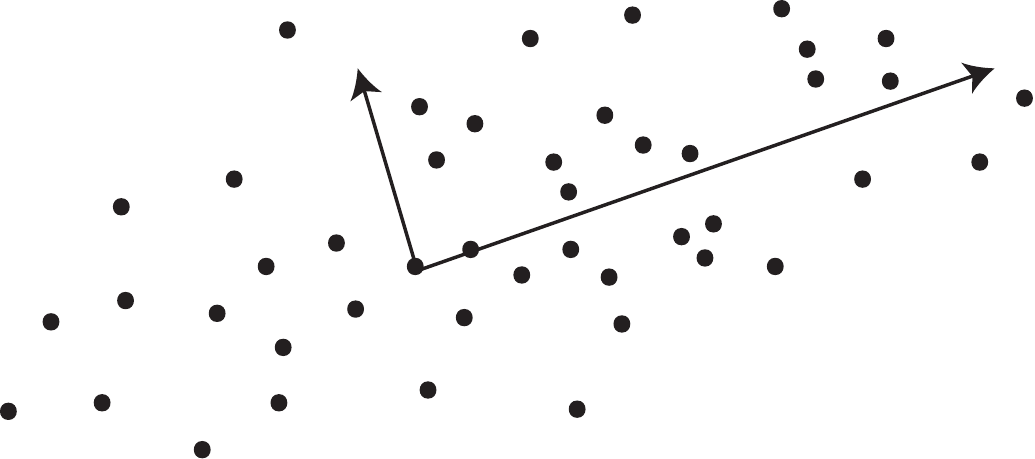} 
  \caption{Data points $X_1,\ldots, X_N$ sampled from a distribution in $\R^n$ and the principal 
    components of the covariance matrix.}
  \label{fig: PCA}	
\end{figure}

One can estimate the covariance matrix $\Sigma$ from the sample by computing 
the {\em sample covariance} 
$$
\Sigma_N := \frac{1}{N} \sum_{i=1}^N (X_i-\E X_i)(X_i - \E X_i)^\tran.
$$
Again, the law of large numbers guarantees
that the estimate becomes tight as the sample size $N$ grows to infinity, i.e. 
$$
\Sigma_N \to \Sigma \quad \text{as } N \to \infty.
$$

But how large should the sample size $N$ be for covariance estimation? 
Generally, one can not have $N < n$ for dimension reasons. (Why?)
We are going to show that 
$$
N \sim n \log  n
$$
is enough. In other words, covariance estimation is possible with just {\em logarithmic oversampling}.

For simplicity, we shall state the covariance estimation bound for {\em mean zero} distributions. 
(If the mean is not zero, we can estimate it from the sample and subtract. 
Check that the mean can be accurately estimated from a sample of size $N = O(n)$.)

\begin{theorem}[Covariance estimation]			\label{thm: covariance estimation general}
  Let $X$ be a random vector in $\R^n$ with covariance matrix $\Sigma$.
  Suppose that 
  \begin{equation}         \label{eq: distribution bounded}
  \|X\|_2^2 \lesssim \E \|X\|_2^2 = \tr \Sigma \quad \text{almost surely}.
  \end{equation}
  Then, for every $N \ge 1$, we have
  $$
  \E \|\Sigma_N - \Sigma\| \lesssim \|\Sigma\| \; \Big( \sqrt{\frac{n \log n}{N}} + \frac{n \log n}{N} \Big).
  $$
\end{theorem}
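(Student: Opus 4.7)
The plan is to write $\Sigma_N - \Sigma = \frac{1}{N}\sum_{i=1}^N Z_i$ where $Z_i := X_i X_i^\tran - \Sigma$ are i.i.d., symmetric, mean-zero $n\times n$ random matrices, and then apply the expectation form of matrix Bernstein's inequality (Corollary~\ref{cor: matrix Bernstein expectation}) to $\sum Z_i$. After that, everything reduces to controlling the two quantities that feed into Bernstein: the almost-sure bound $K$ on $\|Z_i\|$ and the matrix variance parameter $\sigma^2 = \|\sum_i \E Z_i^2\|$.

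For the boundedness step, note that $\|X_i X_i^\tran\| = \|X_i\|_2^2 \lesssim \tr\Sigma$ by the hypothesis \eqref{eq: distribution bounded}, and $\|\Sigma\| \le \tr \Sigma$, so $\|Z_i\| \lesssim \tr\Sigma$ almost surely; take $K \lesssim \tr\Sigma$. For the variance step, expand
$$
\E Z_i^2 = \E (X_i X_i^\tran)^2 - \Sigma^2 \preceq \E\bigl[\|X_i\|_2^2 \, X_i X_i^\tran\bigr] \lesssim (\tr\Sigma)\, \E X_i X_i^\tran = (\tr\Sigma)\,\Sigma,
$$
using $(X_i X_i^\tran)^2 = \|X_i\|_2^2\, X_i X_i^\tran$, the hypothesis, and dropping the positive semidefinite $\Sigma^2$. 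Summing $N$ identical terms yields $\sigma^2 \lesssim N (\tr\Sigma)\|\Sigma\|$.

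Plugging $K$ and $\sigma$ into Corollary~\ref{cor: matrix Bernstein expectation} gives
$$
\E \Big\| \sum_{i=1}^N Z_i \Big\| \lesssim \sqrt{N (\tr\Sigma)\|\Sigma\| \log n} + (\tr\Sigma) \log n,
$$
and dividing by $N$ together with the trivial bound $\tr\Sigma \le n\|\Sigma\|$ produces exactly the claimed inequality. The only genuinely delicate step is the variance computation $\E(X_iX_i^\tran)^2 \preceq (\tr\Sigma)\,\Sigma$; everything else is a bookkeeping exercise. The appearance of $\|\Sigma\|$ as the prefactor (rather than the much larger $\tr\Sigma$) comes from the fact that one $\tr\Sigma$ factor inside the square root gets traded for $n\|\Sigma\|$, leaving a clean $\|\Sigma\|\sqrt{n\log n / N}$ sub-Gaussian term and a $\|\Sigma\|\, n\log n / N$ sub-exponential term, which is what produces the ``logarithmic oversampling'' phenomenon advertised before the theorem.
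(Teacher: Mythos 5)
Your proof is correct and follows essentially the same path as the paper's: decompose $\Sigma_N-\Sigma$ into the sum of $X_iX_i^\tran - \Sigma$, apply Corollary~\ref{cor: matrix Bernstein expectation}, bound $K\lesssim\tr\Sigma$ and $\sigma^2\lesssim N(\tr\Sigma)\|\Sigma\|$ via the rank-one identity $(XX^\tran)^2=\|X\|_2^2\,XX^\tran$ and the hypothesis \eqref{eq: distribution bounded}, then finish with $\tr\Sigma\le n\|\Sigma\|$. The variance computation and the boundedness bound are identical to the paper's, so there is nothing to add.
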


Before we pass to the proof, let us note that Theorem~\ref{thm: covariance estimation general} 
yields the covariance estimation result we promised. 
Let $\e \in (0,1)$. If we take a sample of size 
$$
N \sim \e^{-2} n \log n,
$$
then we are guaranteed covariance estimation with a good relative error:
$$
\E \|\Sigma_N - \Sigma\| \le \e \|\Sigma\|.
$$

\begin{proof}
Apply matrix Bernstein's inequality (Corollary~\ref{cor: matrix Bernstein expectation}) 
for the sum of independent random matrices $X_i X_i^\tran - \Sigma$ 
and get 
\begin{equation}         \label{eq: covariance prelim}
\E \|\Sigma_N - \Sigma\| 
= \frac{1}{N} \E \Big\| \sum_{i=1}^N (X_i X_i^\tran - \Sigma) \Big\|
\lesssim \frac{1}{N} \big( \s \sqrt{\log n} + K \log n \big)
\end{equation}
where 
$$
\s^2 = \Big\| \sum_{i=1}^N \E(X_i X_i^\tran - \Sigma)^2 \Big\|
= N \big\| \E(XX^\tran - \Sigma)^2 \big\|
$$
and $K$ is chosen so that 
$$
\|XX^\tran - \Sigma\| \le K \quad \text{almost surely}.
$$

It remains to bound $\s$ and $K$. Let us start with $\sigma$. We have 
\begin{align*}
\E(XX^\tran- \Sigma)^2 
  &= \E \|X\|_2^2 XX^\tran - \Sigma^2 \quad \text{(check by expanding the square)} \\
  &\precsim \tr(\Sigma) \cdot \E XX^\tran 
  	\quad \text{(drop $\Sigma^2$ and use \eqref{eq: distribution bounded})} \\
  &= \tr(\Sigma) \cdot \Sigma.
\end{align*}
Thus 
$$
\s^2 \lesssim N \tr(\Sigma) \|\Sigma\|.
$$

Next, to bound $K$, we have
\begin{align*}
\|XX^\tran - \Sigma\| 
  &\le \|X\|_2^2 + \|\Sigma\| \quad \text{(by triangle inequality)} \\
  &\lesssim \tr \Sigma + \|\Sigma\| \quad \text{(using \eqref{eq: distribution bounded})} \\
  &\le 2 \tr \Sigma =: K.
\end{align*}

Substitute the bounds on $\sigma$ and $K$ into \eqref{eq: covariance prelim} 
and get
$$
\E \|\Sigma_N - \Sigma\|
\lesssim \frac{1}{N} \big( \sqrt{N \tr(\Sigma) \|\Sigma\| \log n} + \tr(\Sigma) \log n \big)
$$
To complete the proof, use that $\tr \Sigma \le n \|\Sigma\|$ (check this!) 
and simplify the bound. 
\end{proof}

\begin{remark}[Low-dimensional distributions]
  Much fewer samples are needed for covariance estimation for low-dimensional, 
  or approximately low-dimensional, distributions. To measure approximate low-dimensionality 
  we can use the notion of the {\em stable rank} of $\Sigma^2$. The stable rank of 
  a matrix $A$ is defined as the square of the ratio of the Frobenius to operator 
  norms:\footnote{The Frobenius norm of an $n \times m$ matrix, sometimes also called the {\em Hilbert-Schmidt norm}, 
    is defined as $\|A\|_F = (\sum_{i=1}^n \sum_{j=1}^m A_{ij}^2)^{1/2}$. Equivalently, for an $n \times n$
    symmetric matrix, $\|A\|_F = (\sum_{i=1}^n \l_i(A)^2)^{1/2}$, where $\l_i(A)$ are the eigenvalues of $A$. 
    Thus the stable rank of $A$ can be expressed as $r(A) = \sum_{i=1}^n \l_i(A)^2 / \max_i \l_i(A)^2$.} 
  $$
  r(A) := \frac{\|A\|_F^2}{\|A\|^2}.
  $$
  The stable rank is always bounded by the usual, linear algebraic rank,
  $$
  r(A) \le \rank(A),
  $$
  and it can be much smaller. (Check both claims.) 
  
  Our proof of Theorem~\ref{thm: covariance estimation general} actually gives
  $$
  \E \|\Sigma_N - \Sigma\| \le \|\Sigma\| \; \Big( \sqrt{\frac{r \log n}{N}} + \frac{r \log n}{N} \Big).
  $$
  where 
  $$
  r = r(\Sigma^{1/2}) = \frac{\tr \Sigma}{\|\Sigma\|}.
  $$
  (Check this!) Therefore, covariance estimation is possible with 
  $$
  N \sim r \log n
  $$
  samples. 
\end{remark}

\begin{remark}[The boundedness condition]
  It is a good exercise to check that if we remove the boundedness condition \eqref{eq: distribution bounded}, 
  a nontrivial covariance estimation is impossible in general. (Show this!)
  But how do we know whether the boundedness condition holds for data at hand? We may not, but we can enforce
  this condition by {\em truncation}. All we have to do is to discard $1\%$ of data points
  with largest norms. (Check this accurately, assuming that such truncation does not change 
  the covariance significantly.)
\end{remark}


\subsection{Norms of random matrices}			\label{s: norms matrices}

We have worked a lot with the {\em operator norm} of matrices, denoted $\|A\|$.
One may ask if is there exists a formula that expresses $\|A\|$ in terms of the entires $A_{ij}$.
Unfortunately, there is no such formula. The operator norm is a more difficult quantity in this respect
than the {\em Frobenius norm}, which as we know can be easily expressed in terms of entries:
$\|A\|_F = (\sum_{i,j} A_{ij}^2)^{1/2}$.

If we can not express $\|A\|$ in terms of the entires, can we at least get a good estimate?
Let us consider $n \times n$ symmetric matrices for simplicity. In one direction,
$\|A\|$ is always bounded {\em below} by the largest Euclidean norm 
of the rows $A_i$: 
\begin{equation}         \label{eq: norm lower}
\|A\| \ge \max_i \|A_i\|_2 = \max_i \Big( \sum_j A_{ij}^2 \Big)^{1/2}.
\end{equation}
(Check!)
Unfortunately, this bound is sometimes very loose, and the best possible upper bound is 
\begin{equation}         \label{eq: norm upper}
\|A\| \le \sqrt{n} \cdot  \max_i \|A_i\|_2.
\end{equation}
(Show this bound, and give an example where it is sharp.)

Fortunately, for {\em random} matrices with independent entries 
the bound \eqref{eq: norm upper} can be improved to the point
where the upper and lower bounds almost match.

\begin{theorem}[Norms of random matrices without boundedness assumptions]	 \label{thm: matrix non-bdd}
  Let $A$ be an $n \times n$ symmetric random matrix whose entries
  on and above the diagonal are independent, mean zero random variables. Then 
  $$
  \E \max_i \|A_i\|_2 \le \E \|A\| \le C \log n \cdot \E \max_i \|A_i\|_2,
  $$
  where $A_i$ denote the rows of $A$. 
\end{theorem}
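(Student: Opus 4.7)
The lower bound is immediate: for the standard basis vector $e_i$ we have $\|A e_i\|_2 = \|A_i\|_2$ since $A$ is symmetric, so $\|A\| \ge \max_i \|A_i\|_2$, and taking expectations yields the left inequality. The interesting direction is the upper bound. The natural attempt is to decompose $A = \sum_{i \le j} X_{ij}$, where $X_{ij}$ is the symmetric matrix carrying the $(i,j)$ and $(j,i)$ entries of $A$ (or just the $(i,i)$ entry when $i=j$), and then to invoke Corollary~\ref{cor: matrix Bernstein expectation}. The obstacle is that Bernstein requires an almost sure bound $\|X_{ij}\| = |A_{ij}| \le K$, which we do not have: the entries of $A$ may be unbounded. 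The main idea is to circumvent this by a \emph{symmetrization trick} that transfers the unbounded randomness out of the way so that one can apply matrix Bernstein \emph{conditionally}, with $K$ becoming a function of the realized entries that is controlled by $\max_i\|A_i\|_2$.

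Concretely, I let $A'$ be an independent copy of $A$. Since $\E A' = 0$, Jensen's inequality gives $\E\|A\| \le \E\|A - A'\|$. The summands $X_{ij} - X'_{ij}$ are symmetric in distribution, so inserting independent Rademacher signs $\varepsilon_{ij}$ does not change the law and a triangle inequality yields
$$\E\|A\| \;\le\; \E\Big\|\sum_{i\le j}\varepsilon_{ij}(X_{ij}-X'_{ij})\Big\| \;\le\; 2\,\E\Big\|\sum_{i\le j}\varepsilon_{ij} X_{ij}\Big\|.$$
Next I condition on $A$ and apply matrix Bernstein to the Rademacher sum. The conditional summands $\varepsilon_{ij} X_{ij}$ are independent, mean zero, and now satisfy the uniform bound $\|\varepsilon_{ij}X_{ij}\| = |A_{ij}|$, while the matrix variance parameter is
$$\Big\|\sum_{i\le j}\E_\varepsilon(\varepsilon_{ij}X_{ij})^2\Big\| \;=\; \Big\|\sum_{i\le j} X_{ij}^2\Big\| \;=\; \max_k \|A_k\|_2^2,$$
since a direct computation shows $\sum_{i\le j} X_{ij}^2$ is diagonal with $(k,k)$-entry equal to $\sum_j A_{kj}^2 = \|A_k\|_2^2$. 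By Corollary~\ref{cor: matrix Bernstein expectation} applied conditionally,
$$\E\Big[\Big\|\sum_{i\le j}\varepsilon_{ij}X_{ij}\Big\|\;\Big|\;A\Big] \;\lesssim\; \max_k\|A_k\|_2\sqrt{\log n} \;+\; \max_{ij}|A_{ij}|\,\log n.$$
Since $\max_{ij}|A_{ij}| \le \max_k\|A_k\|_2$, both terms are dominated by $\log n \cdot \max_k\|A_k\|_2$. Taking full expectation completes the proof.

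The whole argument is short once one sees the symmetrization move; the hard part conceptually is recognizing that matrix Bernstein, which is inherently a result for bounded summands, can be applied to an unbounded sum by conditioning on the randomness that produces the unboundedness and running Bernstein only over the signs. The $\log n$ factor in the upper bound comes from the $\log n$ factors already present in Corollary~\ref{cor: matrix Bernstein expectation}, and it is the same $\log n$ factor that appears in the statement.
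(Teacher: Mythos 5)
Your proposal is correct and follows essentially the same route as the paper: decompose $A$ into the matrices $X_{ij}$, symmetrize (the paper isolates this as Lemma~\ref{lem: symmetrization}), condition on $A$, and apply Corollary~\ref{cor: matrix Bernstein expectation} to the Rademacher sum, controlling $\sigma$ and $K$ by $\max_i\|A_i\|_2$. Your explicit computation that $\sum_{i\le j}X_{ij}^2$ is diagonal with $(k,k)$ entry $\|A_k\|_2^2$ is exactly the exercise the paper leaves to the reader, and your observation that $K=\max_{ij}|A_{ij}|$ even yields the slightly sharper bound mentioned in the paper's Notes section.
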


In words, the operator norm of a random matrix is almost determined by the norm of the rows.

Our proof of this result will be based on matrix Bernstein's inequality -- more precisely, 
Corollary~\ref{cor: matrix Bernstein expectation}.
There is one surprising point. How can we use matrix Bernstein's inequality, 
which applies only for bounded distributions, to prove a result like 
Theorem~\ref{thm: matrix non-bdd} that does not have any boundedness assumptions?
We will do this using a trick based on conditioning and symmetrization. Let us introduce this technique first. 

\begin{lemma}[Symmetrization]	\index{Symmetrization}			\label{lem: symmetrization}
  Let $X_1,\ldots,X_N$ be independent, mean zero random vectors in a normed space
  and $\e_1, \ldots, \e_N$ be independent Rademacher random variables.\footnote{This means that 
  random variables $\e_i$ take values $\pm 1$ with probability $1/2$ each. We require that 
  all random variables we consider here, i.e. $\{X_i$, $\e_i: \; i=1,\ldots,N\}$ are jointly independent.}
  Then 
  $$
  \frac{1}{2} \E \Big\| \sum_{i=1}^N \e_i X_i \Big\| 
  \le \E \Big\| \sum_{i=1}^N X_i \Big\|
  \le 2 \E \Big\| \sum_{i=1}^N \e_i X_i \Big\|.
  $$
\end{lemma}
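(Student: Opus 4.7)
The plan is to prove both inequalities by the classical \emph{symmetrization-by-decoupling} trick: introduce an independent copy of the sequence and exploit the fact that the difference is a symmetric random vector, so inserting signs $\e_i$ does not change its distribution. Jensen's inequality for the convex function $\|\cdot\|$ will do the rest.

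First I would take an independent copy $X_1',\ldots,X_N'$ of the sequence $X_1,\ldots,X_N$, defined on the same probability space and independent of everything else (in particular of the $\e_i$). Since $\E X_i' = 0$, I can write $X_i = X_i - \E' X_i'$, where $\E'$ denotes expectation with respect to the primed variables. Pulling $\E'$ outside the norm via Jensen's inequality yields
$$
\E \Big\| \sum_{i=1}^N X_i \Big\|
\;=\; \E \Big\| \sum_{i=1}^N \bigl(X_i - \E' X_i'\bigr) \Big\|
\;\le\; \E \Big\| \sum_{i=1}^N (X_i - X_i') \Big\|.
$$
Now for each $i$ the random vector $X_i - X_i'$ is symmetric, i.e.\ has the same distribution as $-(X_i - X_i')$, because $X_i$ and $X_i'$ are i.i.d. Hence the joint distribution of $(X_i - X_i')_{i=1}^N$ equals that of $(\e_i(X_i - X_i'))_{i=1}^N$ for any independent Rademacher signs $\e_i$. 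Therefore
$$
\E \Big\| \sum_{i=1}^N (X_i - X_i') \Big\|
\;=\; \E \Big\| \sum_{i=1}^N \e_i (X_i - X_i') \Big\|
\;\le\; \E \Big\| \sum_{i=1}^N \e_i X_i \Big\| + \E \Big\| \sum_{i=1}^N \e_i X_i' \Big\|
\;=\; 2\, \E \Big\| \sum_{i=1}^N \e_i X_i \Big\|,
$$
using the triangle inequality and the fact that the primed and unprimed sums have the same law. This gives the upper bound.

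For the lower bound I would run essentially the same argument in reverse. Start from the signed sum, insert an independent copy via Jensen, then use that $\e_i(X_i - X_i')$ has the same distribution as $X_i - X_i'$:
$$
\E \Big\| \sum_{i=1}^N \e_i X_i \Big\|
\;\le\; \E \Big\| \sum_{i=1}^N \e_i (X_i - X_i') \Big\|
\;=\; \E \Big\| \sum_{i=1}^N (X_i - X_i') \Big\|
\;\le\; 2\, \E \Big\| \sum_{i=1}^N X_i \Big\|,
$$
where the first step uses $\E' X_i' = 0$ and Jensen, and the last step is the triangle inequality together with the fact that the $X_i'$ have the same law as the $X_i$. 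Dividing by $2$ gives the lower bound.

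There is no real obstacle in this argument; the only subtle point is being careful about which variables are being averaged at each step. Concretely, the Jensen step requires pulling the conditional expectation $\E'$ (over the copy) past a norm, and the symmetrization step requires that the $\e_i$ are independent of \emph{both} sequences $\{X_i\}$ and $\{X_i'\}$, so the joint distribution argument goes through. Once those bookkeeping points are handled, both inequalities follow from the same two ingredients — Jensen and the invariance of symmetric random vectors under independent sign flips.
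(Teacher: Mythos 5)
Your proof is correct and follows essentially the same route as the paper: introduce an independent copy, apply Jensen to remove the mean, exploit the symmetry of $X_i - X_i'$ to insert Rademacher signs, and finish with the triangle inequality. The paper gives this argument for the upper bound and leaves the lower bound as an exercise, which you fill in correctly with the expected mirror-image argument.
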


\begin{proof}
To prove the upper bound, let $(X'_i)$ be an independent copy of the random vectors $(X_i)$, 
i.e. just different random vectors with the same joint distribution as $(X_i)$ and independent from $(X_i)$.
Then 
\begin{align*}
\E \Big\| \sum_i X_i \Big\| 
&= \E \Big\| \sum_i X_i - \E \Big( \sum_i X'_i \Big) \Big\| \quad \text{(since $\E \sum_i X'_i = 0$ by assumption)}\\
&\le \E \Big\| \sum_i X_i - \sum_i X'_i \Big\| \quad \text{(by Jensen's inequality)} \\
&= \E \Big\| \sum_i (X_i - X'_i) \Big\|.
\end{align*}

The distribution of the random vectors $Y_i := X_i-X'_i$ is {\em symmetric}, which means that 
the distributions of $Y_i$ and $-Y'_i$ are the same. (Why?)
Thus the distribution of the random vectors $Y_i$ and $\e_i Y_i$ is also the same, for all we do is
change the signs of these vectors at random and independently of the values of the vectors. 
Summarizing, we can replace $X_i - X'_i$ in the sum above with $\e_i(X_i - X'_i)$. Thus
\begin{align*}
\E \Big\| \sum_i X_i \Big\| 
&\le \E \Big\| \sum_i \e_i(X_i - X'_i) \Big\| \\
&\le \E \Big\| \sum_i \e_i X_i \Big\| + \E \Big\| \sum_i \e_i X'_i \Big\| \quad \text{(using triangle inequality)} \\
&= 2 \E \Big\| \sum_i \e_i X_i \Big\| \quad \text{(the two sums have the same distribution)}.
\end{align*}
This proves the upper bound in the symmetrization inequality. 
The lower bound can be proved by a similar argument. (Do this!)
\end{proof}

\begin{proof}[Proof of Theorem~\ref{thm: matrix non-bdd}.]
We already mentioned in \eqref{eq: norm lower} that the bound in Theorem~\ref{thm: matrix non-bdd} is trivial.
The proof of the upper bound will be based on matrix Bernstein's inequality.

First, we decompose $A$ in the same way as we did in the proof of Theorem~\ref{thm: conc sbm}. 
Thus we represent $A$ as a sum of independent, mean zero, symmetric random matrices
$Z_{ij}$ each of which contains a pair of symmetric entries of $A$ (or one diagonal entry): 
$$
A = \sum_{i,j: \; i \le j} Z_{ij}.
$$
Apply the symmetrization inequality (Lemma~\ref{lem: symmetrization}) 
for the random matrices $Z_{ij}$ and get
\begin{equation}         \label{eq: exp A}
\E \|A\| 
= \E \Big\| \sum_{i \le j} Z_{ij} \Big\|
\le 2 \E \Big\| \sum_{i \le j} X_{ij} \Big\|
\end{equation}
where we set
$$
X_{ij} := \e_{ij} Z_{ij}
$$
and $\e_{ij}$ are independent Rademacher random variables. 

Now we condition on $A$. The random variables $Z_{ij}$ become {\em fixed values} 
and all randomness remains in the Rademacher random variables $\e_{ij}$. Note that 
$X_{ij}$ are (conditionally) {\em bounded} almost surely, and this is exactly what 
we have lacked to apply matrix Bernstein's inequality. Now we can do it. 
Corollary~\ref{cor: matrix Bernstein expectation} gives\footnote{We stick a subscript $\e$ 
  to the expected value to remember that this is a conditional expectation, 
  i.e. we average only with respect to $\e_i$.}
\begin{equation}         \label{eq: MBI conditional}
\E_\e \Big\| \sum_{i \le j} X_{ij} \Big\| \lesssim \s \sqrt{\log n} + K \log n,
\end{equation}
where 
$\s^2 = \big\| \sum_{i \le j} \E_\e X_{ij}^2 \big\|$ 
and $K = \max_{i \le j} \|X_{ij}\|$.

A good exercise is to check that  
$$
\s \lesssim \max_i \|A_i\|_2 
\quad \text{and} \quad 
K \lesssim \max_i \|A_i\|_2.
$$
(Do it!)
Substituting into \eqref{eq: MBI conditional}, we get
$$
\E_\e \Big\| \sum_{i \le j} X_{ij} \Big\| \lesssim \log n \cdot \max_i \|A_i\|_2.
$$ 

Finally, we unfix $A$ by taking expectation of both sides of this inequality
with respect to $A$ and using the law of total expectation.
The proof is complete. 
\end{proof}

We stated Theorem~\ref{thm: matrix non-bdd} for symmetric matrices, but it is simple
to extend it to general $m \times n$ random matrices $A$. The bound in this case becomes 
\begin{equation}         \label{eq: matrix non-bdd symmetric}
\E \|A\| \le C \log (m+n) \cdot \big( \E \max_i \|A_i\|_2 + \E \max_j \|A^j\|_2 \big)
\end{equation}
where $A_i$ and $A^j$ denote the rows and columns of $A$.
To see this, apply Theorem~\ref{thm: matrix non-bdd} to the $(m+n) \times (m+n)$ 
symmetric random matrix
$$
\begin{bmatrix}
0 & A \\
A^\tran & 0
\end{bmatrix}.
$$ 
(Do this!)

\subsection{Matrix completion}			\label{s: matrix completion}

Consider a fixed, unknown $n \times n$ matrix $X$. Suppose we are shown 
$m$ randomly chosen entries of $X$. Can we guess all the missing entries? 
This important problem is called {\em matrix completion}. We 
will analyze it using the bounds on the norms on random matrices 
we just obtained.

Obviously, there is no way to guess the missing entries unless we know something
extra about the matrix $X$. So let us assume that $X$ has {\em low rank}:
$$
\rank(X) =: r \ll n.
$$
The number of degrees of freedom of an $n \times n$ matrix with rank $r$ is $O(rn)$.
(Why?)
So we may hope that
\begin{equation}         \label{eq: m ideal}
m \sim rn
\end{equation}
observed entries of $X$ will be enough to determine $X$ completely. But how?

Here we will analyze what is probably the simplest method for matrix completion.  
Take the matrix $Y$ that consists of the observed entries of $X$ while all 
unobserved entries are set to zero. 
Unlike $X$, the matrix $Y$ may not have small rank. Compute the best rank $r$ 
approximation\footnote{The best rank $r$ approximation of an $n \times n$ matrix $A$ is 
  a matrix $B$ of rank $r$ that minimizes the operator norm $\|A-B\|$ 
  or, alternatively, the Frobenius norm $\|A-B\|_F$
  (the minimizer turns out to be the same). One can compute $B$ by truncating 
  the singular value decomposition $A = \sum_{i=1}^n s_i u_i v_i^\tran$ of $A$ as follows: 
  $B = \sum_{i=1}^r s_i u_i v_i^\tran$, where we assume that the singular values $s_i$
  are arranged in non-increasing order.}
of $Y$.
The result, as we will show, will be a good approximation 
to $X$.

But before we show this, let us define sampling of entries more rigorously. 
Assume each entry of $X$ is shown or hidden independently of others with fixed probability $p$. 
Which entries are shown is decided by independent Bernoulli random variables 
$$
\d_{ij} \sim \Ber(p) \quad \text{with} \quad p := \frac{m}{n^2}
$$
which are often called {\em selectors} in this context. The value of $p$ is chosen so that
among $n^2$ entries of $X$, the expected number of selected (known) entries is $m$. 
Define the $n \times n$ matrix $Y$ with entries
$$
Y_{ij} := \d_{ij} X_{ij}.
$$
We can assume that we are shown $Y$, for it is a matrix that contains the observed entries of $X$
while all unobserved entries are replaced with zeros. The following result shows how to 
estimate $X$ based on $Y$.

\begin{theorem}[Matrix completion]			\label{thm: matrix completion}
  Let $\hat{X}$ be a best rank $r$ approximation to $p^{-1} Y$. Then 
  \begin{equation}         \label{eq: matrix completion}
  \E \frac{1}{n} \|\hat{X}-X\|_F \le C \log(n) \sqrt{\frac{rn}{m}} \; \|X\|_\infty,
  \end{equation}
  Here $\|X\|_\infty = \max_{i,j}|X_{ij}|$ denotes the maximum magnitude of the entries of $X$.
\end{theorem}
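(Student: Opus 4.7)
The plan is to reduce the Frobenius-norm error to the operator-norm error of a random matrix, and then bound that operator norm using the tools from Section~\ref{s: norms matrices}. The starting observation is that both $X$ and $\hat X$ have rank at most $r$, so the difference $\hat X - X$ has rank at most $2r$. For any matrix $M$ of rank at most $2r$ we have $\|M\|_F \le \sqrt{2r}\,\|M\|$, since the Frobenius norm is the $\ell_2$-norm of the singular values and only $2r$ of them are nonzero. Combining this with the triangle inequality and the fact (noted in the footnote of the theorem) that $\hat X$ minimizes the operator-norm distance to $p^{-1}Y$ over all rank-$r$ matrices (and $X$ is a legitimate competitor) gives
$$
\|\hat X - X\|_F \le \sqrt{2r}\,\|\hat X - X\| \le \sqrt{2r}\bigl(\|\hat X - p^{-1}Y\| + \|p^{-1}Y - X\|\bigr) \le 2\sqrt{2r}\,\|A\|,
$$
where $A := p^{-1}Y - X$. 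So it suffices to show $\E\|A\| \lesssim \log n\cdot\sqrt{n/p}\,\|X\|_\infty$; substituting $p=m/n^2$ and dividing by $n$ then yields the claimed bound.

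Next I would analyze $A$, whose entries $A_{ij}=(p^{-1}\delta_{ij}-1)X_{ij}$ are independent, mean zero, and bounded by $p^{-1}\|X\|_\infty$ in absolute value. To bound $\E\|A\|$, I would apply the rectangular version of Theorem~\ref{thm: matrix non-bdd} recorded in~\eqref{eq: matrix non-bdd symmetric}, which gives
$$
\E\|A\| \le C\log n\cdot\bigl(\E \max_i \|A_i\|_2 + \E \max_j \|A^j\|_2\bigr).
$$
So the task is reduced to controlling the largest row and column norms of $A$.

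For a fixed row $i$, direct computation using $\Var(p^{-1}\delta_{ij}) = (1-p)/p \le 1/p$ gives
$$
\E \|A_i\|_2^2 \;=\; \sum_j \frac{1-p}{p}\,X_{ij}^2 \;\le\; \frac{n}{p}\|X\|_\infty^2.
$$
To pass from this expectation to a uniform control of $\max_i \|A_i\|_2$, I would apply scalar Bernstein's inequality (Theorem~\ref{thm: Bernstein} in the bounded-summand form~\eqref{eq: Bernstein bounded}) to the sum $\|A_i\|_2^2=\sum_j (p^{-1}\delta_{ij}-1)^2 X_{ij}^2$ of independent bounded random variables. This yields $\|A_i\|_2^2 \lesssim n/p\cdot\|X\|_\infty^2$ with probability at least $1-n^{-c}$; a union bound over $i$ and integration of the tail then give $\E\max_i\|A_i\|_2 \lesssim \sqrt{n/p}\,\|X\|_\infty$. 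The same argument handles the columns.

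Assembling everything, $\E\|A\| \lesssim \log n\cdot\sqrt{n/p}\,\|X\|_\infty$, and hence
$$
\frac{1}{n}\E\|\hat X - X\|_F \;\lesssim\; \frac{\sqrt{r}\log n}{n}\sqrt{\tfrac{n}{p}}\,\|X\|_\infty \;=\; \log n\sqrt{\tfrac{r}{np}}\,\|X\|_\infty \;=\; \log n\sqrt{\tfrac{rn}{m}}\,\|X\|_\infty,
$$
as claimed. The main obstacle I expect is the step controlling $\E\max_i\|A_i\|_2$: bounding $\E\|A_i\|_2^2$ is straightforward, but extracting a comparable bound on the expected maximum requires Bernstein-plus-union-bound bookkeeping, and one must check that the Bernstein deviation terms are absorbed by the mean in the regime where the theorem's bound is non-trivial (essentially $m\gtrsim n\log n$); outside this regime the theorem is vacuous because the right-hand side already exceeds $\|X\|_\infty$.
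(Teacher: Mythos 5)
Your proof is correct and follows essentially the same route as the paper: triangle inequality plus optimality of the rank-$r$ truncation to reduce to bounding $\|p^{-1}Y-X\|$, the rectangular version of Theorem~\ref{thm: matrix non-bdd} to control that operator norm via row/column norms, and Bernstein plus a union bound to pass from per-row expectations to the expected maximum. The only differences are cosmetic: you pass to the Frobenius norm at the start rather than the end, and you work with $p^{-1}Y-X$ rather than $Y-pX$; you also correctly flag the implicit regime $m\gtrsim n\log n$, which the paper uses silently in the Bernstein step.
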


Before we prove this result, let us understand what this bound says about the quality of matrix completion. 
The recovery error is measured in the Frobenius norm, and the left side 
of \eqref{eq: matrix completion} is
$$
\frac{1}{n} \|\hat{X}-X\|_F = \Big( \frac{1}{n^2} \sum_{i,j=1}^n |\hat{X}_{ij} - X_{ij}|^2 \Big)^{1/2}.
$$
Thus Theorem~\ref{thm: matrix completion} controls {\em the average error per entry} in the mean-squared sense. 
To make the error small, let us assume that we have a sample of size
$$
m \gg r n \log^2 n,
$$
which is slightly larger than the ideal size we discussed in \eqref{eq: m ideal}.
This makes $C \log(n) \sqrt{rn/m} = o(1)$ and forces the recovery error to be bounded by $o(1) \|X\|_\infty$. 
Summarizing, Theorem~\ref{thm: matrix completion} says that {\em the expected average error per 
entry is much smaller than the maximal magnitude of the entries of $X$}. This is true for a sample 
of almost optimal size $m$. The smaller the rank $r$ of the matrix $X$, the fewer entries of $X$ 
we need to see in order to do matrix completion.
  
\begin{proof}[Proof of Theorem~\ref{thm: matrix completion}]
{\bf Step 1: The error in the operator norm.}
Let us first bound the recovery error in the {\em operator} norm.
Decompose the error into two parts using triangle inequality: 
$$
\|\hat{X}-X\| \le \|\hat{X} - p^{-1} Y\| + \|p^{-1} Y - X\|.
$$
Recall that $\hat{X}$ is a best approximation to $p^{-1} Y$. 
Then the first part of the error is smaller than the second part, i.e. 
$\|\hat{X} - p^{-1} Y\| \le \|p^{-1} Y - X\|$, and we have
\begin{equation}         \label{eq: removing Xhat}
\|\hat{X}-X\| \le 2 \|p^{-1} Y - X\| = \frac{2}{p} \|Y-pX\|.
\end{equation}
The entries of the matrix $Y-pX$,
$$
(Y-pX)_{ij} = (\d_{ij}-p) X_{ij},
$$
are independent and mean zero random variables. 
Thus we can apply the bound \eqref{eq: matrix non-bdd symmetric} on the norms 
of random matrices and get 
\begin{equation}         \label{eq: via rows cols}
\E \|Y-pX\| \le C \log n \cdot \big( \E \max_{i \in [n]} \|(Y-pX)_i\|_2 + \E \max_{j \in [n]} \|(Y-pX)^j\|_2 \big).
\end{equation}

All that remains is to bound the norms of the rows and columns of $Y-pX$. 
This is not difficult if we note that they can be expressed as sums of independent random variables:  
$$
\|(Y-pX)_i\|_2^2 = \sum_{j=1}^n (\d_{ij}-p)^2 X_{ij}^2 \le \sum_{j=1}^n (\d_{ij}-p)^2 \cdot \|X\|_\infty^2,
$$
and similarly for columns. Taking expectation and noting that $\E(\d_{ij}-p)^2 = \Var(\d_{ij}) = p(1-p)$, 
we get\footnote{The first bound below that compares the $L^1$ and $L^2$ averages follows from 
H\"older's inequality.} 
\begin{equation}         \label{eq: exp rows}
\E \|(Y-pX)_i\|_2 \le (\E \|(Y-pX)_i\|_2^2)^{1/2} \le \sqrt{p n} \, \|X\|_\infty.
\end{equation}
This is a good bound, but we need something stronger in \eqref{eq: via rows cols}. 
Since the maximum appears inside the expectation, we need a {\em uniform} bound, 
which will say that all rows are bounded simultaneously with high probability. 

Such uniform bounds are usually proved by applying concentration inequalities followed 
by a union bound. Bernstein's inequality \eqref{eq: Bernstein bounded} yields
$$
\Pr{ \sum_{j=1}^n (\d_{ij}-p)^2 > t p n } \le \exp(-ctpn) 
\quad \text{for } t \ge 3.
$$
(Check!)
This probability can be further bounded by $n^{-ct}$ using the assumption that $m = pn^2 \ge n \log n$.
A union bound over $n$ rows leads to 
$$
\Pr{ \max_{i \in [n]} \sum_{j=1}^n (\d_{ij}-p)^2 > t p n } \le n \cdot n^{-ct}
\quad \text{for } t \ge 3.
$$
Integrating this tail, we conclude using \eqref{eq: integral} that 
$$
\E \max_{i \in [n]} \sum_{j=1}^n (\d_{ij}-p)^2 \lesssim pn.
$$
(Check!) And this yields the desired bound on the rows,
$$
\E \max_{i \in [n]} \|(Y-pX)_i\|_2 \lesssim \sqrt{pn},
$$
which is an improvement of \eqref{eq: exp rows} we wanted.
We can do similarly for the columns. Substituting into \eqref{eq: via rows cols}, this gives
$$
\E \|Y-pX\| \lesssim \log(n) \sqrt{pn} \; \|X\|_\infty.
$$ 
Then, by \eqref{eq: removing Xhat}, we get
\begin{equation}         \label{eq: error in operator norm}
\E \|\hat{X}-X\| \lesssim \log(n) \sqrt{\frac{n}{p}} \; \|X\|_\infty.
\end{equation}

{\bf Step 2: Passing to Frobenius norm.}
Now we will need to pass from the operator to Frobenius norm. This is where 
we will use for the first (and only) time the rank of $X$. 
We know that $\rank(X) \le r$ by assumption and $\rank(\hat{X}) \le r$ by construction, so
$\rank(\hat{X}-X) \le 2r$. There is a simple relationship between 
the operator and Frobenius norms:
$$
\|\hat{X}-X\|_F \le \sqrt{2r} \|\hat{X}-X\|.
$$
(Check it!)
Take expectation of both sides and use \eqref{eq: error in operator norm}; we get
$$
\E \|\hat{X}-X\|_F \le \sqrt{2r} \E \|\hat{X}-X\| \lesssim \log(n) \sqrt{\frac{rn}{p}} \; \|X\|_\infty.
$$
Dividing both sides by $n$, we can rewrite this bound as 
$$
\E \frac{1}{n} \|\hat{X}-X\|_F \lesssim \log(n) \sqrt{\frac{rn}{pn^2}} \; \|X\|_\infty.
$$
But $pn^2 = m$ by definition of the sampling probability $p$. 
This yields the desired bound \eqref{eq: matrix completion}.
\end{proof}

\subsection{Notes}

Theorem~\ref{thm: covariance estimation general} on covariance estimation is 
a version of \cite[Corollary~5.52]{V-RMT-tutorial}, see also \cite{KL}. 
The logarithmic factor is in general necessary. This theorem is a general-purpose result.
If one knows some additional structural information about the covariance matrix (such as sparsity), 
then fewer samples may be needed, see e.g. \cite{Cai-Zhao-Zhou, Levina-V, Chen-Gittens-Tropp}.

A version of Theorem~\ref{thm: matrix non-bdd} was proved in \cite{RS} in a more technical way.
Although the logarithmic factor in Theorem~\ref{thm: matrix non-bdd} can not be completely removed 
in general, it can be improved. Our argument actually gives
$$
\E \|A\| \le C \sqrt{\log n} \cdot \E \max_i \|A_i\|_2 + C \log n \cdot \E \max_{ij} |A_{ij}|,
$$
Using different methods, one can save an extra $\sqrt{\log n}$ factor and show that
$$
\E \|A\| \le C \E \max_i \|A_i\|_2 + C \sqrt{\log n} \cdot \E \max_{ij} |A_{ij}|
$$
(see \cite{Bandeira-van-Handel}) and 
$$
\E \|A\| \le C \sqrt{\log n \cdot \log \log n} \cdot \E \max_i \|A_i\|_2,
$$
see \cite{van-Handel}. (The results in \cite{Bandeira-van-Handel, van-Handel} are stated for Gaussian random matrices; 
the two bounds above can be deduced by using conditioning and symmetrization.)
The surveys \cite{V-RMT-tutorial, Bandeira-van-Handel} and the textbook \cite{V-textbook} present several other useful 
techniques to bound the operator norm of random matrices. 

The matrix completion problem, which we discussed in Section~\ref{s: matrix completion}, 
has attracted a lot of recent attention. E.~Candes and B.~Recht \cite{Candes-Recht} 
showed that one can often achieve {\em exact} matrix completion, thus computing the precise 
values of all missing values of a matrix, from $m \sim r n \log^2(n)$ randomly sampled entries. 
For exact matrix completion, one needs an extra {\em incoherence} assumption that is not present
in Theorem~\ref{thm: matrix completion}. This assumption basically excludes matrices that are 
simultaneously sparse and low rank (such as a matrix whose all but one entries are zero -- it would 
be extremely hard to complete it, since sampling will likely miss the non-zero entry). 
Many further results on exact matrix completion are known, e.g. \cite{Candes-Tao-completion, Recht, Gross, DPVW}.

Theorem~\ref{thm: matrix completion} with a simple proof is borrowed from \cite{PVY};
see also the tutorial \cite{V-estimation-tutorial}. This result only guarantees approximate matrix completion, 
but it does not have any incoherence assumptions on the matrix.

\section{Lecture 4: Matrix deviation inequality}	

In this last lecture we will study a new uniform deviation inequality for random matrices.
This result will be a far reaching generalization of the Johnson-Linden\-strauss Lemma 
we proved in Lecture~1. 

Consider the same setup as in Theorem~\ref{thm: JL}, 
where $A$ is an $m \times n$ random matrix whose rows are independent, mean zero, 
isotropic and sub-gaussian random vectors in $\R^n$. 
(If you find it helpful to think in terms of concrete examples, let the entries of $A$ be independent 
$N(0,1)$ random variables.) Like in the Johnson-Lindenstrauss Lemma, we will be looking  
at $A$ as a linear transformation from $\R^n$ to $\R^m$, and we will be interested in 
what $A$ does to points in some set in $\R^n$. This time, however, we will allow for {\em infinite} 
sets $T \subset \R^n$. 

Let us start by analyzing what $A$ does to a single fixed vector $x \in \R^n$. We have
\begin{align*}
\E \|Ax\|_2^2 
&= \E \sum_{j=1}^m \ip{A_j}{x}^2 \quad \text{(where $A_j^\tran$ denote the rows of $A$)}\\ 
&= \sum_{j=1}^m \E \ip{A_j}{x}^2 \quad \text{(by linearity)}\\
&= m \|x\|_2^2 \quad \text{(using isotropy of $A_j$).}
\end{align*}
Further, if we assume that concentration about the mean holds here (and in fact, it does), 
we should expect that 
\begin{equation}         \label{eq: deviation single}
\|Ax\|_2 \approx \sqrt{m} \, \|x\|_2
\end{equation}
with high probability.

Similarly to Johnson-Lindenstrauss Lemma, our next goal is to make \eqref{eq: deviation single} hold
simultaneously over all vectors $x$ in some fixed set $T \subset \R^n$. Precisely, 
we may ask -- how large is the average {\em uniform deviation}:
\begin{equation}         \label{eq: deviation question}
\E \sup_{x \in T} \Big| \|Ax\|_2 - \sqrt{m} \, \|x\| \Big| \,?
\end{equation}
This quantity should clearly depend on some notion of the size of $T$: the larger $T$, the larger 
should the uniform deviation be. So, how can we quantify the size of $T$ for this problem?
In the next section we will do precisely this -- introduce a convenient, geometric measure of the 
sizes of sets in $\R^n$, which is called {\em Gaussian width}.

\subsection{Gaussian width}

\begin{definition}
  Let $T \subset \R^n$ be a bounded set, and $g$ be a standard normal random vector in $\R^n$,
  i.e. $g \sim N(0,I_n)$. Then the quantities  
  $$
  w(T) := \E \sup_{x \in T} \ip{g}{x} 
  \quad \text{and} \quad
  \gamma(T) := \E \sup_{x \in T} |\ip{g}{x}|
  $$
  are called the {\em Gaussian width} of $T$ and the {\em Gaussian complexity} of $T$, 
  respectively. 
\end{definition}

Gaussian width and Gaussian complexity are closely related. Indeed,\footnote{The set $T-T$ is defined as $\{x-y:\; x,y \in T\}$. 
  More generally, given two sets $A$ and $B$ in the same vector space, 
  the {\em Minkowski sum} of $A$ and $B$ is defined as $A+B = \{a+b:\; a \in A, \, b \in B\}$.}
\begin{equation}         \label{eq: width vs complexity}
w(T) = \frac{1}{2} w(T-T) 
= \frac{1}{2} \E \sup_{x,y \in T} \ip{g}{x-y}
= \frac{1}{2} \E \sup_{x,y \in T} |\ip{g}{x-y}| 
= \frac{1}{2} \gamma(T-T).
\end{equation}
(Check these identities!)

Gaussian width has a natural geometric interpretation. 
Suppose $g$ is a unit vector in $\R^n$. Then a moment's thought reveals that $\sup_{x,y \in T} \ip{g}{x-y}$ is
simply the {\em width of $T$ in the direction of $g$}, i.e. the distance between the two
hyperplanes with normal $g$ that touch $T$ on both sides as shown in Figure~\ref{fig: width}. 
Then $2w(T)$ can be obtained by averaging the width of $T$ over all directions $g$ in $\R^n$. 

\begin{figure}[htp]			
  \centering \includegraphics[width=0.45\textwidth]{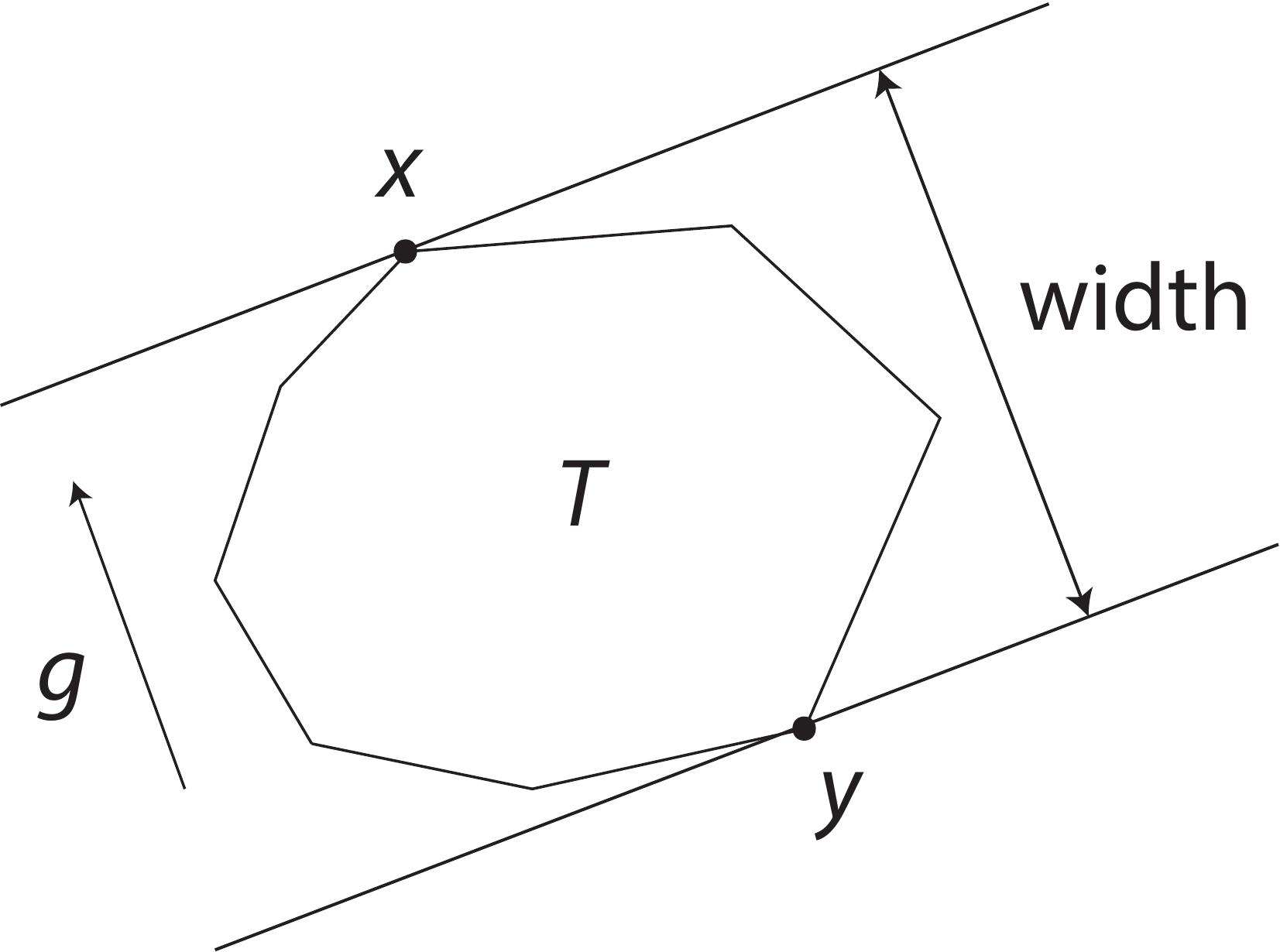} 
  \caption{The width of a set $T$ in the direction of $g$.}
  \label{fig: width}	
\end{figure}

This reasoning is valid except where we assumed that $g$ is a unit vector. Instead, for $g \sim N(0,I_n)$
we have $\E \|g\|_2^2 = n$ and 
$$
\|g\|_2 \approx \sqrt{n} \quad \text{with high probability.}
$$ 
(Check both these claims using Bernstein's inequality.)
Thus, we need to scale by the factor $\sqrt{n}$. Ultimately, the geometric interpretation of the 
Gaussian width becomes the following: {\em $w(T)$ is approximately $\sqrt{n}/2$ larger 
than the usual, geometric width of $T$ averaged over all directions}. 

A good exercise is to compute the Gaussian width and complexity for some simple sets, such as
the unit balls of the $\ell_p$ norms in $\R^n$, which we denote  
$B_p^n = \{x \in \R^n:\; \|x\|_p \le 1\}$.
In particular, we have 
\begin{equation}         \label{eq: width L2 L1}
\gamma(B_2^n) \sim \sqrt{n}, \quad \gamma(B_1^n) \sim \sqrt{\log n}.
\end{equation}
For any finite set $T \subset B_2^n$, we have 
\begin{equation}         \label{eq: width finite set}
\gamma(T) \lesssim \sqrt{\log |T|}.
\end{equation}
The same holds for Gaussian width $w(T)$. (Check these facts!)

A look a these examples reveals that the Gaussian width captures some non-obvious 
geometric qualities of sets. Of course, the fact that the Gaussian width of the unit 
Euclidean ball $B_2^n$ is or order $\sqrt{n}$ is not surprising: the usual, geometric 
width in all directions is $2$ and the Gaussian width is about $\sqrt{n}$ times that. 
But it may be surprising that the Gaussian width of the $\ell_1$ ball $B_1^n$ is much smaller, 
and so is the width of any finite set $T$ (unless the set has exponentially large cardinality).
As we will see later, Gaussian width nicely captures the geometric size of ``the bulk'' of a set. 

\subsection{Matrix deviation inequality}

Now we are ready to answer the question we asked in the beginning of this lecture: 
what is the magnitude of the uniform deviation \eqref{eq: deviation question}?
The answer is surprisingly simple: it is bounded by the Gaussian complexity of $T$.
The proof is not too simple however, and we will skip it (see the notes after this lecture for references).

\begin{theorem}[Matrix deviation inequality]	\label{thm: matrix deviation}
  Let $A$ be an $m \times n$ matrix whose rows $A_i$ are independent, isotropic 
  and sub-gaussian random vectors in $\R^n$. 
  Let $T \subset \R^n$ be a fixed bounded set. Then
  $$
  \E \sup_{x \in T} \Big| \|Ax\|_2 - \sqrt{m} \|x\|_2 \Big| \le CK^2 \gamma(T)
  $$
  where $K = \max_i \|A_i\|_\psitwo$ is the maximal sub-gaussian norm\footnote{A definition 
    of the sub-gaussian norm of a random vector was given in Section~\ref{s: sub-gaussian vectors}.
    For example, if $A$ is a Gaussian random matrix with independent $N(0,1)$ entries, 
    then $K$ is an absolute constant.}
  of the rows of $A$. 
\end{theorem}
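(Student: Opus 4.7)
The plan is to recast the left-hand side as the supremum of a stochastic process and attack it by generic chaining. Define
$$f(x) := \|Ax\|_2 - \sqrt{m}\,\|x\|_2, \qquad x \in \R^n,$$
so that the target inequality reads $\E \sup_{x \in T} |f(x)| \lesssim K^2 \gamma(T)$. The proof will have two stages: first, establish a pointwise \emph{sub-gaussian increment inequality}
$$\bigl\| f(x) - f(y) \bigr\|_\psitwo \le C K^2 \|x - y\|_2 \qquad \text{for all } x, y \in \R^n;$$
second, convert this pointwise bound into the desired uniform estimate by Gaussian chaining.

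Stage two is the routine half. Any stochastic process $(Z_x)_{x \in T}$ in Euclidean space with $L$-sub-gaussian increments satisfies Talagrand's generic chaining bound $\E \sup_{x, y \in T}|Z_x - Z_y| \lesssim L \cdot w(T)$, by the majorizing measures theorem. Since $f(0) = 0$, I would anchor the chain at $0$ by adjoining $0$ to $T$ (which changes $\gamma(T)$ by at most a constant factor), and the fact that we control $|f(x)|$ rather than $f(x)$ naturally produces the symmetric functional $\gamma(T) = \E \sup_{x \in T}|\ip{g}{x}|$ in place of $w(T)$ (applying chaining to both $f$ and $-f$ and combining).

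The main obstacle is the sub-gaussian increment inequality, since $x \mapsto \|Ax\|_2$ is not smooth enough for a direct application of Bernstein to $f(x) - f(y)$. The key device I would use is the identity
$$\|Ax\|_2^2 - \|Ay\|_2^2 = \sum_{i=1}^m \ip{A_i}{x+y}\ip{A_i}{x-y},$$
which linearizes the difference after squaring. Each summand is a product of two sub-gaussians, hence sub-exponential with $\psione$-norm at most $K^2 \|x+y\|_2 \|x-y\|_2$ by the polarization $\|UV\|_\psione \le \|U\|_\psitwo \|V\|_\psitwo$ of \eqref{eq: psione psitwo}. The sum has mean $m(\|x\|_2^2 - \|y\|_2^2)$, and Bernstein's inequality (Theorem~\ref{thm: Bernstein}) gives a mixed concentration tail with variance proxy of order $m K^4 \|x+y\|_2^2 \|x-y\|_2^2$. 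Dividing by $\|Ax\|_2 + \|Ay\|_2$, which is typically comparable to $\sqrt{m}(\|x\|_2 + \|y\|_2) \ge \sqrt{m}\|x+y\|_2$, rescales the sub-exponential piece into a purely sub-gaussian tail at the correct scale $K^2 \|x-y\|_2$; subtracting the deterministic drift $\sqrt{m}(\|x\|_2 - \|y\|_2)$ then produces the increment of $f$.

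The subtle point inside this step is that the denominator $\|Ax\|_2 + \|Ay\|_2$ is itself random, so one must separately control the rare event on which it is much smaller than $\sqrt{m}(\|x\|_2 + \|y\|_2)$. This requires the degenerate one-point case of the inequality (the case $y=0$) as a preliminary input: applying Bernstein to $\|Au\|_2^2 = \sum_i \ip{A_i}{u}^2$ for unit vectors $u$ shows that $\|Au\|_2$ concentrates around $\sqrt{m}$ with sub-gaussian fluctuations of order $K^2$, so the denominator is large except on an exponentially small event. A split into this typical event and its complement, followed by a union-bound argument, then yields the full two-point increment inequality, and the chaining of stage two finishes the proof.
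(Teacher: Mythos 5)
The paper does not actually prove this theorem: the sentence immediately preceding the statement says the proof ``is not too simple however, and we will skip it,'' deferring to the references, principally \cite{LMPV}. There is therefore no in-paper argument to compare against; your proposal should be judged on its own merits and against \cite{LMPV}, and it does trace that route. The two-stage plan (sub-gaussian increment bound for $f(x)=\|Ax\|_2-\sqrt m\|x\|_2$, then Talagrand comparison / generic chaining), the polarization identity $\|Ax\|_2^2-\|Ay\|_2^2=\sum_i\ip{A_i}{x+y}\ip{A_i}{x-y}$, the bound $\|UV\|_{\psione}\le\|U\|_{\psitwo}\|V\|_{\psitwo}$, Bernstein, and the one-point concentration of $\|Au\|_2$ about $\sqrt m$ are exactly the ingredients of the argument in \cite{LMPV}.

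Where your sketch underestimates the work is the step from Bernstein on $\|Ax\|_2^2-\|Ay\|_2^2$ to a $\psitwo$ bound on $f(x)-f(y)$ for \emph{general} $x,y$. Setting $D=\|Ax\|_2+\|Ay\|_2$ and $E=\|Ax\|_2^2-\|Ay\|_2^2-m(\|x\|_2^2-\|y\|_2^2)$, the ``divide, then subtract the drift'' manipulation gives
$$
f(x)-f(y)=\frac{E}{D}-\frac{\sqrt m\,(\|x\|_2-\|y\|_2)}{D}\,\bigl(f(x)+f(y)\bigr),
$$
which is self-referential: besides the random denominator (which you do flag), the drift correction produces a second random term proportional to $f(x)+f(y)$, which you do not flag. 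That term also needs the one-point bound $\|f(x)\|_{\psitwo}\lesssim K^2\|x\|_2$ together with $\bigl|\|x\|_2-\|y\|_2\bigr|\le\|x-y\|_2$ to be tamed. The cleaner route, which is what \cite{LMPV} actually do, is to prove the increment bound first for $x,y$ on the unit sphere (so $\|x\|_2=\|y\|_2$ and this extra term vanishes identically) and then extend to all of $\R^n$ via the positive homogeneity $f(\lambda x)=\lambda f(x)$ and the triangle inequality. A second point you should make explicit: the divide-by-$D$ argument produces a sub-gaussian tail only at scales $t\lesssim\sqrt m$; for $t\gtrsim\sqrt m$ you must switch to the cruder bound $\bigl|\|Ax\|_2-\|Ay\|_2\bigr|\le\|A(x-y)\|_2$ and apply Bernstein to $\|A(x-y)\|_2^2$ directly. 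None of this derails the approach --- all the needed tools are already in your outline --- but it is exactly where the promised difficulty lives, and the present sketch reads as if the drift subtraction were a clean deterministic step when it is not.
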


\begin{remark}[Tail bound] 
  It is often useful to have results that hold {\em with high probability} rather than in expectation. 
  There exists a high-probability version of the matrix deviation inequality, and it states the following. 
  Let $u \ge 0$. Then the event
  \begin{equation}         \label{eq: matrix deviation tail}
  \sup_{x \in T} \Big| \|Ax\|_2 - \sqrt{m} \|x\|_2 \Big| 
  \le CK^2 \left[ \gamma(T) + u \cdot \rad(T) \right]
  \end{equation}
  holds with probability at least $1-2\exp(-u^2)$. 
  Here $\rad(T)$ is the {\em radius} of $T$, defined as   
  $$
  \rad(T) := \sup_{x \in T} \|x\|_2.
  $$
  
  Since $\rad(T) \lesssim \gamma(T)$ (check!) 
  we can continue the bound \eqref{eq: matrix deviation tail} by
  $$
  \lesssim K^2 u \gamma(T)
  $$
  for all $u \ge 1$. This is a weaker but still a useful inequality. 
  For example, we can use it to bound all higher moments of the deviation: 
  \begin{equation}         \label{eq: deviation moments}
  \Big( \E \sup_{x \in T} \Big| \|Ax\|_2 - \sqrt{m} \|x\|_2 \Big|^p \Big)^{1/p} \le C_p K^2 \gamma(T)
  \end{equation}
  where $C_p \le C \sqrt{p}$ for $p \ge 1$. (Check this using Proposition~\ref{prop: sub-gaussian}.)
\end{remark}

\begin{remark}[Deviation of squares]
  It is sometimes helpful to bound the deviation of the {\em square} $\|Ax\|_2^2$ rather than $\|Ax\|_2$ itself.
  We can easily deduce the deviation of squares by using the identity
  $a^2-b^2= (a-b)^2 + 2b(a-b)$
  for $a = \|Ax\|_2$ and $b = \sqrt{m} \|x\|_2$. Doing this, we conclude that 
  \begin{equation}         \label{eq: MDI squares} 
  \E \sup_{x \in T} \Big| \|Ax\|_2^2 - m \|x\|_2^2 \Big| 
  \le CK^4 \gamma(T)^2 + CK^2 \sqrt{m} \rad(T) \gamma(T).
  \end{equation}
  (Do this calculation using \eqref{eq: deviation moments} for $p=2$.) 
  We will use this bound in Section~\ref{s: covariance sub-gaussian}.
\end{remark}

Matrix deviation inequality has many consequences. 
We will explore some of them now. 

\subsection{Deriving Johnson-Lindenstrauss Lemma}

We started this lecture by promising a result that is more general than Johnson-Lindenstrauss Lemma.
So let us show how to quickly derive Johnson-Lindenstrauss from the matrix deviation inequality. 
Theorem~\ref{thm: JL} from Theorem~\ref{thm: matrix deviation}.

Assume we are in the situation of the Johnson-Lindenstrauss Lemma (Theorem~\ref{thm: JL}).
Given a set $\XX \subset \R$, consider the normalized difference set 
$$
T := \Big\{ \frac{x-y}{\|x-y\|_2} :\; x, y \in \XX \text{ distinct vectors} \Big\}.
$$
Then $T$ is a finite subset of the unit sphere of $\R^n$, and thus \eqref{eq: width finite set} gives
$$
\gamma(T) \lesssim \sqrt{\log |T|} \le \sqrt{\log |\XX|^2} \lesssim \sqrt{\log |\XX|}.
$$
Matrix deviation inequality (Theorem~\ref{thm: matrix deviation}) then yields
$$
\sup_{x,y \in \XX} \Big| \frac{\|A(x-y)\|_2}{\|x-y\|_2} - \sqrt{m} \Big| 
\lesssim \sqrt{\log N} \le \e \sqrt{m}.
$$
with high probability, say $0.99$. (To pass from expectation to high probability,
we can use Markov's inequality. To get the last bound, we use the assumption on $m$ 
in Johnson-Lindenstrauss Lemma.)

Multiplying both sides by $\|x-y\|_2/\sqrt{m}$, we can write the last bound as follows. 
With probability at least $0.99$, we have 
$$
(1-\e) \|x-y\|_2 \le \frac{1}{\sqrt{m}} \|Ax - Ay\|_2 \le (1+\e) \|x-y\|_2 \quad \text{for all } x,y \in \XX.
$$
This is exactly the consequence of Johnson-Lindenstrauss lemma.

The argument based on matrix deviation inequality, which we just gave, 
can be easily extended for infinite sets. It allows one to state a version of 
Johnson-Lindenstrauss lemma for general, possibly infinite, sets, which depends
on the Gaussian complexity of $T$ rather than cardinality. (Try to do this!)

\subsection{Covariance estimation}				\label{s: covariance sub-gaussian}

In Section~\ref{s: covariance general}, we introduced the problem of covariance 
estimation, and we showed that 
$$
N \sim n \log n
$$
samples are enough to estimate the covariance matrix of a general distribution 
in $\R^n$. We will now show how to do better if the distribution is 
sub-gaussian. (Recall Section~\ref{s: sub-gaussian vectors} for the definition of sub-gaussian random vectors.)
In this case, we can get rid of the logarithmic oversampling and the boundedness
condition \eqref{eq: distribution bounded}. 

\begin{theorem}[Covariance estimation for sub-gaussian distributions]	\label{thm: covariance estimation sub-gaussian}
  Let $X$ be a random vector in $\R^n$ with covariance matrix $\Sigma$. 
  Suppose $X$ is sub-gaussian, and more specifically
  \begin{equation}         \label{eq: sub-gaussian normalized}
  \|\ip{X}{x}\|_\psitwo \lesssim \|\ip{X}{x}\|_{L^2} = \|\Sigma^{1/2} x\|_2 
  \quad \text{for any } x \in \R^n.
  \end{equation}
  Then, for every $N \ge 1$, we have
  $$
  \E \|\Sigma_N - \Sigma\| \lesssim \|\Sigma\| \; \Big( \sqrt{\frac{n}{N}} + \frac{n}{N} \Big).
  $$
\end{theorem}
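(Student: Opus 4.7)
The plan is to reduce the problem to the isotropic case and then invoke the squared form of the matrix deviation inequality (equation \eqref{eq: MDI squares}) applied to the unit sphere.

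First I would normalize. Assuming $\Sigma$ is invertible (otherwise restrict to its range), let $Z := \Sigma^{-1/2} X$ and $Z_i := \Sigma^{-1/2} X_i$. Then $Z$ is isotropic since $\E Z Z^\tran = I_n$, and the hypothesis \eqref{eq: sub-gaussian normalized} gives
$$
\|\ip{Z}{x}\|_\psitwo = \|\ip{X}{\Sigma^{-1/2} x}\|_\psitwo \lesssim \|\Sigma^{1/2} \Sigma^{-1/2} x\|_2 = \|x\|_2,
$$
so $\|Z\|_\psitwo \lesssim 1$. Writing $R_N := \frac{1}{N} \sum_{i=1}^N Z_i Z_i^\tran$, we have $\Sigma_N = \Sigma^{1/2} R_N \Sigma^{1/2}$ and thus $\|\Sigma_N - \Sigma\| \le \|\Sigma\| \cdot \|R_N - I_n\|$. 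So it suffices to prove
$$
\E \|R_N - I_n\| \lesssim \sqrt{\tfrac{n}{N}} + \tfrac{n}{N}.
$$

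Next I would express this norm as a uniform deviation over the sphere. Let $A$ be the $N \times n$ random matrix with rows $Z_i^\tran$. Then $\|Ax\|_2^2 = \sum_{i=1}^N \ip{Z_i}{x}^2$, and for any $x \in S^{n-1}$,
$$
\ip{(R_N - I_n) x}{x} = \tfrac{1}{N} \|Ax\|_2^2 - 1.
$$
Since $R_N - I_n$ is symmetric, taking a supremum over $S^{n-1}$ yields
$$
\|R_N - I_n\| = \tfrac{1}{N} \sup_{x \in S^{n-1}} \bigl| \|Ax\|_2^2 - N \|x\|_2^2 \bigr|.
$$

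Now apply the matrix deviation inequality in its squared version \eqref{eq: MDI squares} with $T = S^{n-1}$ and $m = N$. The relevant geometric parameters are $\rad(S^{n-1}) = 1$ and $\gamma(S^{n-1}) = \E \|g\|_2 \asymp \sqrt{n}$ where $g \sim N(0, I_n)$, and the sub-gaussian constant is $K \lesssim 1$ by the previous step. This gives
$$
\E \sup_{x \in S^{n-1}} \bigl| \|Ax\|_2^2 - N \|x\|_2^2 \bigr| \lesssim n + \sqrt{N n}.
$$
Dividing by $N$ yields $\E \|R_N - I_n\| \lesssim \tfrac{n}{N} + \sqrt{\tfrac{n}{N}}$, and combining with the reduction above completes the proof.

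The main substantive step is really the squared matrix deviation inequality itself, which we are allowed to cite. The only mild subtlety in executing the plan is checking the reduction to the isotropic case when $\Sigma$ is singular (one simply restricts to the range of $\Sigma$ and applies the isotropic bound in that effective dimension, which is at most $n$) and verifying that the hypothesis \eqref{eq: sub-gaussian normalized} is precisely what makes $\Sigma^{-1/2} X$ sub-gaussian with constant $O(1)$. Everything else is bookkeeping with the known behavior of $\gamma$ and $\rad$ on the Euclidean sphere.
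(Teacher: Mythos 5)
Your proof is correct and follows the same overall strategy as the paper: reduce to the isotropic case, express the deviation as a supremum of $\bigl|\|Ax\|_2^2 - N\|x\|_2^2\bigr|$ over a set, and invoke the squared matrix deviation inequality \eqref{eq: MDI squares}. The one place you diverge is the choice of set $T$. You first bound $\|\Sigma_N-\Sigma\| \le \|\Sigma\|\cdot\|R_N - I_n\|$ by submultiplicativity and then apply \eqref{eq: MDI squares} over the unit sphere $S^{n-1}$, for which $\gamma(S^{n-1}) \asymp \sqrt{n}$ and $\rad(S^{n-1})=1$. The paper instead keeps the quadratic form $\ip{R_N x}{x}$ intact and pulls the $\Sigma^{1/2}$ factors into the set, applying \eqref{eq: MDI squares} over the ellipsoid $T = \Sigma^{1/2}S^{n-1}$, where $\gamma(T) \le (\tr\Sigma)^{1/2}$ and $\rad(T) = \|\Sigma\|^{1/2}$. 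Both routes give the stated bound after the final inequality $\tr\Sigma \le n\|\Sigma\|$. The ellipsoid route, however, is strictly more informative: it yields the dimension-free refinement of Remark~\ref{rem: low-dimensional} with $n$ replaced by the stable rank $r = \tr\Sigma/\|\Sigma\|$, which your submultiplicativity step discards at the outset. So your argument is a slightly coarser instance of the same method; if you care only about the stated theorem, it is perfectly fine, but you would need to switch to the ellipsoid to recover the stable-rank version.
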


This result implies that if, for $\e \in (0,1)$, we take a sample of size 
$$
N \sim \e^{-2} n,
$$
then we are guaranteed covariance estimation with a good relative error:
$$
\E \|\Sigma_N - \Sigma\| \le \e \|\Sigma\|.
$$

\begin{proof}
Since we are going to use Theorem~\ref{thm: matrix deviation}, we will need to first bring the random vectors 
$X, X_1, \ldots, X_N$ 
to the isotropic position. This can be done by a suitable linear transformation. You will easily 
check that there exists an {\em isotropic} random vector $Z$ such that 
$$
X = \Sigma^{1/2} Z.
$$
(For example, $\Sigma$ has full rank, set $Z := \Sigma^{-1/2} X$. Check the general case.)
Similarly, we can find independent and isotropic random vectors $Z_i$ such that  
$$
X_i = \Sigma^{1/2} Z_i, \quad i=1,\ldots,N.
$$
The sub-gaussian assumption \eqref{eq: sub-gaussian normalized} then implies that 
$$
\|Z\|_\psitwo \lesssim 1.
$$
(Check!)
Then 
$$
\|\Sigma_N - \Sigma\|
= \|\Sigma^{1/2} R_N \Sigma^{1/2}\|
\quad \text{where} \quad 
R_N := \frac{1}{N} \sum_{i=1}^N Z_i Z_i^\tran - I_n.
$$

The operator norm of a symmetric $n \times n$ matrix $A$ can be computed by maximizing 
the quadratic form over the unit sphere: $\|A\| = \max_{x \in S^{n-1}} |\ip{Ax}{x}|$. 
(To see this, recall that the operator norm is the biggest eigenvalue of $A$ in magnitude.)
Then 
$$
\|\Sigma_N - \Sigma\| = \max_{x \in S^{n-1}} \ip{\Sigma^{1/2} R_N \Sigma^{1/2} x}{x}
= \max_{x \in T} \ip{R_N x}{x}
$$
where $T$ is the ellipsoid 
$$
T := \Sigma^{1/2} S^{n-1}.
$$

Recalling the definition of $R_N$, we can rewrite this as
$$
\|\Sigma_N - \Sigma\| 
= \max_{x \in T} \Big| \frac{1}{N} \sum_{i=1}^N \ip{Z_i}{x}^2 - \|x\|_2^2 \Big|
= \frac{1}{N} \max_{x \in T} \big| \|Ax\|_2^2 - N \|x\|_2^2 \big|.
$$
Now we apply the matrix deviation inequality for squares \eqref{eq: MDI squares} and conclude that
$$
\|\Sigma_N - \Sigma\| 
\lesssim \frac{1}{N} \Big( \gamma(T)^2 + \sqrt{N} \rad(T) \gamma(T) \Big).
$$
(Do this calculation!)
The radius and Gaussian width of the ellipsoid $T$ are easy to compute:
$$
\rad(T) = \|\Sigma\|^{1/2} 
\quad \text{and} \quad 
\gamma(T) \le (\tr \Sigma)^{1/2}.
$$
Substituting, we get
$$
\|\Sigma_N - \Sigma\| 
\lesssim \frac{1}{N} \Big( \tr \Sigma + \sqrt{N \|\Sigma\| \tr \Sigma} \Big).
$$
To complete the proof, use that $\tr \Sigma \le n \|\Sigma\|$ (check this!) 
and simplify the bound. 
\end{proof}

\begin{remark}[Low-dimensional distributions]		\label{rem: low-dimensional}
  Similarly to Section~\ref{s: covariance general}, we can show that much fewer samples  
  are needed for covariance estimation of {\em low-dimensional} sub-gaussian distributions. 
  Indeed, the proof of Theorem~\ref{thm: covariance estimation sub-gaussian} actually yields
  \begin{equation}         \label{eq: low-dimensional}
  \E \|\Sigma_N - \Sigma\| \le \|\Sigma\| \; \Big( \sqrt{\frac{r}{N}} + \frac{r}{N} \Big)
  \end{equation}
  where 
  $$
  r = r(\Sigma^{1/2}) = \frac{\tr \Sigma}{\|\Sigma\|}
  $$
  is the {\em stable rank} of $\Sigma^{1/2}$.
  This means that covariance estimation is possible with 
  $$
  N \sim r
  $$
  samples. 
\end{remark}

\subsection{Underdetermined linear equations}		\label{s: underdetermined}

We will give one more application of the matrix deviation inequality -- this time, 
to the area of high dimensional inference. 
Suppose we need to solve a severely underdetermined system of linear equations: 
say, we have $m$ equations in $n \gg m$ variables. Let us write it in the matrix form as 
$$
y = Ax
$$
where $A$ is a given $m \times n$ matrix, $y \in \R^m$ is a given vector and 
$x \in \R^n$ is an unknown vector. We would like to compute $x$ from $A$ and $y$.

When the linear system is underdetermined, we can not find $x$ with any accuracy, unless we know
something extra about $x$. So, let us assume that we do have some a-priori information.
We can describe this situation mathematically by assuming that 
$$
x \in K
$$
where $K \subset \R^n$ is some known set in $\R^n$ that describes anything that we know 
about $x$ a-priori. (Admittedly, we are operating 
on a high level of generality here. If you need a concrete example, we will consider it 
in Section~\ref{s: sparse}.)

Summarizing, here is the problem we are trying to solve. Determine a solution $x = x(A,y,K)$ 
to the underdetermined linear equation $y=Ax$ as accurately as possible, assuming that
$x \in K$. 

A variety of approaches to this and similar problems were proposed during the last decade; see
the notes after this lecture for pointers to some literature.
The one we will describe here is based on optimization. 
To do this, it will be convenient to convert the {\em set} $K$ into a {\em function} on $\R^n$
which is called the {\em Minkowski functional} of $K$. This is basically a function 
whose level sets are multiples of $K$. To define it formally, assume that $K$ is {\em star-shaped},
which means that together with any point $x$, the set $K$ must contain the entire interval that connects 
$x$ with the origin; see Figure~\ref{fig: star-shaped} for illustration. The 
Minkowski functional of $K$ is defined as 
$$
\|x\|_K := \inf \big\{ t > 0:\; x/t \in K \big\}, \quad x \in \R^n.
$$
If the set $K$ is convex and symmetric about the origin, $\|x\|_K$ is actually a {\em norm} on $\R^n$. 
(Check this!)

\begin{figure}[htp]			
  \centering \includegraphics[width=0.5\textwidth]{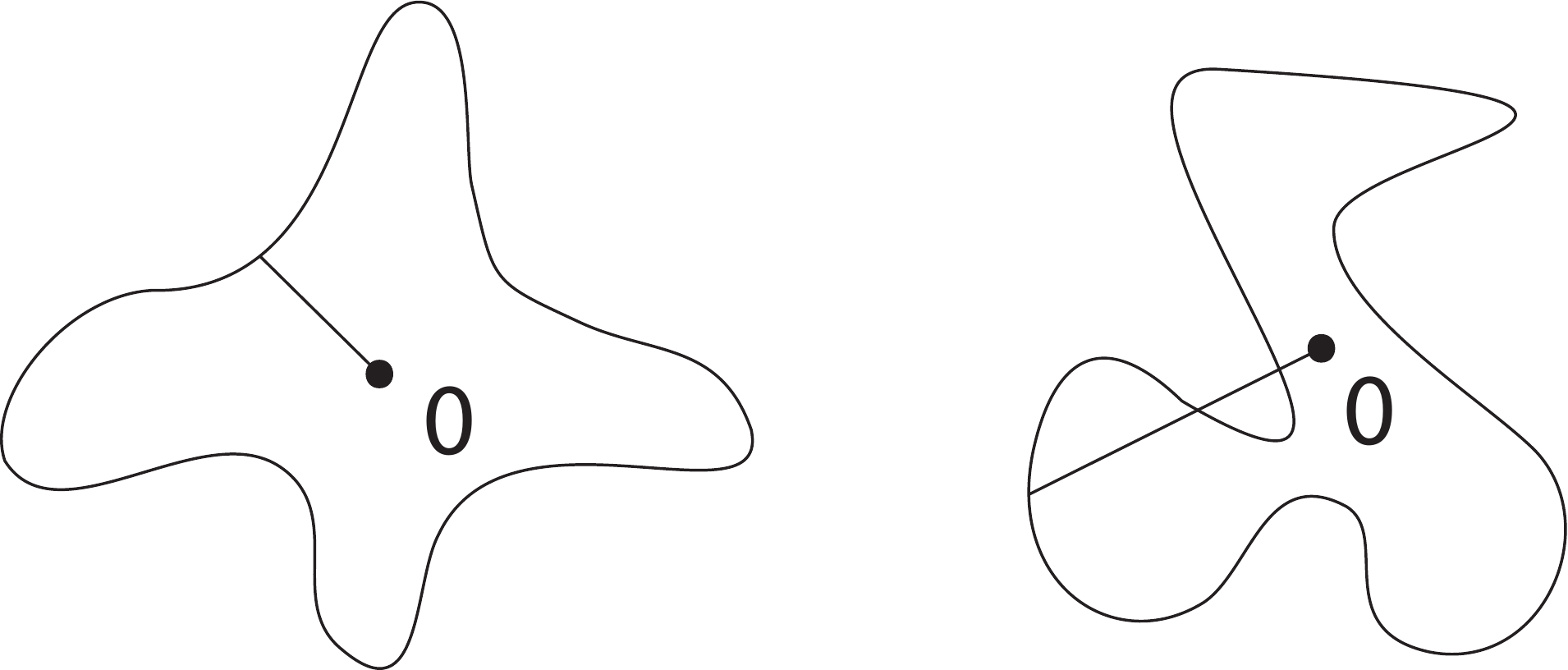} 
  \caption{The set on the left (whose boundary is shown) is star-shaped, the set on the right is not.}
  \label{fig: star-shaped}	
\end{figure}

Now we propose the following way to solve the recovery problem: solve the optimization program
\begin{equation}         \label{eq: opt}
\min \|x'\|_K \quad \text{subject to} \quad y = Ax'.
\end{equation}
Note that this is a very natural program: it looks at all solutions to the equation $y=Ax'$
and tries to ``shrink'' the solution $x'$ toward $K$. (This is what minimization of Minkowski functional is about.)

Also note that if $K$ is convex, 
this is a {\em convex optimization} program, and thus can be solved effectively 
by one of the many available numeric algorithms. 

The main question we should now be asking is -- would the solution to this program 
approximate the original vector $x$? The following result bounds the approximation error
for a {\em probabilistic model} of linear equations. Assume that $A$ is a random matrix as in 
Theorem~\ref{thm: matrix deviation}, i.e. $A$ is an $m \times n$ matrix whose rows $A_i$ are independent, isotropic and sub-gaussian random vectors in $\R^n$. 

\begin{theorem}[Recovery by optimization]			\label{thm: recovery}
  The solution $\hat{x}$ of the optimization program \eqref{eq: opt} satisfies\footnote{Here and in other similar results, 
  the notation $\lesssim$ will hide possible dependence on the sub-gaussian norms of the rows of $A$.}
  $$
  \E \|\hat{x}-x\|_2 \lesssim \frac{w(K)}{\sqrt{m}},
  $$
  where $w(K)$ is the Gaussian width of $K$.
\end{theorem}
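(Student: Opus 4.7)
The plan is to reduce the problem directly to the matrix deviation inequality (Theorem~\ref{thm: matrix deviation}) applied to the difference set $T = K - K$. The key reason this works: the optimization guarantees that the error vector lies in $K - K$, and feasibility forces its image under $A$ to vanish, which combined with the uniform lower bound on $\|A\cdot\|_2$ forces the error itself to be small.

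First I would establish that the error $h := \hat{x} - x$ lies in $K - K$. Since $x$ is feasible for \eqref{eq: opt} and $x \in K$ gives $\|x\|_K \le 1$, optimality yields $\|\hat{x}\|_K \le \|x\|_K \le 1$, hence $\hat{x} \in K$ (assuming $K$ is closed; otherwise one works with $\bar{K}$). Thus $h \in K - K$. Moreover, both $x$ and $\hat{x}$ satisfy $Ax = A\hat{x} = y$, so
\[
A h = 0.
\]

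Next I would apply the matrix deviation inequality with the bounded set $T := K - K$. Since $h \in T$ and $\|Ah\|_2 = 0$, we obtain
\[
\sqrt{m}\,\|h\|_2 = \bigl|\|Ah\|_2 - \sqrt{m}\,\|h\|_2\bigr| \le \sup_{z \in K - K}\bigl|\|Az\|_2 - \sqrt{m}\,\|z\|_2\bigr|.
\]
Taking expectations and invoking Theorem~\ref{thm: matrix deviation} gives
\[
\sqrt{m}\,\E\|h\|_2 \le C K^2 \,\gamma(K - K).
\]
Finally, the identity $\gamma(K-K) = 2 w(K)$ recorded in \eqref{eq: width vs complexity} converts the Gaussian complexity of the symmetrized set into the Gaussian width of $K$ itself, yielding $\E \|\hat{x} - x\|_2 \lesssim w(K)/\sqrt{m}$ as claimed.

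There is no serious obstacle here once the matrix deviation inequality is in hand; the only subtle point is recognizing that one must feed the deviation inequality the \emph{difference} set $K - K$ (not $K$), because only then does the feasibility of both $x$ and $\hat{x}$ together with minimality of the Minkowski functional place the error in the set to which the uniform bound applies. One minor technical care: if $K$ is not closed the minimizer may not exist, but an $\varepsilon$-minimizer suffices and yields the same bound up to an arbitrarily small additive term; alternatively the result is stated for closed $K$, which covers all typical applications.
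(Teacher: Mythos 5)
Your proof is correct and follows essentially the same route as the paper: show both $x$ and $\hat{x}$ lie in $K$ via minimality of the Minkowski functional, observe $A(\hat x - x)=0$, apply the matrix deviation inequality to $T=K-K$, and use $\gamma(K-K)=2w(K)$. Your remark about the closedness of $K$ (or using an $\varepsilon$-minimizer) is a sensible technical addendum that the paper leaves implicit.
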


\begin{proof}
Both the original vector $x$ and the solution $\hat{x}$  are feasible vectors for the optimization program \eqref{eq: opt}.
Then 
\begin{align*}
\|\hat{x}\|_K 
  &\le \|x\|_K \quad \text{(since $\hat{x}$ minimizes the Minkowski functional)} \\
  &\le 1 \quad \text{(since $x \in K$).}
\end{align*}
Thus both $\hat{x}, x \in K$.

We also know that $A\hat{x} = Ax = y$, which yields
\begin{equation}         \label{eq: kernel}
A(\hat{x}-x) = 0.
\end{equation}

Let us apply matrix deviation inequality (Theorem~\ref{thm: matrix deviation}) for $T := K-K$.
It gives
$$
\E \sup_{u,v \in K} \Big| \|A(u-v)\|_2 - \sqrt{m} \, \|u-v\|_2 \Big| \lesssim \gamma(K-K) = 2w(K),
$$
where we used \eqref{eq: width vs complexity} in the last identity.
Substitute $u = \hat{x}$ and $v = x$ here. We may do this since, as we noted above, 
both these vectors belong to $K$. But then the term $\|A(u-v)\|_2$ will equal zero 
by \eqref{eq: kernel}. It disappears from the bound, and we get 
$$
\E \sqrt{m} \, \|\hat{x}-x\|_2 \lesssim w(K).
$$
Dividing both sides by $\sqrt{m}$ we complete the proof.
\end{proof}

Theorem~\ref{thm: recovery} says that a signal $x \in K$
can be efficiently recovered from
$$
m \sim w(K)^2
$$
random linear measurements.

\subsection{Sparse recovery}		\label{s: sparse}

Let us illustrate Theorem~\ref{thm: recovery} with an important specific example of the 
feasible set $K$. 

Suppose we know that the signal $x$ is {\em sparse}, 
which means that only a few coordinates of $x$ are nonzero. 
As before, our task is to recover $x$ from the random linear measurements given by the vector 
$$
y = Ax,
$$
where $A$ is an $m \times n$ random matrix. This is a basic example of {\em sparse recovery problems},
which are ubiquitous in various disciplines. 

The number of nonzero coefficients of a vector $x \in \R^n$, or the sparsity of $x$, is often denoted $\|x\|_0$. 
This is similar to the notation for the $\ell_p$ norm $\|x\|_p = (\sum_{i=1}^n |x_i|^p)^{1/p}$, 
and for a reason. You can quickly check that
\begin{equation}         \label{eq: L0 limit}
\|x\|_0 = \lim_{p \to 0} \|x\|_p 
\end{equation}
(Do this!) Keep in mind that neither $\|x\|_0$ nor $\|x\|_p$ for $0<p<1$ are actually {\em norms} on $\R^n$, 
since they fail triangle inequality. (Give an example.)

Let us go back to the sparse recovery problem. Our first attempt to recover $x$ is to try the following
optimization problem:
\begin{equation}         \label{eq: L0 optimization}
\min \|x'\|_0 \quad \text{subject to} \quad y = Ax'.
\end{equation}
This is sensible because this program selects the sparsest feasible solution. 
But there is an implementation caveat: the function $f(x) = \|x\|_0$ 
is highly non-convex and even discontinuous. There is simply no known algorithm to solve 
the optimization problem \eqref{eq: L0 optimization} efficiently. 

To overcome this difficulty, let us turn to the relation \eqref{eq: L0 limit} for an inspiration. 
What if we replace $\|x\|_0$ in the optimization problem \eqref{eq: L0 optimization}
by $\|x\|_p$ with $p>0$? The smallest $p$ for which $f(x) = \|x\|_p$ is a genuine norm (and thus 
a convex function on $\R^n$) is $p=1$. So let us try 
\begin{equation}         \label{eq: L1 optimization}
\min \|x'\|_1 \quad \text{subject to} \quad y = Ax'.
\end{equation}
This is a {\em convexification} of the non-convex program \eqref{eq: L0 optimization}, 
and a variety of numeric convex optimization methods are available to solve it efficiently. 

We will now show that $\ell_1$ minimization works nicely for sparse recovery. 
As before, we assume that $A$ is a random matrix as in Theorem~\ref{thm: matrix deviation}.

\begin{theorem}[Sparse recovery by optimization]			\label{thm: sparse recovery}
  Assume that an unknown vector $x \in \R^n$ has at most $s$ non-zero coordinates, 
  i.e. $\|x\|_0 \le s$.
  The solution $\hat{x}$ of the optimization program \eqref{eq: L1 optimization} satisfies
  $$
  \E \|\hat{x}-x\|_2 \lesssim \sqrt{\frac{s \log n}{m}} \; \|x\|_2.
  $$
\end{theorem}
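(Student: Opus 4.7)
The plan is to derive this as a corollary of Theorem~\ref{thm: recovery} by choosing the right star-shaped set $K$. The $\ell_1$-minimization program \eqref{eq: L1 optimization} is exactly the Minkowski-functional program \eqref{eq: opt} for $K$ equal to any positive scalar multiple of the unit $\ell_1$-ball $B_1^n$, since $\|x'\|_{tB_1^n} = \|x'\|_1/t$, and scaling the objective by a positive constant does not change the minimizer. So the algorithm itself does not need to know $K$; we are free to pick the scale of $K$ purely for the analysis.

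The natural choice is $K := \sqrt{s}\,\|x\|_2 \cdot B_1^n$. First I would verify that $x \in K$: since $x$ has at most $s$ nonzero coordinates, Cauchy--Schwarz on the support $S$ gives
\[
\|x\|_1 \;=\; \sum_{i \in S} |x_i| \;\le\; \sqrt{|S|}\,\Big(\sum_{i \in S} x_i^2\Big)^{1/2} \;\le\; \sqrt{s}\,\|x\|_2,
\]
so indeed $x \in K$. Moreover $K$ is convex, symmetric, and star-shaped, so Theorem~\ref{thm: recovery} applies and yields
\[
\E \|\hat{x}-x\|_2 \;\lesssim\; \frac{w(K)}{\sqrt{m}}.
\]

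The final step is to compute $w(K)$. By homogeneity of Gaussian width, $w(K) = \sqrt{s}\,\|x\|_2 \cdot w(B_1^n)$, and the bound $\gamma(B_1^n) \sim \sqrt{\log n}$ from \eqref{eq: width L2 L1} (which controls $w(B_1^n)$ up to a constant, using \eqref{eq: width vs complexity} together with the symmetry $B_1^n = -B_1^n$) gives $w(B_1^n) \lesssim \sqrt{\log n}$. Substituting,
\[
\E\|\hat{x}-x\|_2 \;\lesssim\; \frac{\sqrt{s}\,\|x\|_2 \cdot \sqrt{\log n}}{\sqrt{m}} \;=\; \sqrt{\frac{s\log n}{m}}\,\|x\|_2,
\]
which is exactly the claim.

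There is no real obstacle here — the only mildly delicate point is recognizing that, although the theorem is stated for a fixed set $K$ known a priori, we may calibrate its scale to the unknown quantity $\|x\|_2$ because this rescaling does not affect the $\ell_1$ minimizer, only the analysis. Everything else is bookkeeping: a one-line Cauchy--Schwarz to enclose $x$ in $K$, and the known estimate $w(B_1^n) \lesssim \sqrt{\log n}$.
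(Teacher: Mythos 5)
Your proof is correct and follows essentially the same route as the paper: enclose $x$ in $K = \sqrt{s}\,\|x\|_2 \cdot B_1^n$ via Cauchy--Schwarz, apply Theorem~\ref{thm: recovery}, and bound $w(B_1^n) \lesssim \sqrt{\log n}$. Your explicit observation that the $\ell_1$-program is scale-invariant in $K$ (so one may calibrate $K$ to the unknown $\|x\|_2$ without changing the algorithm) is a genuinely useful clarification of a point the paper leaves implicit.
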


\begin{proof}
Since $\|x\|_0 \le s$, Cauchy-Schwarz inequality shows that 
\begin{equation}         \label{eq: L1}
\|x\|_1 \le \sqrt{s} \, \|x\|_2.
\end{equation}
(Check!)
Denote the unit ball of the $\ell_1$ norm in $\R^n$ by $B_1^n$, i.e. 
$B_1^n := \{ x \in \R^n :\; \|x\|_1 \le 1\}$. Then we can rewrite \eqref{eq: L1} as the inclusion
$$
x \in \sqrt{s} \, \|x\|_2 \cdot B_1^n := K.
$$
Apply Theorem~\ref{thm: recovery} for this set $K$. We noted the Gaussian width of $B_1^n$ 
in \eqref{eq: width L2 L1}, so
$$
w(K) = \sqrt{s} \, \|x\|_2 \cdot w(B_1^n) 
\le \sqrt{s} \, \|x\|_2 \cdot \gamma(B_1^n) 
\le  \sqrt{s} \, \|x\|_2 \cdot \sqrt{\log n}.
$$
Substitute this in Theorem~\ref{thm: recovery} and complete the proof. 
\end{proof}

Theorem~\ref{thm: sparse recovery} says that an $s$-sparse signal $x \in \R^n$
can be efficiently recovered from
$$
m \sim s \log n
$$
random linear measurements.

\subsection{Notes}

For a more thorough introduction to Gaussian width and its role in high-dimensional estimation, 
refer to the tutorial \cite{V-estimation-tutorial} and the textbook \cite{V-textbook};
see also \cite{ALMT}. Related to Gaussian complexity is the notion of {\em Rademacher complexity} of $T$,
obtained by replacing the coordinates of $g$ by independent Rademacher (i.e. $\pm 1$ symmetric) random variables.
Rademacher complexity of classes of functions plays an important role in statistical learning theory,
see e.g. \cite{Mendelson} 

Matrix deviation inequality (Theorem~\ref{thm: matrix deviation}) is borrowed from \cite{LMPV}. 
In the special case where $A$ is a Gaussian random matrix, this result follows from the work of 
G.~Schechtman \cite{Schechtman} and could be traced back to results of Gordon \cite{Gordon85, Gordon87, Gordon88, Gordon92}.

In the general case of sub-gaussian distributions, 
earlier variants of Theorem~\ref{thm: matrix deviation} were proved by B.~Klartag and S.~Mendelson 
\cite{Klartag-Mendelson}, S.~Mendelson, A.~Pajor and N.~Tomczak-Jaegermann \cite{MPT}
and S.~Dirksen \cite{Dirksen}.

Theorem~\ref{thm: covariance estimation sub-gaussian} for covariance estimation 
can be proved alternatively using more elementary tools (Bernstein's inequality and $\e$-nets), see \cite{V-RMT-tutorial}. 
However, no known elementary approach exists for the {\em low-rank} covariance estimation
discussed in Remark~\ref{rem: low-dimensional}. The bound \eqref{eq: low-dimensional} 
was proved by V. Koltchinskii and K. Lounici \cite{KL} by a different method. 

In Section~\ref{s: underdetermined}, we scratched the surface of a recently developed 
area of {\em sparse signal recovery}, which is also called {\em compressed sensing}. 
Our presentation there essentially follows the tutorial \cite{V-estimation-tutorial}. 
Theorem~\ref{thm: sparse recovery} can be improved: if we take 
$$
m \gtrsim s \log (n/s)
$$
measurements, then with high probability 
the optimization program \eqref{eq: L1 optimization} recovers the unknown signal $x$ {\em exactly}, i.e. 
$$
\hat{x} = x.
$$
First results of this kind were proved by J.~Romberg, E.~Candes and T.~Tao \cite{CRT}
and a great number of further developments followed; refer e.g. to the book \cite{Foucart-Rauhut}
and the chapter in \cite{DDEK} for an introduction into this research area.

\section*{Acknowledgement}
I am grateful to the referees who made a number of useful suggestions, which led to better presentation of the material in this chapter.

%
%
%
%
%
%
%

\bibspread

\begin{bibdiv}
	\begin{biblist}

\bibitem{Abbe-Bandeira-Hall} E. Abbe, A. S. Bandeira, G. Hall. 
{\em Exact recovery in the stochastic block model,} 
IEEE Transactions on Information Theory 62 (2016), 471--487.

\bibitem{Achlioptas} D. Achlioptas, 
{\em Database-friendly random projections: Johnson-Lindenstrauss with binary coins}, 
Journal of Computer and System Sciences, 66 (2003), 671--687.

\bibitem{Ahlswede-Winter} R. Ahlswede, A. Winter, 
{\em Strong converse for identification via quantum channels}, IEEE Trans. Inf. Theory 48 (2002), 569--579.

\bibitem{Ailon-Chazelle} N. Ailon, B. Chazelle, 
{\em Approximate nearest neighbors and the fast Johnson-Lindenstrauss transform},  
Proceedings of the 38th Annual ACM Symposium on Theory of Computing. New York: ACM Press, 2006. pp. 557--563. 

\bibitem{ALMT} D. Amelunxen, M. Lotz, M. McCoy, J. Tropp,
{\em Living on the edge: phase transitions in convex programs with random data,}
Inf. Inference 3 (2014), 224--294. 

\bibitem{Bandeira-van-Handel} A. Bandeira, R. van Handel,
{\em Sharp nonasymptotic bounds on the norm of random matrices with independent entries},
Ann. Probab. 44 (2016), 2479--2506. 

\bibitem{BDDW} R. Baraniuk, M. Davenport, R. DeVore, M. Wakin, 
{\em A simple proof of the restricted isometry property for random matrices,} 
Constructive Approximation, 28 (2008), 253--263.

\bibitem{Bhatia} R. Bhatia, 
{\em Matrix Analysis.} 
Graduate Texts in Mathematics, vol. 169. Springer, Berlin, 1997.

\bibitem{Bordenave-LM} C. Bordenave, M. Lelarge, L. Massoulie,
{|em Non-backtracking spectrum of random graphs: community detection and non-regular Ramanujan graphs,} 
Annals of Probability, to appear.

\bibitem{BLM} S. Boucheron, G. Lugosi, P. Massart, 
{\em Concentration inequalities. A nonasymptotic theory of independence.} 
With a foreword by Michel Ledoux. Oxford University Press, Oxford, 2013.

\bibitem{BBM} O. Bousquet1, S. Boucheron, G. Lugosi, 
{\em Introduction to statistical learning theory,} 
in: Advanced Lectures on Machine Learning, 
Lecture Notes in Computer Science 3176, pp.169--207, Springer Verlag 2004.

\bibitem{Cai-Zhao-Zhou} T. Cai, R. Zhao, H. Zhou, 
{\em Estimating structured high-dimensional covariance and precision matrices: optimal rates and adaptive estimation,}
Electron. J. Stat. 10 (2016), 1--59.

\bibitem{CRT}  E. Candes, J. Romberg, T. Tao, 
{\em Robust uncertainty principles: exact signal reconstruction from highly incomplete frequency information,} 
IEEE Trans. Inform. Theory 52 (2006), 489--509. 

\bibitem{Candes-Recht} E. Candes, B. Recht, 
{\em Exact Matrix Completion via Convex Optimization,}
Foundations of Computational Mathematics  9 (2009), 717--772.

\bibitem{Candes-Tao-completion} E. Candes, T. Tao, 
{\em The power of convex relaxation: near-optimal matrix completion,} 
IEEE Trans. Inform. Theory 56 (2010), 2053--2080.

\bibitem{Chen-Gittens-Tropp} R. Chen, A. Gittens, J. Tropp, 
{\em The masked sample covariance estimator: an analysis using matrix concentration inequalities,}
Inf. Inference 1 (2012), 2--20. 

\bibitem{Chin-Rao-Vu} P. Chin, A. Rao, and V. Vu, 
{|em Stochastic block model and community detection in the sparse graphs: A spectral algorithm with optimal rate of recovery},
preprint, 2015.

\bibitem{DPVW} M. Davenport, Y. Plan, E. van den Berg, M. Wootters, 
{\em 1-bit matrix completion,} Inf. Inference 3 (2014), 189--223. 

\bibitem{DDEK} M. Davenport, M. Duarte, Yonina C. Eldar, Gitta Kutyniok,
{\em Introduction to compressed sensing,}
in: {\em Compressed sensing}. Edited by Yonina C. Eldar and Gitta Kutyniok. 
Cambridge University Press, Cambridge, 2012.

\bibitem{Dirksen} S. Dirksen, 
{\em Tail bounds via generic chaining,} 
Electron. J. Probab. 20 (2015), 1--29.

\bibitem{Feige-Ofek} U.~Feige, E.~Ofek, 
{\em Spectral techniques applied to sparse random graphs,} 
Random Structures Algorithms 27 (2005),  251--275.

\bibitem{Fortunato-Hric} S. Fortunato, Santo; D. Hric, 
{\em Community detection in networks: A user guide.} 
Phys. Rep. 659 (2016), 1--44.

\bibitem{Foucart-Rauhut}  S. Foucart, H. Rauhut, 
{\em A mathematical introduction to compressive sensing.} 
Applied and Numerical Harmonic Analysis. Birkh\"auser/Springer, New York, 2013.

\bibitem{Gordon85} Y. Gordon, 
{\em Some inequalities for Gaussian processes and applications,} 
Israel J. Math. 50 (1985), 265--289.

\bibitem{Gordon87} Y. Gordon, 
{\em Elliptically contoured distributions,} 
Prob. Th. Rel. Fields 76 (1987), 429--438.

\bibitem{Gordon88} Y. Gordon, 
{\em On Milman's inequality and random subspaces which escape through a mesh in $\R^n$,}
Geometric aspects of functional analysis (1986/87), Lecture Notes in Math., vol. 1317, pp. 84--106.

\bibitem{Gordon92} Y. Gordon, 
{\em Majorization of Gaussian processes and geometric applications,} 
Prob. Th. Rel. Fields 91 (1992), 251--267.

\bibitem{Gross} D. Gross, 
{\em Recovering low-rank matrices from few coefficients in any basis,} 
IEEE Trans. Inform. Theory 57 (2011), 1548--1566.

\bibitem{Guedon-V} O. Guedon, R. Vershynin, 
{\em Community detection in sparse networks via Grothendieck's inequality,} 
Probability Theory and Related Fields 165 (2016), 1025--1049.

\bibitem{JMR} A. Javanmard, A. Montanari, F. Ricci-Tersenghi, 
{\em Phase transitions in semidefinite relaxations,}
PNAS, April 19, 2016, vol. 113, no.16, E2218--E2223.

\bibitem{JL} W. B. Johnson, J. Lindenstrauss,
{\em Extensions of Lipschitz mappings into a Hilbert space}. In Beals, Richard; Beck, Anatole; Bellow, Alexandra; et al. 
Conference in modern analysis and probability (New Haven, Conn., 1982). 
Contemporary Mathematics. 26. Providence, RI: American Mathematical Society, 1984.  pp. 189--206. 

\bibitem{Hajek-Wu-Xu} B. Hajek, Y. Wu, J. Xu, 
{\em Achieving exact cluster recovery threshold via semidefinite programming,} 
IEEE Transactions on Information Theory 62 (2016), 2788--2797.

\bibitem{HLL} P. W. Holland, K. B. Laskey, S. Leinhardt, 
{|em Stochastic blockmodels: first steps,}
Social Networks 5 (1983), 109--137.

\bibitem{Kane-Nelson} D. Kane, J. Nelson, 
{\em Sparser Johnson-Lindenstrauss Transforms}, 
Journal of the ACM 61 (2014): 1. 

\bibitem{Klartag-Mendelson} B. Klartag, S. Mendelson, 
{\em Empirical processes and random projections,} 
J. Funct. Anal. 225 (2005), 229--245.

\bibitem{KL} V. Koltchinskii, K. Lounici, 
{\em Concentration inequalities and moment bounds for sample covariance operators,} 
Bernoulli 23 (2017), 110--133.

\bibitem{LLV} C. Le, E. Levina, R. Vershynin, 
{\em Concentration and regularization of random graphs,} 
Random Structures and Algorithms, to appear.

\bibitem{Ledoux} M. Ledoux,
{\em The concentration of measure phenomenon.}
American Mathematical Society, Providence, RI, 2001. 

\bibitem{Ledoux-Talagrand} M. Ledoux, M. Talagrand, 
{\em Probability in Banach spaces. Isoperimetry and processes.} 
Springer-Verlag, Berlin, 1991.

\bibitem{Levina-V} E. Levina, R. Vershynin, 
{\em Partial estimation of covariance matrices,} 
Probability Theory and Related Fields 153 (2012), 405--419.

\bibitem{LMPV} C. Liaw, A. Mehrabian, Y. Plan, R. Vershynin, 
{\em A simple tool for bounding the deviation of random matrices on geometric sets,} 
Geometric Aspects of Functional Analysis, Lecture Notes in Mathematics, Springer, Berlin, to appear.

\bibitem{Matousek} J. Matou¨ek, 
{\em Lectures on discrete geometry.}
Graduate Texts in Mathematics, 212. Springer-Verlag, New York, 2002. 

\bibitem{McSherry} F. McSherry,  
{\em Spectral partitioning of random graphs}, 
Proc. 42nd FOCS (2001), 529--537.

\bibitem{Mendelson} S.Mendelson,S.Mendelson,
{\em A few notes on statistical learning theory,}
in: Advanced Lectures on Machine Learning, 
S. Mendelson, A.J. Smola (Eds.) LNAI 2600, pp. 1--40, 2003.

\bibitem{MPT} S. Mendelson, A. Pajor, N. Tomczak-Jaegermann, 
{\em Reconstruction and subgaussian operators in asymptotic geometric analysis.} 
Geom. Funct. Anal. 17 (2007), 1248--1282.

\bibitem{Mossel-Neeman-Sly} E. Mossel, J. Neeman, A. Sly, 
{\em Belief propagation, robust reconstruction and optimal recovery of block models.} 
Ann. Appl. Probab. 26 (2016), 2211--2256.

\bibitem{Newman} M. E. Newman, 
{\em Networks. An introduction.} 
Oxford University Press, Oxford, 2010.

\bibitem{Oliveira1} R. I. Oliveira, 
{\em Concentration of the adjacency matrix and of the Laplacian in random graphs with independent edges},
unpublished (2010), arXiv:0911.0600.

\bibitem{Oliveira2} R. I. Oliveira, 
{\em Sums of random Hermitian matrices and an inequality by Rudelson}, 
Electron. Commun. Probab. 15 (2010), 203--212.

\bibitem{PVY} Y. Plan, R. Vershynin, E. Yudovina, 
{\em High-dimensional estimation with geometric constraints,} 
Information and Inference 0 (2016), 1--40.

\bibitem{RS} S. Riemer, C. Sch\"utt,
{\em On the expectation of the norm of random matrices with non-identically distributed entries,}
Electron. J. Probab. 18 (2013), no. 29, 13 pp.

\bibitem{Tropp} J. Tropp, 
{\em User-friendly tail bounds for sums of random matrices.}
Found. Comput. Math. 12 (2012), 389--434. 

\bibitem{Tropp-book} J. Tropp, 
{\em An introduction to matrix concentration inequalities.}
Found. Trends Mach. Learning 8 (2015), 10-230.

\bibitem{van-Handel-survey} R. van Handel, 
{\em Structured random matrices.}
in: IMA Volume ``Discrete Structures: Analysis and Applications'', Springer, to appear. 

\bibitem{van-Handel} R. van Handel, 
{\em On the spectral norm of Gaussian random matrices,}
Trans. Amer. Math. Soc., to appear. 

\bibitem{Recht} B. Recht,
{\em A simpler approach to matrix completion,} 
J. Mach. Learn. Res. 12 (2011), 3413--3430. 

\bibitem{Schechtman} G. Schechtman,
{\em Two observations regarding embedding subsets of Euclidean spaces in normed spaces}, 
Adv. Math. 200 (2006), 125--135.

\bibitem{V-RMT-tutorial} R. Vershynin, 
{\em Introduction to the non-asymptotic analysis of random matrices.} 
Compressed sensing, 210--268, Cambridge University Press, Cambridge, 2012.

\bibitem{V-estimation-tutorial} R. Vershynin, 
{\em Estimation in high dimensions: a geometric perspective.} 
Sampling Theory, a Renaissance, 3--66, Birkhauser Basel, 2015.

\bibitem{V-textbook} R. Vershynin, 
{\em High-Dimensional Probability. An Introduction with Applications in Data Science.} 
Cambridge University Press, to appear. 

\bibitem{Zhou-Zhang} H. Zhou, A. Zhang, 
{\em Minimax Rates of Community Detection in Stochastic Block Models,} 
Annals of Statistics, to appear.

	\end{biblist}
\end{bibdiv}

\end{document}